\documentclass[a4paper,reqno,11pt]{amsart}

%\usepackage[margin=3.5cm]{geometry}

%%%%%% CHANGE SECTION TITLES OF AMSART %%%%%%
\usepackage{etoolbox}
\patchcmd{\section}{\scshape}{\bfseries\large}{}{}
\patchcmd{\subsubsection}{\itshape}{}{}{}
% Modify display of numbers in section-level headers
% (method proposed in "The LaTeX Companion", 2nd ed.)
\makeatletter
\def\@seccntformat#1{\csname the#1\endcsname.\space}
\makeatother

\usepackage{amsmath, amsfonts, amssymb, amsthm, amscd}
\usepackage{graphicx}
\usepackage{psfrag}
\usepackage{perpage}
\usepackage{url}
\usepackage{color}
\usepackage{mathrsfs}
\usepackage{mathabx}
\usepackage{scalerel}
\usepackage{enumerate}
\usepackage{comment}
\usepackage[normalem]{ulem}
%%%%%%%%%%%    for TIKZ
\usepackage{tikz}\usepackage{caption,subcaption}
\usetikzlibrary{arrows,decorations.pathmorphing,backgrounds,positioning,fit,petri}
\usepackage{tikz-cd} % Note taking
\usepackage{subdepth}

\usepackage{dsfont} % nice indicator function symbol (blackboard 1)

\usepackage[utf8]{inputenc}
\usepackage[T1]{fontenc}
\usepackage{microtype}

\usepackage[a4paper,scale={0.72,0.74},marginratio={1:1},footskip=7mm,headsep=10mm]{geometry}

\usepackage{hyperref}

%\usepackage[backend=biber, maxnames=10, style=numeric ]{biblatex}
%\addbibresource{mybib.bib}
%\allowdisplaybreaks
\allowdisplaybreaks[0] % disable align page breaks

%%%%%%%%%%%%%%%%%%%%%%%%%%%%%%%%%%%%%%%%%%%%%%%%%%%%%%%%%%%%%%%
%%%%%%%%%%%%%%%%%%%%%%%%%%%%%%%%%%%%%%%%%%%%%%%%%%%%%%%%%%%%%%%
%%%%%%%%%%%%%%%%%%%%%%%%%%%%%%%%%%%%%%%%%%%%%%%%%%%%%%%%%%%%%%%

\setcounter{secnumdepth}{2}

\frenchspacing

\numberwithin{equation}{section}

%%%%%%%%%%%%%%%%%%%%%%%%%%%%%%%%%%%%%%%%%%%%%%%%%%%%%%%%%%%%%%%
%%%%%%%%%%%%% CUSTOMIZED THEOREM ENVIRONMENTS %%%%%%%%%%%%%%%%%%%%
%%%%%%%%%%%%%%%%%%%%%%%%%%%%%%%%%%%%%%%%%%%%%%%%%%%%%%%%%%%%%%%

\newtheorem{theorem}{Theorem}[section]
\newtheorem{lemma}[theorem]{Lemma}
\newtheorem{proposition}[theorem]{Proposition}

\newtheorem{conjecture}[theorem]{Conjecture}

\theoremstyle{definition}
\newtheorem{definition}[theorem]{Definition}
\newtheorem{question}{Question}
\newtheorem{remark}[theorem]{Remark}

%%%%%%%%%%%%%%%%%%%%%%%%%%%%%%%%%%%%%%%%%%%%%%%%%%%%%%%%%%%%%%%%%%%%%%%%%%%%%%
%%%%%%%%%%%%%%%%%%%%%%%%%% Blackboard bolds %%%%%%%%%%%%%%%%%%%%%%%%%%%%%%%%%%
%%%%%%%%%%%%%%%%%%%%%%%%%%%%%%%%%%%%%%%%%%%%%%%%%%%%%%%%%%%%%%%%%%%%%%%%%%%%%%

\newcommand{\E}{\mathbb{E}}

\renewcommand{\P}{\mathbb{P}}

\newcommand{\R}{\mathbb{R}}
\newcommand{\Z}{\mathbb{Z}}
\newcommand{\T}{\mathbb{T}}

%%%%%%%%%%%%%%%%%%%%%%%%%%%%%%%%%%%%%%%%%%%%%%%%%%%%%%%%%%%%%%%%%%%%%%%%%%%%%%
%%%%%%%%%%%%%%%%%%%%%%%%%% Boldscript fonts %%%%%%%%%%%%%%%%%%%%%%%%%%%%%%%%%%
%%%%%%%%%%%%%%%%%%%%%%%%%%%%%%%%%%%%%%%%%%%%%%%%%%%%%%%%%%%%%%%%%%%%%%%%%%%%%%

\def\bs{\boldsymbol}

\newcommand\bP{\ensuremath{\bs{\mathrm{P}}}}
\newcommand\bE{\ensuremath{\bs{\mathrm{E}}}}

%%%%%%%%%%%%%%%%%%%%%%%%%%%%%%%%%%%%%%%%%%%%%%%%%%%%%%%%%%%%%%%%%%%%%%%%%%%%%%
%%%%%%%%%%%%%%%%%%%%%%%%%%%%% Calligrafic %%%%%%%%%%%%%%%%%%%%%%%%%%%%%%%%%%%%
%%%%%%%%%%%%%%%%%%%%%%%%%%%%%%%%%%%%%%%%%%%%%%%%%%%%%%%%%%%%%%%%%%%%%%%%%%%%%%

\newcommand{\cC}{{\ensuremath{\mathcal C}} }

\newcommand{\cT}{{\ensuremath{\mathcal T}} }

%%%%%%%%%%%%%%%%%%%%%%%%%%%%%%%%%%%%%%%%%%%%%%%%%%%%%%%%%%%%%%%%%%%%%%%%%%%%%%
%%%%%%%%%%%%%%%%%%%%%%%%%%%%% New Commands %%%%%%%%%%%%%%%%%%%%%%%%%%%%%%%%%%%%
%%%%%%%%%%%%%%%%%%%%%%%%%%%%%%%%%%%%%%%%%%%%%%%%%%%%%%%%%%%%%%%%%%%%%%%%%%%%%%

\newcommand{\be}{\begin{equation}}
\newcommand{\ee}{\end{equation}}

\definecolor{darkorange}{RGB}{255, 100, 0}

\newcommand{\ClCp}[2]{ \cT_{\cC_{#1}(#2)}} % \overline{\mathcal{C}_{#1}(#2)}}
\newcommand{\Conep}{\mathcal{C}_{1}(p)}
\newcommand{\bigO}[1]{O\Big(#1\Big)}

\newcommand{\e}{{\rm e}}
\newcommand{\weight}{\textup{\textrm{w}}}

\newcommand{\eps}{\varepsilon}
\renewcommand{\eps}{\varepsilon}
\renewcommand{\theta}{\vartheta}
\renewcommand{\rho}{\varrho}

\newcommand{\effR}[3]{{\rm R}_{\rm eff}^{#1}(#2 \leftrightarrow #3)} % Effective resistance
\newcommand{\tmix}{\ensuremath{t_{\text{\normalfont mix}}}}

% xleftrightarrow
\makeatletter
\newcommand\xleftrightarrow[2][]{%
  \ext@arrow 9999{\longleftrightarrowfill@}{#1}{#2}}
\newcommand\longleftrightarrowfill@{%
  \arrowfill@\leftarrow\relbar\rightarrow}
\makeatother

%%%%%%%%%%% For writing draft, remove later %%%%%%%%%%%%

\usepackage{marginnote}
\setlength\marginparwidth{2cm}

\newcounter{Chapcounter}

\newcommand{\chapter}[1]
{ {\centering
  \addtocounter{Chapcounter}{1} \Large \underline{\textbf{ \color{blue} Chapter \theChapcounter: ~#1}} }
  \addcontentsline{toc}{section}{ \color{blue} Chapter:~\theChapcounter~~ #1}
}

%%%%%%%%%%%%%%%%%%%%%%%%%%%%%%%%%%%%%%%%%%%%%%%%%%%%%%%%%%%%%%
%%%%%%%%%%%%%%%%%%%%%%%%% FOOTNOTES %%%%%%%%%%%%%%%%%%%%%%%%%%
%%%%%%%%%%%%%%%%%%%%%%%%%%%%%%%%%%%%%%%%%%%%%%%%%%%%%%%%%%%%%%

 % prints footnotes markers as symbols
\MakePerPage[2]{footnote} % restarts footnote counter at every new page

%%%%%%%%%%%%%%%%%%%%%%%%%%%%%%%%%%%%%%%%%%%%%%%%%%%%%%%%%%%%%%%%%%%%%%%%%%%%%%
%%%%%%%%%%%%%%%  Title, author, affiliation, date %%%%%%%%%%%%%%%%%%%%%%%%%%%%
%%%%%%%%%%%%%%%%%%%%%%%%%%%%%%%%%%%%%%%%%%%%%%%%%%%%%%%%%%%%%%%%%%%%%%%%%%%%%%

\date{\today}

\title[Random Spanning Trees in Random Environment]{Random Spanning Trees in Random Environment}

\begin{document}

\author[L. Makowiec]{Luca Makowiec}
\address{Department of Mathematics\\
National University of Singapore\\
10 Lower Kent Ridge Road, 119076 Singapore
}
\email{makowiec@u.nus.edu}

\author[M. Salvi]{Michele Salvi}
\address{
Department of Mathematics\\
University of Rome Tor Vergata\\
Via della Ricerca Scientifica 1, 00133 Rome, Italy
}
\email{salvi@mat.uniroma2.it}

\author[R. Sun]{Rongfeng Sun}
\address{Department of Mathematics\\
National University of Singapore\\
10 Lower Kent Ridge Road, 119076 Singapore
}
\email{matsr@nus.edu.sg}

\keywords{disordered system, minimum spanning tree, random graphs, uniform spanning tree}
\subjclass[2020]{Primary: 60K35;  Secondary: 82B41, 82B44, 05C05}

\begin{abstract}
We introduce a new spanning tree model called the \textit{random spanning tree in random environment} (RSTRE), which interpolates between the uniform spanning tree and the minimum spanning tree as the inverse temperature (disorder strength) $\beta$ varies. On the complete graph with $n$ vertices and i.i.d.\ uniform disorder variables on the edges, we identify: (1) a low disorder regime with $\beta \leq C n/\log n$, where the diameter of the random spanning tree is typically of order $n^{1/2}$, the same as for the uniform spanning tree; (2) a high disorder regime with $\beta \geq n^{4/3} \log n$, where the diameter is typically of order $n^{1/3}$, the same as for the minimum spanning tree. We conjecture that for $\beta=n^{\alpha}$ with $\alpha \in (1, 4/3)$, the diameter is of  order $n^{\gamma+o(1)}$ for some $\gamma=\gamma(\alpha)$ strictly between $1/2$ and $1/3$.
\end{abstract}

\maketitle

\setcounter{tocdepth}{2}
\makeatletter
\def\l@subsection{\@tocline{2}{0pt}{2.5pc}{5pc}{}}
\makeatother

\tableofcontents

%%%%%%%%%%%%%%%%%%%%%%%%%%%%%%%%%%%%%%%%%%%%%%
%% Please use \tableofcontents for articles %%
%% with 50 pages and more                   %%
%%%%%%%%%%%%%%%%%%%%%%%%%%%%%%%%%%%%%%%%%%%%%%
%\tableofcontents

%%%%%%%%%%%%%%%%%%%%%%%%%%%%%%%%%%%%%%%%%%%%%%
%%%% Main text entry area:

\section{Introduction} \label{S:Intro}

%\lu?ca{At some point we should typset exponential $e$ as $\mathrm{e}$ ...?}

Let $G = (V,E)$ be a connected finite graph with vertex set $V$ and edge set $E$. Let $\omega:=(\omega_e)_{e \in E}$ be i.i.d.\ random variables assigned to the edges with common distribution $\mu$, which constitute the disorder (random environment). Probability and expectation with respect to $\omega$ will be denoted by $\mathbb{P}$ and $\E$, respectively.
A {\em spanning tree} $\cT$ of $G$ is a cycle-free connected subgraph of $G$ with the same vertex set $V$. We identify $\cT$ with its own edge set and write $\T_G$ for the set of all spanning trees on $G$. 

We introduce here a new model of random spanning trees on $G$ defined as follows. Given a realization of the random environment $\omega$, we define for each $\beta\geq 0$ a Gibbs measure on $\T_G$ by letting
\begin{equation}\label{eq:PomegaT}
	\bP^\omega_{G, \beta}(\cT) 
	:= \frac{1}{Z_{G, \beta}^{\omega}} \prod_{e \in \cT} \e^{-\beta \omega_e} = \frac{1}{Z_{G, \beta}^{\omega}} \, \e^{-\beta H(\cT, \omega)}\,,
\end{equation}
where the Hamiltonian is defined as $H(\cT, \omega):= \sum_{e\in \cT}\omega_e$ and the normalization constant $Z_{G, \beta}^{\omega}$, also called partition function, is given by
\begin{equation}\label{eq:Zomega}
	Z_{G, \beta}^{\omega} :=  \sum_{\cT \in \T_G} \e^{-\beta H(\cT, \omega)}\,.
\end{equation}
% with
% \begin{equation} \label{eq:c_beta}
	%     c_\beta(e) := \exp(\beta \omega_e).
	% \end{equation}
We call $\bP^\omega_{G, \beta}$ the law of the \textit{random spanning tree in random environment} (RSTRE),  with expectation denoted by $\bs{\mathrm{E}}^\omega_{G, \beta}$. This law may equivalently be interpreted as the {\em uniform spanning tree} (UST) measure on the weighted graph $(G,\weight)$ with edge weights (or conductances)
\begin{equation}\label{eq:we}
	\weight(e) := \weight_\beta(e):= \exp(-\beta \omega_e).
\end{equation} 
Unless specified otherwise, we will reserve the term UST for the unweighted graph (with $\weight(e)\equiv 1$), and the term weighted UST for the UST on the weighted graph $(G, \weight)$. For more background material on the UST, see e.g.\ \cite[Chapter 4]{LP16}. 

We note that when $\beta=0$, $\bP^\omega_{G, 0}$ is simply the UST measure on the unweighted graph $G$. When $\mu$ is a continuous distribution,
the measure $\bP^\omega_{G, \infty} :=\lim_{\beta\uparrow\infty} \bP^\omega_{G, \beta}$ concentrates on the a.s.\ unique $\cT \in \T_G$ that minimizes
the Hamiltonian $H(\cT, \omega)$. This tree is known as the {\em minimum spanning tree} (MST) on $G$, see e.g.\ \cite[Chapter 11]{LP16}, and it only depends on the ordering of $(\omega_e)_{e\in E}$.  As $\beta$ increases from $0$ to $\infty$, $\bP^\omega_{G, \beta}$ interpolates between the UST and MST on $G$: our main goal is to understand how this transition occurs.

\smallskip

In this work, we will mainly investigate the RSTRE on the complete graph $G=K_n = (V_n, E_n)$ with $n$ vertices and i.i.d.~disorder variables $\omega_e$ on the edges with uniform distribution $\mu$ on $[0,1]$.  
This is motivated by the fact that there is a fairly good understanding of the UST and MST on the complete graph,
see e.g.\ \cite{Ald91a, Ald91b, Ald93, PR05, Sch09, ABGM17}. We denote by $\cT^\omega_{n, \beta}$ the random tree sampled according to $\bP^{\omega}_{n, \beta_n} := \bP_{K_n, \beta}^{\omega}$ in the random environment $\omega$. When 
there is no ambiguity, we will simply denote $\cT^\omega_{n, \beta}$ by $\cT$. On $K_n$ with $n$ large, it is known that the UST has typically a diameter of order $n^{1/2}$, while the MST has a diameter of order $n^{1/3}$. As we will show later, to see the transition from the UST to MST measured through the typical diameter of $\cT=\cT^\omega_{n, \beta}$, we need to choose $\beta=\beta_n$ that grows with the number of vertices $n$.

\subsection{Main results}
Since many properties hold only asymptotically, we will need to recall some standard notation. Namely, for two functions $f,g$ we say that $f = O(g)$ (respectively $\Omega(g)$) if there exists some constants $C > 0$ and $n_0$ such that for all $n \geq n_0$ it holds that $f(n) \leq C g(n)$ (respectively $f(n) \geq C g(n)$). If both $f = O(g)$ and $g = O(f)$, then we write $f \asymp g$ or $f = \Theta(g)$. Furthermore, we will write $f \ll g$ (or $f = o(g)$) if $\lim_{n \rightarrow \infty} f(n)/g(n) = 0$. Lastly, we say that a sequence of properties $\mathcal{A}_n$ holds w.h.p.\ (with high probability) if their probability goes to $1$ as $n\to\infty$.

We will denote by $\widehat \P$ (with expectation $\widehat \E$) the joint law of $\omega$ and $\cT$ where $\omega$ has marginal law $\P$, and conditional on $\omega$, the  random spanning tree $\cT$ has law $\bP^\omega_{n, \beta_n}$. The marginal law 
$$
\widehat \P(\cT \in \cdot) = \E[ \bP^\omega_{n, \beta_n}(\cT\in \cdot)]
$$ 
is known as the \textit{averaged law}, while $\bP^\omega_{n, \beta_n}$ is known as the \textit{quenched law}.

\smallskip

Recall that the diameter of a connected graph $G=(V,E)$ is the maximal graph distance $d_G$ between any pair of its vertices, where the graph distance between two vertices is the minimal number of edges one has to cross to go from one to the other:
\begin{equation*}
	{\rm diam}(G):=\max\limits_{u,v \in V} d_G(u,v)\,.
\end{equation*}

Our main result is the following theorem, which identifies a low (resp.\ high) disorder regime of $\beta=\beta_n$, such that under the averaged law $ \widehat{\P}$, the diameter of $\cT^\omega_{n, \beta_n}$ is of the same polynomial order as the UST (resp.\ MST).

\begin{theorem} \label{T:main}
	For $\beta_n\geq 0$, let $\cT^\omega_{n, \beta_n}$ be the RSTRE on the complete graph $K_n=(V_n, E_n)$ with $n$ vertices and i.i.d.\ disorder variables $(\omega_e)_{e\in E_n}$ uniformly distributed on $[0,1]$.
	There exists a constant $C>0$ such that for any $\delta > 0$, if $\beta_n \leq C n/\log n$ and $n$ is sufficiently large, then 
	\begin{equation}\label{eq:low}
		\widehat \P \Big( C_1(\delta)^{-1} n^{1/2}\leq{\rm diam}(\cT^\omega_{n, \beta_n})\leq C_1(\delta) n^{1/2} %= \Theta_\delta\big(n^{1/2}\big)  
		\Big) \geq 1-\delta\,,
	\end{equation}
	where $C_1(\delta)>0$ is a constant  depending only on $\delta$.  
	On the other hand, if $\beta_n \geq n^{4/3} \log n$ and $n$ is sufficiently large, then 	
	\begin{equation}\label{eq:high}
		\widehat \P \Big( C_2(\delta)^{-1} n^{1/3}\leq{\rm diam}(\cT^\omega_{n, \beta_n})\leq C_2(\delta) n^{1/3}   \Big) \geq 1-\delta\,,
	\end{equation}
	where $C_2(\delta)>0$ is a constant  depending only on $\delta$.  
\end{theorem}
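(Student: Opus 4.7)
The plan is to handle the low-disorder and high-disorder regimes by different techniques, using the interpretation of $\cT^\omega_{n,\beta_n}$ as the UST on the weighted complete graph $(K_n,\weight_\beta)$ with conductances $\weight_\beta(e) = \e^{-\beta\omega_e}$. In the low-disorder regime the aim is to transfer the UST diameter asymptotics from $K_n$, and in the high-disorder regime to transfer the MST diameter asymptotics.

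For $\beta_n \le Cn/\log n$ with $C$ small enough, I would sample $\cT^\omega_{n,\beta_n}$ via Wilson's algorithm driven by the weighted random walk $P^\omega(u,v) = \weight_\beta(u,v)/\pi^\omega(u)$, where $\pi^\omega(u) := \sum_{v\ne u}\weight_\beta(u,v)$. The classical UST diameter on $K_n$ follows, in Aldous's approach, from the fact that the hitting time of any subset $S$ of size $k$ by simple random walk from a typical vertex is of order $n/k$; the goal is to prove an analogous estimate for $P^\omega$ with high $\P$-probability. This is nontrivial because for $\beta_n \asymp n/\log n$ the quantity $\pi^\omega(u)$ is only of order $\log n$ (the sum being dominated by the $O(\log n)$ cheapest incident edges), so $P^\omega$ differs substantially from SRW. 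Nevertheless, the effective graph of cheap edges should inherit expander-like behaviour, in which case hitting times of random sets of size $k$ remain $\Theta(n/k)$. Plugging this into Wilson's algorithm should yield the upper bound $C_1 n^{1/2}$ via standard covering arguments, together with a matching lower bound on the length of a typical loop-erased path, again following Aldous.

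For $\beta_n \ge n^{4/3}\log n$ the starting point is the elementary partition-function bound
\begin{equation*}
\bP^\omega_{n,\beta_n}\bigl(H(\cT,\omega) \ge H_{\min}(\omega)+\Delta\bigr) \le n^{n-2}\,\e^{-\beta_n\Delta},
\end{equation*}
where $H_{\min}(\omega)$ denotes the MST energy. Choosing $\Delta = An^{-1/3}$ with $A$ a sufficiently large constant makes the right-hand side $o(1)$ uniformly in $\omega$, so $\cT^\omega_{n,\beta_n}$ concentrates on trees whose Hamiltonian lies within $An^{-1/3}$ of the minimum. The next step is to show that any such ``near-MST'' tree inherits the diameter of the MST: edges $e$ with $\omega_e$ substantially smaller than the critical scale $1/n$ must appear in any low-energy tree (replacing such an $e$ by any non-MST edge across the induced cut would cost far more than $An^{-1/3}$), so the macroscopic skeleton of the MST -- whose diameter is $\Theta(n^{1/3})$, cf.\ \cite{ABGM17} -- is preserved, and rearrangements can occur only on edges in the critical Erd\H os--R\'enyi window $\omega_e = 1/n + O(n^{-4/3})$, which must be shown not to alter the $n^{1/3}$-scale geometry.

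The main obstacle in the low-disorder regime is the absence of a step-by-step coupling with SRW: since $P^\omega(u,v)$ can vary across $v$ by a factor $\e^{\beta_n}$, a local comparison fails and one must exploit global expander-type behaviour of the cheapest-edge subgraph to obtain hitting-time estimates. In the high-disorder regime the main difficulty is that energy gaps between nearby spanning trees can be as small as $n^{-4/3}$, so $\cT^\omega_{n,\beta_n}$ is typically \emph{not} equal to the MST; one must argue that the (numerous) permitted swaps are all confined to the critical window and cannot destroy the $n^{1/3}$ geometry inherited from the MST scaling limit.
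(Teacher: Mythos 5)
Your high-disorder argument has a genuine gap at its central step. The partition-function bound $\bP^\omega_{n,\beta_n}(H(\cT,\omega)\ge H_{\min}(\omega)+\Delta)\le n^{n-2}\e^{-\beta_n\Delta}$ is fine and does show concentration on trees within $\Delta=An^{-1/3}$ of the minimum energy when $\beta_n\ge n^{4/3}\log n$. But the claim that such near-minimizers ``inherit'' the MST geometry because edges with $\omega_e$ well below $1/n$ cannot be swapped out is false: the cost of removing a tree edge $e$ and reconnecting across the induced cut is not governed by $\omega_e$ but by the gap between $\omega_e$ and the second-cheapest weight in that cut, and for a cut with both sides macroscopic this gap is typically of order $n^{-2}$ (for a pendant edge, of order $n^{-1}$). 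Thus even the very cheapest MST edges can be exchanged at cost $n^{-2+o(1)}$, and within the budget $An^{-1/3}$ one can afford on the order of $n^{5/3}$ such swaps --- vastly more than the $n-1$ edges of the tree and in no way confined to the critical window $\omega_e=1/n+O(n^{-4/3})$; the swapped-in edges need not lie in that window either. So the ``macroscopic skeleton is preserved'' step has no mechanism behind it, and indeed it should not: in this regime the RSTRE is typically \emph{not} a small perturbation of the MST (the paper's Theorem \ref{T:SuperHighDisorder} only gives concentration on the MST for $\beta_n\gg n^5\log n$), yet the diameter is still $\Theta(n^{1/3})$. The paper never compares energies with the MST here; instead it couples $\omega$ to Erd\H{o}s--R\'enyi graphs $G_{n,p}$, shows via Kirchhoff's formula and effective-resistance bounds (Lemma \ref{L:gap}) that RSTRE paths between vertices of the same $p$-cluster stay inside slightly larger clusters, runs a multi-scale argument over $p_0<p_1<\dots<p_m$ to compare ${\rm diam}(\cT_{\cC_1(p_0)})$ with ${\rm diam}(\cT_{\cC_1(p_m)})$ (Lemma \ref{L:sizepm}), and controls the contribution of vertices outside the giant by Wilson's algorithm and a hitting estimate for the LERW (Lemmas \ref{L:fasthit}--\ref{L:fasthitLERW}); the $n^{1/3}$ order then comes from the critical-window component via Theorem \ref{T:diamCriticalp} and the excess bound, not from the MST scaling limit.

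The low-disorder part is directionally reasonable but is a plan rather than a proof: the assertion that the cheap-edge structure ``should inherit expander-like behaviour'' and that hitting times of size-$k$ sets remain $\Theta(n/k)$ is exactly the content that has to be established, and it is where the threshold $Cn/\log n$ must emerge. The paper does this by verifying the balanced/mixing/escaping conditions of Theorem \ref{T:Nach} (the weighted extension of Michaeli--Nachmias--Shalev from \cite{MSS23}) through Bernstein concentration of the vertex weights $\weight(v)$ and of cut weights, with a union bound over all $2^n$ cuts to lower bound the bottleneck ratio (Lemmas \ref{L:WBalanced}, \ref{L:CheegerConcentrate}); it is precisely this union bound against $\exp(-c\,d_{\min}/\beta_n)$ per cut that forces $\beta_n\lesssim n/\log n$. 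If you want to pursue the hitting-time route you would still need quantitative statements of this kind, so as written both halves of your proposal stop short of a proof, and the high-disorder half rests on an incorrect localization of the allowed rearrangements.
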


We call $\beta_n \leq C n/\log n$ the {\em low disorder regime} and $\beta_n \geq n^{4/3} \log n$ the {\em high disorder
	regime}. When $\beta$ does not depend on $n$, the edge weights are all within constant multiples of each other and one can use the results of 
\cite{AS24} about UST on dense graphs, combined with standard concentration arguments, to show that the properly rescaled $\cT^\omega_{n, \beta}$
converges in law to Aldous' Brownian \textit{continuum random tree} (CRT), see Proposition \ref{P:GHPbeta} below.

\smallskip

\subsection{Related literature} 
To the best of our knowledge, this is the first time the RSTRE model appears in the literature.\footnote{Shortly after we posted our paper, K\'{u}sz posted a paper \cite{K24} introducing the same model with essentially the same result as our Theorem \ref{T:main}. See the end of Section \ref{S:Intro} for more details.} Our definition of the RSTRE is inspired by the {\em directed polymer
	model in random environment} (DPRE), defined as follows. Given i.i.d.\ random variables $\omega(k, x)$ indexed by the time-space lattice points $(k, x)\in {\mathbb N}\times \Z^d$, 
the directed polymer of length $N$ at inverse temperature $\beta$ is defined via the Gibbs measure
$$
\bP^{\omega}_{N, \beta}(S) := \frac{1}{Z_{N, \beta}^{\omega}} \e^{ \beta \sum_{k=1}^N \omega(k, S_k)} \bP(S),
$$
where $S$ is a simple symmetric random walk on $\Z^d$ with law $\bP$, and $Z_{N, \beta}^\omega$ is the partition function. Similar to the RSTRE, as $\beta$
increases from $0$ to $\infty$, the DPRE interpolates between the simple random walk measure  $\bP_{N, 0}^{\omega}=\bP$ and the measure $\bP_{N, \infty}^{\omega}$, which concentrates on the random walk path $S$ that minimizes the Hamiltonian $H^\omega_N(S) = -\sum_{k=1}^N \omega(k, S_k)$. 
The latter model is known as the {\em last passage percolation model} and belongs to the celebrated KPZ universality class. For more on the DPRE, see the monograph \cite{Com17}. 

\smallskip

Our motivation for introducing the RSTRE is to identify a natural interpolation between UST and MST. In particular, we wish to study the geometry of the random spanning
tree and its scaling limit. A first step in this direction is to study the diameter of the RSTRE. The following is known for the UST. For unweighted {\em ``high-dimensional''}   graphs, such as the complete graph, finite tori of dimension $d \geq 5$, expanders and dense graphs, the diameter of the UST is typically of order $n^{1/2}$, where $n$ is the number of vertices in the graph, see \cite{Sze83, Ald90, CHL12, PR05, MNS21, ANS22}. On these graphs, in fact, it is believed that, seen as a random metric space equipped with the graph distance, the UST rescaled by ${n^{-1/2}}$ would converge in distribution to Aldous' {\em Brownian continuum random tree} (CRT) \cite{Ald91a, Ald91b, Ald93}. This was verified in the Gromov-Hausdorff topology for the complete graph in \cite{Ald91a, Ald91b, Ald93}, in the sense of finite-dimensional distributions for finite tori of dimension $d\geq 5$ and $d=4$ in \cite{PR05} and \cite{Sch09} respectively, and in the stronger Gromov-Hausdorff-Prokhorov (GHP) topology for finite tori of dimension $d\geq 5$ in \cite{ANS22} and for dense graphs in \cite{AS24}. 
A precursor to the current work is \cite{MSS23}, where we essentially studied the RSTRE with arbitrary disorder distribution $\mu$ at a fixed $\beta$ that does not vary with the graph sequence $G_n$.
%In \cite{MSS23}, we studied the RSTRE with an arbitrary disorder distribution, but at a fixed $\beta$ instead of letting $\beta$ vary with the sequence of graphs $G_n$. 
When the underlying graph is a bounded degree expander or a box of $\Z^d$ for $d \geq 5$, the diameter of the random spanning tree was shown in \cite{MSS23} to be of order $n^{1/2 + o(1)}$, falling in the ``class'' of the UST. 
%The techniques of \cite{MSS23} may be adapted to $\beta$ growing with $n$, but the bounds on the diameter incur a polynomial error as soon as $\beta$ grows at the rate of $\log n$. 

The MST has also been studied extensively in the literature, see e.g.\ \cite{LPS06, ABGM17, NTW17} and the references therein. The MST is also connected to 
invasion percolation and the ground states of a spin glass model, see e.g.\ \cite{NS96}. Many basic questions about the MST remain open, such as for which dimensions does the MST on $\Z^d$ consist of more than one connected component in the infinite volume limit. Very little is known about the large scale geometries of MSTs and
their scaling limits. One exception is the MST on the complete graph, for which it was shown in \cite{ABGM17} that scaled by a factor of $n^{-1/3}$, the MST converges (as a random metric measure space) to a continuum limit that is different from Aldous' Brownian CRT. It is conjectured that the MST on ``high dimensional'' graphs should have the same scaling limit, which has been verified for the random 3-regular graph in \cite{AS21}. Another exception is \cite{GPS18}, where the authors show that a variant of the MST on the 2-dimensional triangular lattice converges to a continuum limit that is not conformally invariant. Furthermore, the scaling limits of the MST on the giant component of supercritical inhomogeneous random graph models have been studied in \cite{BS24}. To the best of our
knowledge, no other scaling limit results are known for the MST on $d$-dimensional lattices. 

\smallskip

Another model closely related to the RSTRE is first passage percolation with (random) edge weights $\weight_\beta(e):=\exp(\beta \omega_e)$. Given $\weight_\beta$,
the distance between any two vertices $u$ and $v$ is defined by minimising
\begin{equation} \label{eq:MSTweight}
	\sum_{e \in \Gamma} \weight_\beta(e)
\end{equation}
over all paths $\Gamma$ that connect $u$ and $v$ in the graph $G$. When $\beta=0$, this is exactly the graph distance on $G$. As $\beta\uparrow \infty$, the 
geodesics between vertices converge to paths in the MST on $G$ with edge variables $\omega_e$. See \cite{BvdH12, EGHN13,EGH20} for further details.

\subsection{Other results, conjectures and open questions}
Before stating some open questions, we first state a scaling limit result for the RSTRE on the complete graph $K_n$. The only regime where we can identify a non-trivial limit is when $\beta>0$ does not change with $n$. This will follow as a consequence of \cite[Theorem 1.1]{AS24}, and we will sketch the proof in Section \ref{SS:graphon}. 
\begin{proposition}\label{P:GHPbeta}
	Fix $\beta \geq 0$ and let $\cT^\omega_{n, \beta}$ be the RSTRE on the complete graph $K_n$ as in Theorem \ref{T:main}. Denote by $d_{\cT^\omega_{n, \beta}}$ the graph distance on $\cT^\omega_{n, \beta}$ and by $\nu_n$ the uniform distribution on the vertices of $K_n$. Then, for $\P$-almost all $\omega$, we have
	\begin{equation*}
		\big( \cT^\omega_{n, \beta}, \frac{1}{\sqrt{n}} d_{\cT^\omega_{n, \beta}}, \nu_n \big) \xrightarrow{(d)} \big( \cT, d_{\cT}, \nu \big),
	\end{equation*}
	where $(\cT, d_{\cT}, \nu)$ is Aldous' Brownian CRT, as defined in \cite{AS24}, equipped with its canonical mass measure $\nu$, and the convergence is in distribution with respect to the Gromov-Hausdorff-Prokhorov distance between metric measure spaces.
\end{proposition}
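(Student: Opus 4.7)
The plan is to derive Proposition \ref{P:GHPbeta} as a direct consequence of \cite[Theorem 1.1]{AS24}, which establishes GHP convergence of the weighted UST on sequences of dense weighted graphs whose renormalised adjacency data converge in the graphon sense. The first step is to recall, as remarked after \eqref{eq:we}, that the quenched RSTRE law $\bP^\omega_{n,\beta}$ is exactly the weighted UST on $K_n$ with conductances $\weight_\beta(e) = \exp(-\beta\omega_e)$. Since $\beta$ is \emph{fixed} and $\omega_e \in [0,1]$ almost surely, these random conductances are uniformly sandwiched between $e^{-\beta}$ and $1$, independently of $n$ and $e$; in particular they live in a compact subset of $(0,\infty)$.

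The second step is to verify, for $\P$-a.a.\ $\omega$, the graphon-type hypothesis required by \cite{AS24}. Denote by $W_n:[0,1]^2\to\R_{+}$ the empirical graphon associated to $(K_n,\weight_\beta)$, renormalised by the average conductance $c_\beta^{(n)}:=\binom{n}{2}^{-1}\sum_{e\in E_n}\weight_\beta(e)$. Because the $\weight_\beta(e)$ are i.i.d.\ and uniformly bounded, Hoeffding's inequality applied to the partial sums $\sum_{e\in S}\weight_\beta(e)$ over arbitrary edge subsets $S$, together with a net argument on cut-norm witnesses and Borel--Cantelli, shows that $W_n \to W_\infty \equiv 1$ in cut norm, $\P$-a.s. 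By the same law-of-large-numbers reasoning one also has $c_\beta^{(n)}\to c_\beta := \E[e^{-\beta U}]=(1-e^{-\beta})/\beta$ (with the convention $c_0=1$), $\P$-a.s.

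The third step is to invoke \cite[Theorem 1.1]{AS24} along the realisation $\omega$: for the weighted graph sequence $(K_n,\weight_\beta)$ whose rescaled graphons converge a.s.\ to the constant graphon $1$, the weighted UST $\cT^\omega_{n,\beta}$ equipped with its graph distance rescaled by $n^{-1/2}$ (up to a multiplicative constant depending on $c_\beta$) and the uniform vertex measure $\nu_n$ converges in the GHP topology to Aldous' Brownian CRT $(\cT,d_\cT,\nu)$. The $c_\beta$-dependent constant is absorbed for free: the law of the weighted UST is invariant under a uniform multiplicative rescaling of the conductances, so only the shape of the limiting graphon (constant) matters, and the natural scale is $1/\sqrt{n}$ as stated in the proposition.

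The main technical point, and the only nontrivial obstacle, is upgrading the graphon convergence from convergence in probability (which is immediate from the mean computation) to almost-sure convergence that is uniform enough to feed into \cite{AS24}. Because the conductances are i.i.d.\ and uniformly bounded, the concentration arguments are essentially those of the classical graphon LLN for dense random graphs, and the Borel--Cantelli step is comfortable at the polynomial tails produced by Hoeffding. Once that verification is in hand, Proposition \ref{P:GHPbeta} follows with no further random-environment input, and in Section \ref{SS:graphon} it suffices to sketch this reduction and the concentration lemma.
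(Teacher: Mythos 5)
Your proof is correct and follows essentially the same route as the paper: identify the RSTRE as the weighted UST with conductances $\weight_\beta(e)=\e^{-\beta\omega_e}$ bounded in $[\e^{-\beta},1]$, prove almost-sure cut-norm convergence of the associated graphon to a positive constant via Hoeffding, a union bound over the $2^{2n}$ vertex-set witnesses, and Borel--Cantelli, and then invoke the results of \cite{AS24} (the paper uses Theorem~1.3 combined with Claim~7.4, which they note is equivalent to invoking Theorem~1.1 as you do). The only cosmetic difference is that you pre-normalise the graphon to the constant $1$ and appeal to the scale-invariance of the weighted UST law, whereas the paper works directly with the constant $\xi_\beta=(1-\e^{-\beta})/\beta$ and uses \cite[Claim 7.4]{AS24} to conclude that the scaling correction $\alpha_n\to 1$; both are valid because the UST measure and hence the resulting tree law are invariant under uniform rescaling of the conductances.
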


On the complete graph, Theorem \ref{T:main} leaves open what happens between the low
and high disorder regimes. In this intermediate regime, we make the following conjecture on the diameter of the random spanning tree.

\begin{conjecture} \label{C:Intermediate}
	Consider the same setting as in Theorem \ref{T:main}. Then, with high probability with respect to the averaged law $\widehat{\P}$, we have 
	\begin{equation*}
		{\rm diam} \big( \cT^\omega_{n, \beta_n} \big) =
		\begin{cases}
			n^{\frac{1}{2}+o(1)}, & \beta_n \leq n^{1 + o(1)}, \\
			n^{\frac{1}{2} - \frac{\gamma}{2}+o(1)}, & \beta_n  = n^{1 + \gamma+o(1)}, \quad 0 < \gamma < \frac{1}{3}, \\
			n^{\frac{1}{3}+o(1)}, & \beta_n \geq n^{\frac{4}{3} + o(1)}\,.
		\end{cases}
	\end{equation*}
\end{conjecture}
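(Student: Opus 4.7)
The plan is to make rigorous the stability heuristic already underlying the proof of the high disorder regime of Theorem~\ref{T:main}: in Kruskal's construction, an MST-edge bridging subtrees $A$ and $B$ resists Gibbs randomization precisely when $\beta\gg |A||B|$, because the typical weight gap between this edge and its $|A||B|$ alternatives is of order $1/(|A||B|)$. For $\beta_n = n^{1+\gamma+o(1)}$ with $\gamma\in(0,1/3)$, this produces a stability threshold at merges with $|A||B|\asymp n^{1+\gamma}$. Since the critical merges on $K_n$ involve components of sizes up to $n^{2/3}$, and $n^{(1+\gamma)/2} < n^{2/3}$ in this range, the threshold sits strictly between the UST and MST regimes and suggests a genuine two-scale structure.

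Accordingly, the plan is to decompose $\cT^\omega_{n,\beta_n}$ into an MST-like short scale and a UST-like long scale. First, choose a sub-critical cutoff density $p_n$ so that the random graph $G_{p_n}:=\{e:\omega_e\leq p_n\}$ has components at the stability scale; an adaptation of the high disorder argument of Theorem~\ref{T:main} should show that w.h.p.\ every edge of $\cT^\omega_{n,\beta_n}$ with $\omega_e\leq p_n$ belongs to the MST of $G_{p_n}$. Second, contract these MST-like clusters to form a weighted multigraph $G^{\mathrm{contr}}_n$ on some $N_n$ super-nodes, and show that conditionally on the cluster geometry the remaining tree edges of $\cT^\omega_{n,\beta_n}$ are distributed, up to controllable error, as a weighted UST on $G^{\mathrm{contr}}_n$. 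Third, arguing that $N_n = n^{1-\gamma+o(1)}$ and that the induced weights on $G^{\mathrm{contr}}_n$ are sufficiently uniform, apply the low disorder regime analysis of Theorem~\ref{T:main} to get a diameter of $N_n^{1/2+o(1)} = n^{(1-\gamma)/2+o(1)}$ for the contracted UST, and conclude by bounding within-cluster contributions as lower order. The matching lower bound should follow from the same decomposition combined with effective resistance estimates on $G^{\mathrm{contr}}_n$ in the spirit of \cite{ANS22, AS24}.

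The hard part will be pinning down the correct super-node scale in step two. The naive choice of $p_n$ such that the largest sub-critical component has size $\sqrt{\beta_n} = n^{(1+\gamma)/2}$ gives only $N_n = n^{(1-\gamma)/2}$ super-nodes and hence predicts a diameter of $n^{(1-\gamma)/4}$, not the conjectured $n^{(1-\gamma)/2}$. Reconciling these heuristics likely requires a more delicate analysis of the merging history during the sub-critical Kruskal process, in which the effective super-nodes reflect not only the instantaneous clusters at the cutoff but also the hierarchy of previous merges; the scaling limit theory of critical random graphs and the MST, as in \cite{ABGM17}, is a natural tool here. A further complication is that $G^{\mathrm{contr}}_n$ inherits an irregular degree structure from the random graph near criticality, so applying the dense-graph UST estimates of \cite{AS24} requires controlling that these irregularities do not spoil the $N_n^{1/2}$ diameter bound. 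Finally, matching the endpoint exponents $\gamma=0$ and $\gamma=1/3$ with Theorem~\ref{T:main} up to the $\log n$ factors in its thresholds will require extra care to ensure the logarithmic corrections do not affect the exponent of $n$.
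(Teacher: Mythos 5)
First, note that the statement you are addressing is a \emph{conjecture}: the paper does not prove it, it only offers a heuristic (Section \ref{S:Conclusion}), so at best your proposal should be judged as a competing heuristic/roadmap. As such it has a concrete gap that you yourself flag but do not resolve: your stability criterion ($\beta_n \gg |A||B|$ for a merge of clusters $A,B$) pins the cutoff clusters at size $n^{(1+\gamma)/2}$, which gives $N_n \approx n^{(1-\gamma)/2}$ super-nodes and hence a predicted diameter $n^{(1-\gamma)/4}$ --- not the conjectured $n^{(1-\gamma)/2}$. The figure $N_n = n^{1-\gamma+o(1)}$ used in your step three is asserted rather than derived and is inconsistent with the cutoff chosen in step one, so the exponent you want never actually comes out of the argument. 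A second, independent problem is the step ``apply the low disorder analysis of Theorem \ref{T:main} to $G^{\mathrm{contr}}_n$'': the surviving edges carry weights $\e^{-\beta_n\omega_e}$ with $\omega_e$ ranging over $(p_n,1]$, so weight ratios are of order $\e^{\beta_n(1-p_n)}$, astronomically far from the balanced/concentration regime that the low-disorder proof needs (it already fails for $\beta_n \gg n/\log n$, see Section \ref{SS:noconcentration}); only edges with $\omega_e$ within $O(\log n/\beta_n)$ of the bottom of the range are relevant, so the contracted graph is still governed by near-critical percolation rather than by a balanced dense UST. Finally, the claim that every tree edge with $\omega_e\le p_n$ lies in the MST of $G_{p_n}$ is stronger than what the paper's gap estimate (Lemma \ref{L:gap}) delivers --- that lemma only controls which percolation level the tree paths use, not that they coincide with MST edges --- though a local-MST statement of this flavour is supported by \cite{Mak24}.

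For contrast, the paper's heuristic runs differently and does produce the exponent. Proposition \ref{P:DiaC1} (which is rigorous) reduces ${\rm diam}(\cT)$ to ${\rm diam}(\ClCp{1}{p_0})$ with $p_0=(1+\eps)/n$ and $\eps \asymp n\log n/\beta_n \asymp n^{-\gamma+o(1)}$, i.e.\ a \emph{slightly supercritical} giant, not a subcritical contraction. Then the structure theorem of \cite{DKLP11} is invoked: $\cC_1(p_0)$ is a $3$-regular kernel $K$ on $\Theta(\eps^3 n)=n^{1-3\gamma+o(1)}$ vertices whose edges are subdivided into paths of length $\asymp \eps^{-1}=n^{\gamma+o(1)}$, plus hanging trees contributing only $O(\log n)$. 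The series law pushes the disorder onto the kernel, which is a bounded-degree expander with weights concentrated enough that the $\sqrt{|V|}$ diameter bound of \cite{MSS23} should apply, giving ${\rm diam}(\cT)\approx \eps^{-1}\sqrt{\eps^3 n}= n/\sqrt{\beta_n}=n^{(1-\gamma)/2}$. The moral is that the UST-like scale lives on the kernel of the supercritical giant (size $n^{1-3\gamma}$) and is then multiplied by the subdivision factor $n^{\gamma}$, rather than on a contracted dense graph with $n^{1-\gamma}$ nodes; this is exactly where your bookkeeping of scales goes astray, and fixing it would essentially lead you back to the paper's route.
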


\noindent We refer to Section \ref{S:Conclusion} for a heuristic argument for the above conjecture. 

\smallskip

In a follow-up paper \cite{Mak24}, the first author studied local properties of the RSTRE under the same setup. In particular, if two trees $\cT$ and $\cT'$ (identified with their edge sets) are sampled independently with law $\bP^\omega_{n, \beta_n}$, then it is shown that, with high probability, the expected edge overlap 
$ \bE^{\omega, \otimes 2}_{n, \beta_n}[| \cT \cap \cT'|]$ 
grows as $\beta_n$ for $\beta_n \ll n/\log n$, while it is equal to $(1-o(1)) n$ for $\beta_n \gg n (\log n)^2$. Furthermore,  under the averaged law $\widehat\P$, the local limit of the RSTRE is equal to that of the UST (the Poisson($1$) tree conditioned on survival) for $\beta_n \ll n/\log n$, and equal to the local limit of the MST when $\beta_n\gg n(\log n)^c$ for all $c>0$. Hence, there is a sharp cutoff in the local limit behavior around $\beta_n = n$,  in contrast to the smooth transition in the diameter suggested by Conjecture \ref{C:Intermediate}.%, which is a global observable.

\medskip

We list below a few more interesting open questions for the RSTRE. 

\begin{question}
	A more modest goal than proving Conjecture \ref{C:Intermediate} is to find a sequence $\beta_n$ such that, with high probability, 
	$$
	\frac{1}{3} < \liminf_{n\to\infty} \frac{1}{\log n} \log {\rm diam} \big( \cT^\omega_{n, \beta_n} \big) \leq \limsup_{n\to\infty} \frac{1}{ \log n} \log {\rm diam} \big( \cT^\omega_{n, \beta_n} \big) <\frac{1}{2}. 
	$$
\end{question}

\begin{question}
	What if we change the distribution $\mu$ of the disorder variables $\omega_e$? The more interesting cases would be when the law of
	$\weight(e)=\e^{-\beta\omega_e}$ is unbounded from above.
	For instance, let $\omega_e := -U^{-\gamma}_e$ for some $\gamma>0$, with $U_e$ uniformly distributed on $[0, 1]$ and fix $\beta\equiv1$. Is it true that ${\rm diam}(\cT^\omega_{n, 1})=n^{\alpha+o(1)}$ for some $\alpha=\alpha(\gamma) \in (1/3, 1/2)$? See also \cite[Remark~6.2]{MSS23}. 
\end{question}

\begin{question}
	Besides the diameter, are there other observables that can distinguish between the UST, RSTRE, and MST? One candidate is the expected edge overlap $ \bE^{\omega, \otimes 2}_{n, \beta_n}[| \cT \cap \cT'|]$ 
	%$\bE[|\cT_1 \cap \cT_2|]$ 
	between two random trees $\cT$ and $\cT'$ sampled independently according to the same Gibbs measure $\bP^\omega_{G, \beta}$ in \eqref{eq:PomegaT}. Can one discern the transition from UST to the MST (for any increasing sequence of graphs) by studying how the edge overlap varies with $\beta$? For the complete graph this was studied in \cite{Mak24}. 
\end{question}

\begin{question}
	Let $\cT$ be sampled according to the RSTRE measure $\bP^{\omega}_{n, \beta_n}$. For a typical vertex $v$, how does the volume of the metric ball $d_{\cT}(v,r)$ grow with the radius $r$? If ${\rm diam}(\cT)$ is of order $n^{\alpha+o(1)}$, then it is natural to conjecture that for $r \asymp {\rm diam}(\cT)$ we have that $d_{\cT}(v,r)$ is of order $r^{1/\alpha+o(1)}$, which suggests that any scaling limit of $\cT$ would have fractal dimension $1/\alpha$. For $r\ll {\rm diam}(\cT)$, this volume behavior might be different, see also \cite{Mak24}.
\end{question}

\begin{question}
	In Theorem \ref{T:main}, what is the scaling limit of the RSTRE in the low (resp.\ high) disorder regime? Is the scaling limit the same as that of the UST (resp.\ MST)?
\end{question}

\begin{question}
	Order all the possible spanning trees $T_1, T_2, \ldots$ of $K_n$ in increasing order of their Hamiltonian $H(T, \omega) = \sum_{e\in T} \omega_e$. Does the empirical distribution of $H(T_1, \omega), H(T_2, \omega), \ldots$ have scaling limits similar in spirit to the eigenvalues of a random matrix ensemble? How does ${\rm diam}(T_k)$ depend on $k$? For which choices of $k$ do we see a diameter of the order $n^\alpha$ with $\alpha\in (1/3, 1/2)$? What is the threshold $k_0=k_0(\beta_n)$ such that $\sum_{i=1}^{k_0} \bP^{\omega}_{n, \beta_n}(\cT=T_i)\geq 1/2$?
\end{question}

\begin{question}
	We could ask the same questions as above for the RSTRE on other graphs, in particular, for the finite torus on $\Z^d$. When $d\geq 5$, it is not difficult to identify a low disorder regime as in Theorem \ref{T:main}. Any understanding beyond such a low disorder regime could potentially help us understand better the MST on the torus. 
\end{question}

\subsection{Proof strategy and structure of the paper}
After recalling some basic tools and definitions in Section \ref{S:Notation},
we split the proof of Theorem \ref{T:main} into two parts, the low and high disorder regimes, which will be treated in Sections \ref{S:Low}  and \ref{S:High} respectively. The proof strategies are sketched below.  In Section \ref{S:Conclusion}, we will give a heuristic for Conjecture \ref{C:Intermediate}. 

\medskip

{\bf Low disorder regime:} 
In Section \ref{S:Low} we provide a more general result, see Theorem \ref{T:GeneralLow}, that implies the first part of Theorem \ref{T:main}. Indeed, Theorem \ref{T:GeneralLow} identifies a low disorder regime for any expander graph with bounded max-to-min degree ratio. The proof goes as follows.

For unweighted graphs $G=(V, E)$, general conditions have been formulated in \cite{MNS21} that imply the diameter of the UST on $G$ to be typically of order ${|V|}^{1/2}$. Roughly speaking, these conditions require that the stationary distribution of the random walk on $G$ assigns mass of the same order to all vertices, that the mixing time of the lazy random walk is of order $o(|V|^{1/2})$, and that the random walk return probabilities decay fast enough. 
In \cite{MSS23}, the authors have extended this result to weighted graphs and with some further generalizations: we recall this extension in Section \ref{SS:DiamterCondition}. We are then left to verify these conditions, which is achieved via concentration arguments for the stationary distribution and bottleneck ratio of the graph, see Section \ref{SS:expansion_concentrate}.
In Subsection \ref{SS:noconcentration} we explain why the concentration results fail as soon as $\beta_n\gg n / \log n$.

\smallskip

We conclude with Section \ref{SS:graphon}, where we prove Proposition \ref{P:GHPbeta}, namely, the convergence of the RSTRE on the complete graph to the CRT when $\beta$ does not depend on $n$. The proof follows from the results of \cite{AS24} once we show that the weighted graph $(K_n, \weight_n)$ converges to a (connected) positive constant graphon.

%We also show that once $\beta_n \gg n / \log n$ these conditions no longer remain true. 
\medskip

{\bf High disorder regime:}  
Theorem \ref{T:highDisorder} in Section \ref{S:High}  restates the second part of Theorem \ref{T:main} and adds a result on the expectation of the diameter. Its proof strategy is adapted from \cite{ABR09}, where the authors deal with the diameter of the MST. The key differences with \cite{ABR09} are collected and explained in Remark \ref{R:MSTdiff}.

We begin the proof in Section \ref{SS:graph_reduce} by introducing a natural coupling between the random environment $\omega$ and a family of Erdős-R\'enyi random graphs, where edges are retained if the corresponding edge weight $\exp( {-\beta_n \omega_e})$ is large enough. This coupling is used in the crucial Proposition \ref{P:DiaC1}, where we show that the diameter of the random tree $\cT$ sampled according to the RSTRE is essentially determined by the diameter of the subset $\cT_{\mathcal C_1(p_0)}$ of $\cT$ that connects vertices in the largest component $\mathcal C_1(p_0)$ of an Erdős-R\'enyi random graph with edge retention probability $p_0$ that is critical, or slightly supercritical. The proof of Proposition \ref{P:DiaC1} is split into three parts: Lemma \ref{L:gap}, where we show that, with high probability, the vertices in the $p_0$-connected components of the random graph are connected in $\cT$ only using edges with high enough weight; Lemma \ref{L:sizepm}, where we show that the diameter of $\cT_{\mathcal C_1(p_0)}$ is comparable to that of the larger set $\cT_{\mathcal C_1(p_m)}$, where $p_m$ is the slightly supercritical parameter that makes the cardinality $|C_1(p_m)|$ of order $n/\log n$; finally  Lemma \ref{L:fasthitLERW} takes into account all vertices outside of $\mathcal C_1(p_m)$, showing that a loop erased random walk starting from those vertices hits $\mathcal C_1(p_m)$ in a polylogarithmic number of steps, and thus not contributing much to the diameter of $\cT$ by Wilson's algorithm.
The proof of Theorem \ref{T:highDisorder} is concluded in Section \ref{SS:Thm_high} as a consequence of Proposition \ref{P:DiaC1} and by choosing a specific value of $p_0$ in the critical window.

\smallskip

In Section \ref{SS:veryHigh}, we give an alternative proof in the case of very high disorder, that is when $\beta_n$ grows fast enough. This approach has the advantage of being applicable to any underlying graph $G$.

\medskip

{\bf Intermediate regime:} Section \ref{S:Conclusion} is devoted to the heuristics behind Conjecture \ref{C:Intermediate}. In short, we will apply Proposition \ref{P:DiaC1} with a choice of $p_0$ slightly above the critical window of Erdős-R\'enyi random graph. Thanks to the results of \cite{DKLP11}, we can then reduce to the study of the diameter of the RSTRE on random regular graphs, which could hopefully be done using the techniques developed in \cite{MSS23}.

\medskip

{\bf Note:} Shortly after we posted our paper, K\'{u}sz posted a paper \cite{K24} which introduces the same random spanning tree model with essentially the same result on the diameter as in Theorem \ref{T:main}. The key difference between our approaches is that K\'{u}sz uses the Aldous-Broder algorithm to analyse the UST on weighted graphs while we use Wilson's algorithm. K\'{u}sz studies the high disorder regime in Theorem \ref{T:main} by comparing the Aldous-Broder algorithm for UST with Prim's algorithm for MST, which reveals finer transitions within the high disorder regime.

%%%%%%%%%%%%%%%%%%%%%%%%%%%%%%%%%%%%%%%%%%%%%%%%%%%%%%%%%%%%
%%%%%%%%%%%%%%%%%%%% NEW SECTION %%%%%%%%%%%%%%%%%%%%
%%%%%%%%%%%%%%%%%%%%%%%%%%%%%%%%%%%%%%%%%%%%%%%%%%%%%%%%%%%%

%\clearpage
%\newpage

\section{Preliminaries} \label{S:Notation}

In this section, we gather some preliminary material. In Section \ref{SS:ElectricRW} we recall some results about electrical networks and random walks; in Section \ref{SS:Wilson} we describe Wilson's algorithm for constructing the UST and recall the spatial Markov property of USTs; Section \ref{SS:Mixing} is on the bottleneck ratio of weighted random graphs and its application to bound the mixing time of the associated random walk. In Section \ref{SS:DiamterCondition} we recall the conditions
formulated in \cite{MSS23} to control the diameter of the UST on weighted random graphs. Lastly in Section \ref{SS:ERintro}, we collect several results about Erdős-R\'enyi random graphs that will be used later in the proof for the high disorder regime. 

\subsection{Electrical networks, random walks and UST} \label{SS:ElectricRW}

An electrical network is a weighted graph $(G, \weight)=((V, E), \weight)$ with non-negative weights $\weight=(\weight(e))_{e\in E}$ on the unoriented edges $e=(x,y)=(y,x)\in E$.  The edge weights $\weight(e)$ are known as conductances, and their reciprocals $r(e):=1/\weight(e)$ are called resistances. 
%Let $(G,\weight)$ be a weighted graph,  electrical network consisting of a finite connected graph $G=(V, E)$ and non-negative weights (also called conductances) $\weight = (\weight(e))_{e \in E}$ on the edges, indexed by undirected edges $e = (x,y) = (y,x) \in E$. We call resistance of an edge $e$ the quantity $r(e)=1/\weight(e)$. 
In our setting, we shall choose weights given by 
\begin{equation*}
	\weight(e) = \weight_\beta(e) = \exp(-\beta \omega_e),
\end{equation*}
where $\omega_e$ are i.i.d.\ random variables with common distribution $\mu$. This induces the weighted UST measure $\bP^\weight_G$ on the set of spanning trees $\mathbb{T}_G$ with
\begin{equation} \label{eq:defUST}
	\bP^\weight_G(\cT=T) = \frac{1}{Z^{\weight}_G} \prod_{e \in T} \weight(e), \qquad Z^{\weight}_G:= \sum_{T \in \mathbb{T}_G} \prod_{e \in T} \weight(e)\,.
\end{equation}
Furthermore, we can define a lazy random walk on the weighted graph $(G,\weight)$, whose law we denote by $Q$, with transition probabilities
\begin{equation*}
	q(u,v) =
	\begin{cases}
		\frac{1}{2} & \text{if } u=v \\
		\frac{1}{2} \frac{\weight(u,v)}{\sum_{x \sim u} \weight(u,x)} =: \frac{1}{2} \frac{\weight(u,v)}{\weight(u)} & \text{if } u\neq v\,,
	\end{cases}
\end{equation*}
where $x \sim u$ denotes that there is an edge $(x,u)$ in $G$ between $x$ and $u$ and for $u\sim v$ we write $\weight(u,v):=\weight((u,v))$. 
Furthermore, for a vertex $u \in V$, we have slightly abused notation by defining 
$\weight(u) := \sum_{x \sim u} \weight(u,x)$. 
The $t$-step transition probabilities of the lazy random walk will be denoted by $q_t(\cdot, \cdot)$.
Note that the lazy random walk on $(G,\weight)$ is aperiodic and has a stationary distribution $\pi$ satisfying 
\begin{equation*}
	\pi(v) = \frac{\weight(v)}{2 \sum_{e \in E} \weight(e)} \qquad \text{and} \qquad \pi(S) := \sum_{v \in S} \pi(v) = \frac{\sum_{v \in S}\weight(v)}{2 \sum_{e \in E} \weight(e)} 
\end{equation*}
for any $S\subseteq V$.
We also define $\pi_{\min} := \min_{v \in V} \pi(v)$ and $\pi_{\max} := \max_{v \in V} \pi(v)$.

There are several equivalent definitions of effective resistance, but for our purposes, we follow \cite[Chapter 2]{LP16}. 
For disjoint sets $A, B \subseteq V$, we define the effective resistance between $A$ and $B$ by 
\begin{equation*}
	\effR{\weight}{A}{B} := \frac{1}{\sum_{a \in A} \weight(a) Q_{a}(\tau_B < \tau_A^+)}, 
\end{equation*}
where $Q_a$ is the law of the lazy random walk started at $a$, $\tau_B$ is the first hitting time of $B$, and $\tau_A^+$ is the first return time to $A$. When $A=\{a\}$ and $B = \{b\}$ are singletons, we will just write
$\effR{\weight}{a}{b}$.

We will need the following classic results and refer to \cite[Sections 4.2, 2.3  and 2.4]{LP16} for more details. In particular, Theorem \ref{T:Kirchhoff} draws a connection between UST and random walk through the effective resistance of the graph.

\begin{theorem}[Kirchhoff's Formula] \label{T:Kirchhoff}
	Given any finite electrical network $(G, \weight)$ and corresponding weighted UST measure $\bP^\weight_G$, we have
	\begin{equation} \label{eq:p_in_tree_r_eff}
		\bP^\weight_G \big( (u,v) \in \cT \big) = \weight(u,v) \effR{\weight}{u}{v}\,.
	\end{equation}
\end{theorem}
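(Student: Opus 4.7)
The plan is to combine the matrix-tree theorem with a deletion–contraction identity for the partition function, and then identify the resulting ratio with the effective resistance via Cramer's rule applied to the discrete Poisson problem.

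First, I would start from the identity
\begin{equation*}
\bP^\weight_G\bigl((u,v)\in\cT\bigr)=\frac{\weight(u,v)\,Z^\weight_{G/(u,v)}}{Z^\weight_G},
\end{equation*}
which is immediate from \eqref{eq:defUST}: spanning trees of $G$ containing the edge $e=(u,v)$ are in weight-preserving bijection (after stripping off the factor $\weight(u,v)$) with spanning trees of the contracted graph $G/(u,v)$, where parallel edges created by contraction are retained with their original weights (or equivalently summed, since the matrix-tree theorem handles both).

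Next, I would invoke the weighted matrix-tree theorem: if $L^\weight$ denotes the weighted graph Laplacian on $V$, then for any vertex $w$, $Z^\weight_G=\det L^{\weight,(w)}$, where $L^{\weight,(w)}$ is the matrix obtained by deleting the row and column indexed by $w$. Applied with $w=v$, this evaluates the denominator. Applied to $G/(u,v)$ (with the merged vertex playing the role of the deleted row/column), it gives $Z^\weight_{G/(u,v)}=\det L^{\weight,(\{u,v\})}$, where now both the $u$-row/column and the $v$-row/column have been removed from the original $L^\weight$; combinatorially this determinant enumerates weighted spanning $2$-forests of $G$ with one component containing $u$ and the other containing $v$, and it coincides with the contracted tree count.

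Finally, I would solve the Kirchhoff system $L^\weight V=e_u-e_v$ (unit current injected at $u$, extracted at $v$) with the gauge $V(v)=0$. By the random-walk definition of effective resistance recalled above, the resulting voltage satisfies $V(u)-V(v)=\effR{\weight}{u}{v}$, so it suffices to show $V(u)=\det L^{\weight,(\{u,v\})}/\det L^{\weight,(v)}$. This is exactly Cramer's rule applied to the reduced linear system on $V\setminus\{v\}$: the numerator determinant is obtained by replacing the $u$-column of $L^{\weight,(v)}$ by the right-hand side $e_u-e_v$ restricted to $V\setminus\{v\}$, and expanding along that column produces $\det L^{\weight,(\{u,v\})}$. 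Substituting this back into the deletion–contraction identity yields
\begin{equation*}
\bP^\weight_G\bigl((u,v)\in\cT\bigr)=\weight(u,v)\,\frac{\det L^{\weight,(\{u,v\})}}{\det L^{\weight,(v)}}=\weight(u,v)\,\effR{\weight}{u}{v}.
\end{equation*}
The only real bookkeeping obstacle is keeping track of sign conventions when identifying the contracted-graph Laplacian with the doubly-reduced $L^{\weight,(\{u,v\})}$ and when applying Cramer's rule; once that is pinned down, the proof is essentially linear algebra. A purely probabilistic alternative goes through Wilson's algorithm and the identity between loop-erased random walk and unit current flow, but the algebraic route above is the cleanest and most self-contained.
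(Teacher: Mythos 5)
Your argument is correct, but note that the paper does not actually prove Theorem \ref{T:Kirchhoff}: it is quoted as a classical fact with a pointer to \cite[Sections 4.2, 2.3 and 2.4]{LP16}, where the standard derivation goes through the identification of $\bP^\weight_G\big((u,v)\in\cT\big)$ with the unit current flowing through the edge $(u,v)$ (via the orthogonal decomposition of the edge space, or equivalently via a random-walk/transfer-current argument). Your route is the purely algebraic one: the contraction identity $\bP^\weight_G\big((u,v)\in\cT\big)=\weight(u,v)\,Z^\weight_{G/(u,v)}/Z^\weight_G$, the weighted matrix-tree theorem for both the original and the contracted graph (correctly identifying the contracted Laplacian minor with the doubly-reduced minor $L^{\weight,(\{u,v\})}$, since self-loops drop out and row sums at vertices outside $\{u,v\}$ are unchanged), and Cramer's rule for the grounded Poisson problem $L^{\weight,(v)}V=e_u$, whose cofactor expansion indeed gives $V(u)=\det L^{\weight,(\{u,v\})}/\det L^{\weight,(v)}$ with a $+1$ sign. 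This is a legitimate and self-contained alternative; what it buys is that it needs no potential/flow theory beyond linear algebra, at the price of invoking the all-minors matrix-tree theorem and, crucially, the equivalence between the voltage drop $V(u)-V(v)$ for a unit current and the escape-probability definition of $\effR{\weight}{u}{v}$ used in the paper --- that equivalence is exactly the content of \cite[Section 2.3]{LP16} and should be cited or proved if you want the argument fully self-contained (note also that with the paper's \emph{lazy} walk the return probability, hence the displayed formula for $\effR{\weight}{u}{v}$, differs by a benign factor of $2$ from the non-lazy convention your voltage computation uses; the equivalence you need is the standard non-lazy one).
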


\begin{lemma}[Series Law] \label{L:SeriesLaw}
	Suppose $v \in V$ is a vertex of degree $2$ with incident edges $e_1 = (u_1, v)$ and $e_2 = (v, u_2)$ that have resistances $r_1 := 1/\weight(e_1)$ and $r_2 := 1/\weight(e_2)$, respectively. Consider the  graph $G'$
	on $V \setminus \{v\}$  where $e_1$ and $e_2$ are replaced by a single edge $e$ between $u_1$ and $u_2$, with resistance $r(e) := r_1 + r_2$. Then, for any disjoint $A,B\subseteq V\setminus\{v\}$, the effective resistance between $A$ and $B$ in $G$ is equal to that in $G'$.
\end{lemma}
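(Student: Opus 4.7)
The plan is to invoke the variational characterization of effective resistance (Thomson's principle), which states that
\begin{equation*}
\effR{\weight}{A}{B} = \inf\Big\{ \mathcal{E}(\theta) := \sum_{e \in E} r(e)\,\theta(e)^2 \;:\; \theta \text{ a unit flow from } A \text{ to } B \Big\},
\end{equation*}
where $r(e) = 1/\weight(e)$; this is equivalent to the random-walk definition used in the excerpt (see e.g.\ \cite[Ch.~2]{LP16}). Once this characterization is in hand, the lemma reduces to producing an energy-preserving bijection between unit flows from $A$ to $B$ on $G$ and on $G'$.

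The concrete construction goes as follows. Fix a unit flow $\theta$ from $A$ to $B$ on $G$. Since $A, B \subseteq V\setminus\{v\}$, the vertex $v$ is an internal node, so the antisymmetric flow $\theta$ satisfies Kirchhoff's node law at $v$; because $v$ has only $u_1$ and $u_2$ as neighbours, this forces $\theta(u_1, v) = \theta(v, u_2) =: i$. Define $\theta'$ on $G'$ by $\theta'(e) := i$ on the new edge $e=(u_1,u_2)$ and $\theta'(e') := \theta(e')$ on every other edge. Divergence is unchanged at $u_1, u_2$ (the net flow $i$ between them is preserved) and at all other vertices, so $\theta'$ is a unit flow from $A$ to $B$ on $G'$. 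Its energy is
\begin{equation*}
\mathcal{E}(\theta') \;=\; (r_1+r_2)\, i^2 \;+\!\!\! \sum_{e' \notin \{e_1,e_2\}}\!\!\! r(e')\,\theta(e')^2 \;=\; r_1 i^2 + r_2 i^2 \;+\!\!\! \sum_{e' \notin \{e_1,e_2\}}\!\!\! r(e')\,\theta(e')^2 \;=\; \mathcal{E}(\theta).
\end{equation*}
Conversely, any unit flow $\theta'$ on $G'$ lifts to a unit flow on $G$ by setting $\theta(u_1,v) = \theta(v,u_2) := \theta'(e)$ and copying values on the remaining edges; the same computation shows the energy is preserved. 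This establishes a bijection of feasible flows under which the energy is invariant, so the two infima agree and the series law follows.

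The only real subtlety is justifying the variational characterization from the definition adopted in the excerpt (which uses the lazy random walk and weighted hitting probabilities). This is standard: the lazy factor $1/2$ drops out because it appears both in $\weight(a)$ (when expressed via $\pi$) and in the transition kernel, so $\weight(a) Q_a(\tau_B < \tau_A^+)$ coincides with the usual escape conductance from $A$ to $B$, and Dirichlet's principle then identifies this with the reciprocal of the minimum-energy unit flow. Apart from citing this equivalence, no obstacle arises: the series law is essentially a linear identity at the level of Kirchhoff's node law at a degree-two vertex.
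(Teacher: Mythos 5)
Your argument is correct, but it is worth noting that the paper does not prove Lemma \ref{L:SeriesLaw} at all: it is stated as a classical fact with a pointer to \cite[Ch.~2]{LP16}. So there is no ``paper proof'' to match; what you supply is a self-contained derivation via Thomson's principle, producing an energy-preserving correspondence between unit flows from $A$ to $B$ on $G$ and on $G'$. The key step is sound: since $A,B\subseteq V\setminus\{v\}$, the node law at the degree-two vertex $v$ forces equal flow $i$ through $e_1$ and $e_2$, and $(r_1+r_2)i^2=r_1i^2+r_2i^2$ gives energy invariance in both directions, hence equality of the two infima. This is a perfectly standard alternative to the harmonic-function/network-reduction proofs one finds in \cite{LP16}, and it has the advantage of being purely linear-algebraic, requiring no probabilistic input beyond the variational characterization itself.

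One small inaccuracy in your closing remark: the lazy factor does \emph{not} drop out in the way you claim. For the lazy walk one has $Q_a(\tau_B<\tau_A^+)=\tfrac12\,P_a(\tau_B<\tau_A^+)$, where $P_a$ is the non-lazy walk (the lazy step at time $1$ keeps the walk at $a\in A$ and kills the event, while later laziness does not affect which set is hit first), so $\weight(a)Q_a(\tau_B<\tau_A^+)$ is \emph{half} the usual escape conductance and the quantity $\effR{\weight}{A}{B}$ defined in the paper is twice the standard effective resistance. This does not damage your proof: the factor $2$ is the same universal constant for $G$ and $G'$, so the asserted equality of effective resistances (in the paper's convention) is equivalent to the equality of the standard ones, which is what your flow argument establishes. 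You should simply replace the claim that the two quantities ``coincide'' by the observation that they differ by a fixed factor independent of the network, or work directly with the non-lazy escape probabilities throughout.
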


\begin{theorem}[Rayleigh’s Monotonicity Principle] \label{L:Rayleigh}
	Let $\weight, \weight'$ be weights on $G$ with $\weight(e) \leq \weight'(e)$ for all edges $e \in E$. If $A,B \subseteq V$ are disjoint sets, then
	\begin{equation*}
		\effR{\weight'}{A}{B} \leq \effR{\weight}{A}{B},
	\end{equation*}
	i.e.\ the effective resistance is a non-increasing function of the weights.
\end{theorem}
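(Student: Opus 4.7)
The plan is to deduce the inequality from Thomson's Principle, the variational characterization of effective resistance (see \cite[Chapter 2]{LP16}). Recall that Thomson's Principle states
\[
\effR{\weight}{A}{B} = \inf_{\theta} \sum_{e \in E} \frac{1}{\weight(e)} \theta(e)^2,
\]
where the infimum runs over all unit flows $\theta$ from $A$ to $B$ on $G$, i.e.\ antisymmetric functions on oriented edges whose divergence is $+1$ on $A$ in aggregate, $-1$ on $B$ in aggregate, and $0$ at every other vertex. Crucially, this admissibility constraint depends only on the underlying graph $G$ and not on the conductances, so the class of unit flows is the same for both $\weight$ and $\weight'$.

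First, I would note that since $\weight(e) \leq \weight'(e)$ for every edge $e \in E$, the corresponding resistances satisfy $r(e) := 1/\weight(e) \geq 1/\weight'(e) =: r'(e)$ pointwise (when $\weight(e) = 0$ the edge is effectively absent and may be removed; otherwise take reciprocals directly). Consequently, for any fixed unit flow $\theta$ from $A$ to $B$,
\[
\sum_{e \in E} \frac{1}{\weight'(e)} \theta(e)^2 \;\leq\; \sum_{e \in E} \frac{1}{\weight(e)} \theta(e)^2.
\]
Taking the infimum over all unit flows $\theta$ on both sides and invoking Thomson's Principle yields
\[
\effR{\weight'}{A}{B} \;\leq\; \effR{\weight}{A}{B},
\]
which is exactly the claim.

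There is essentially no real obstacle: once one has Thomson's Principle as a black box, the monotonicity is immediate because increasing conductances can only \emph{decrease} the energy cost of every admissible flow, hence of the optimal flow. The only mild care required is in handling flows between sets rather than between singletons; this is dealt with by the standard device of contracting $A$ into a single source vertex $a^*$ and $B$ into a single sink vertex $b^*$, which reduces the situation to flows between two vertices without changing either effective resistance (this contraction is compatible with the weights, as the contracted edges retain their conductances). An alternative, purely probabilistic, route would start from the random-walk definition of $\effR{\weight}{A}{B}$ given in the text, apply the series/parallel reductions together with the fact that $\effR{\weight}{A}{B}$ can be written as an infimum of energies via the Dirichlet principle, and then proceed as above; this is equivalent but slightly less direct.
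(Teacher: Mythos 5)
Your argument is correct: Thomson's Principle gives the effective resistance as an infimum of flow energies over a class of unit flows that does not depend on the conductances, and increasing $\weight$ pointwise decreases the energy of every admissible flow, hence the infimum; the reduction from sets $A,B$ to single vertices by contraction is standard and compatible with the paper's escape-probability definition (up to the harmless laziness factor). The paper does not prove this statement at all—it is quoted as a classical fact with a citation to \cite{LP16}—and your proof is precisely the standard argument given there, so there is nothing to reconcile.
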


%%%%%%%%%%%%%%%%%

\subsection{Wilson's algorithm and spatial Markov property}\label{SS:Wilson}

We briefly recall Wilson's algorithm to construct the UST on a finite connected graph. This requires the notion of \textit{loop erased random walk} (LERW). Assume we are given a trajectory $\Vec{x} = (x_0, \ldots, x_m)$. % of a random walk $X$ on $(G,\weight)$, t
The loop erasure $\Vec y$ of $\Vec x$ erases any cycles (in chronological order) that appear in $\Vec{x}$. More precisely, we follow the sequence $x_0, x_1, \dots$ and the first time we encounter a loop of the form $u, v, w, ..., u$, we erase $(v, w, ..., u)$ from the trajectory and then repeat the process until we reach $x_m$. 
%start at $X_0$ and let $0 \leq i \leq m$ be the largest index such that $X_i = X_0$ and set $Y_0 = X_i$.  Now consider the largest index $j$, with $i+1 \leq j \leq m$ (if it exists), such that $X_j = X_{i+1}$ and set $Y_1 = X_j$. Repeat this procedure until the index $m$ is reached 
If $X=(X_0,\dots, X_m)$ is a random walk on $(G,\weight)$, then $Y$ obtained from $X$ through this procedure is called the loop erased random walk.
% The path $Y = (Y_0, \ldots, Y_k)$ obtained from $X$ through this procedure is called the loop erasure, that is, what is left of the original trajectory. 
%The law of the trajectory of $Y$, with respect to the underlying random walk $X$,  is called the law of the loop erased random walk.

Wilson's algorithm works as follows: first choose an arbitrary ordering $\{v_1, \ldots, v_n\}$ of the vertices of the given graph $G$ and set $T(0) = (\{v_1\},\emptyset)$. Then run a random walk $X$ started at $v_2$ until $X$ hits the vertex set of $T(0)$. Update the tree by setting $T(1)$ to be the union of $T(0)$ and the vertices and edges in the trajectory of the loop erasure of $X$. Repeat this procedure for each vertex $v_i$, where for the $i$-th step we run a random walk $X$ started at $v_i$ until $X$ hits the vertex set of $T(i-1)$ and update the tree by letting $T(i)$ be the union of $T(i-1)$ and the (possibly empty if $v_i \in T(i-1)$) path of the loop erasure of $X$. In the final step, the algorithm outputs a spanning tree $\cT = T(n)$. The following theorem first appeared in \cite{Wil96}.

\begin{theorem}
	Let $(G, \weight)$ be a finite connected weighted graph. Wilson's Algorithm samples a spanning tree $\cT$ according to the weighted UST measure $\bP^\weight_G(\cdot)$.
\end{theorem}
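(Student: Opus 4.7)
My plan is to follow the \emph{cycle-popping} argument of Propp and Wilson. The strategy is to replace the sequential description of Wilson's algorithm by a static one in which the stacks of ``potential steps'' are revealed up-front, and then to show that the resulting output is a function of these stacks that does not depend on the order of processing.

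First, I would recast the algorithm as follows. Designate $v_1$ as a root, and equip each vertex $v \neq v_1$ with an independent, infinite i.i.d.\ stack of arrows $(\xi^v_1, \xi^v_2, \ldots)$, where each $\xi^v_k$ is a neighbor of $v$ (possibly $v$ itself) drawn according to the lazy random walk kernel $q(v, \cdot)$ from Section~\ref{SS:ElectricRW}. At any stage, reading the top arrow at each stack defines a function $f \colon V \setminus \{v_1\} \to V$: following $f$ from each non-root vertex either leads to $v_1$ — in which case $f$ is (the edge set of) a spanning tree oriented toward $v_1$ — or produces at least one directed cycle. To \emph{pop} a cycle $C$, one removes the top arrow at each vertex of $C$, exposing the next entry.

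The key combinatorial lemma, due to Propp and Wilson, is that the multiset $\mathcal C$ of cycles popped before termination and the final tree depend only on the stacks and not on the order in which cycles are popped. The proof is a short induction on the number of pops: if $C_1$ and $C_2$ are two distinct cycles both present in the current configuration and $C_1$ is popped first, one checks that $C_2$ remains intact — if $C_1$ and $C_2$ share a vertex $v$, they would both have to use the same top arrow at $v$, which forces $C_1 = C_2$, a contradiction. Hence every valid popping order produces the same output, and Wilson's algorithm — which pops cycles as they arise along the successive LERWs issued from $v_2, v_3, \ldots$ — is one such order.

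With the invariance lemma in hand, the distribution of the output tree $\cT$ is read off by direct computation. For any spanning tree $T$ oriented toward $v_1$ and any admissible collection of cycles $\mathcal C_0$, the event $\{\mathcal C = \mathcal C_0,\, \cT = T\}$ is determined by a prescription of finitely many stack entries: the top layers must spell out $\mathcal C_0$ in some admissible layering, and the first entry at each non-root $v$ surviving after popping must equal the parent $T(v)$ of $v$ in $T$. Independence of different stacks and of successive draws within each stack then yields
\begin{equation*}
\mathbf{P}\bigl(\mathcal C = \mathcal C_0,\, \cT = T\bigr) \;=\; \mathbf{P}(\mathcal C_0)\, \prod_{v \neq v_1} q\bigl(v, T(v)\bigr),
\end{equation*}
where the second factor depends on $T$ but not on $\mathcal C_0$. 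Summing over $\mathcal C_0$ (the total mass is $1$, since Wilson's algorithm terminates a.s.\ on a finite connected graph by recurrence of the random walk), one obtains $\mathbf{P}(\cT = T) \propto \prod_{v \neq v_1} q(v, T(v)) = \prod_{v \neq v_1}\tfrac{\weight(v,T(v))}{2\weight(v)}$. The factor $2^{-(n-1)}$ and the denominators $\prod_{v \neq v_1}\weight(v)$ are independent of $T$, while $\prod_{v \neq v_1} \weight(v, T(v)) = \prod_{e \in T} \weight(e)$, since each edge of $T$ corresponds to exactly one child/parent pair when oriented toward $v_1$. This gives $\mathbf{P}(\cT=T) \propto \prod_{e \in T} \weight(e)$, matching \eqref{eq:defUST}. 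The main obstacle is the invariance lemma for cycle-popping; once it is in place, the remainder is unambiguous bookkeeping, and laziness plays no role since self-loops simply become trivial cycles that are popped immediately.
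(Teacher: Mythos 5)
The paper does not prove this statement at all: it is quoted from Wilson's original work \cite{Wil96} (see also \cite[Chapter 4]{LP16}), so there is nothing internal to compare against. Your cycle-popping argument is precisely the standard Propp--Wilson proof from that literature, and it is correct in outline: the stack formalism, the observation that two distinct cycles simultaneously present in the top-arrow configuration must be vertex-disjoint (sharing a vertex forces them to coincide), the resulting order-invariance of the popped colored cycles and of the final tree, and the factorization $\mathbf{P}(\mathcal C=\mathcal C_0,\,\cT=T)=\mathbf{P}(\mathcal C_0)\prod_{v\neq v_1}q(v,T(v))$ leading to $\mathbf{P}(\cT=T)\propto\prod_{e\in T}\weight(e)$, with laziness contributing only trivial self-loop cycles. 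Two small points to tighten in a full write-up. First, the invariance lemma needs slightly more than "pop $C_1$, then $C_2$ is intact'': one should argue that any colored cycle present at some stage is popped in \emph{every} terminating maximal popping sequence (if it were never touched, it would persist and the configuration could not become a tree), and then induct; your vertex-disjointness observation is the engine of that induction. Second, the parenthetical claim that the total mass of $\sum_{\mathcal C_0}\mathbf{P}(\mathcal C_0)$ is $1$ is not what you need and is not literally true; what matters is that the set of admissible colored-cycle collections, and hence the constant $\sum_{\mathcal C_0}\mathbf{P}(\mathcal C_0)$, does not depend on $T$ (any poppable collection is compatible with any tree), while almost-sure termination guarantees the resulting proportional weights normalize to a probability. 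With those clarifications the bookkeeping $\prod_{v\neq v_1}\weight(v,T(v))=\prod_{e\in T}\weight(e)$ finishes the proof exactly as you say, matching \eqref{eq:defUST}.
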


%\smallskip

We state one more useful property of the UST measure, called the spatial Markov property, and refer e.g.\ to \cite[Sec.\ 2.2.1]{HN19} for more details. Consider a graph $G = (V,E$). The \textit{deletion} of an edge $e \in E$ produces a new graph $G - \{e\}$ obtained by removing the edge $e$ from $E$. Similarly, the \textit{contraction} of $e$ produces the graph $G / e$ obtained by first deleting $e$ and then identifying the endpoints of $e$ as a single vertex. It can be shown that the deletion and contraction operations commute, and for disjoint $A, B \subseteq E$ we may define $(G - B) / A$ as the graph obtained by removing edges in $B$ and contracting edges in $A$. 

We remark that $(G - B) / A$ may contain loops and multiple edges between vertices. However, one can define the weighted UST measure $\bP^{\weight}_{(G-B)/A}$ on $(G - B) / A$ similarly to \eqref{eq:defUST} by retaining the original edge weights (if the edge is not removed) and considering all spanning trees $\mathbb{T}_{(G - B) / A}$, where each multiple edge between a pair of vertices gives rise to a different spanning tree.

\begin{lemma}[Spatial Markov Property] \label{L:USTmarkov}
	Let $(G, \weight)$ be a finite connected weighted graph and let $A, B \subseteq E$ be two disjoint sets of edges such that $\bP^\weight_G(A \subseteq \cT, B\cap \cT=\emptyset)>0$. Then for any set of edges $F \subseteq E$,
	\begin{equation*}
		\bP^\weight_G \big( \cT = F \mid A \subseteq \cT, B\cap \cT =\emptyset \big) = \bP^{\weight}_{(G-B)/A} \big( \cT \cup A = F \big).
	\end{equation*}
\end{lemma}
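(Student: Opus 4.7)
The plan is to reduce the statement to a direct ratio computation that hinges on a natural bijection between spanning trees. The key observation is that the weighted UST measure is a product measure over the edges of the tree (up to normalisation), and that deletion/contraction preserves the class of spanning trees in a controlled way.

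\textbf{Step 1: The bijection.} The core combinatorial fact is that the map
\begin{equation*}
\Phi : \{T \in \T_G : A \subseteq T,\ B \cap T = \emptyset\} \longrightarrow \T_{(G-B)/A}, \qquad T \longmapsto T \setminus A,
\end{equation*}
is a bijection. Since $\bP^\weight_G(A\subseteq \cT, B\cap \cT=\emptyset)>0$, the set $A$ is cycle-free in $G$ and $(G-B)/A$ is connected, so the target set is non-empty. One direction is immediate: if $T$ is a spanning tree of $G$ containing $A$ and avoiding $B$, then contracting $A$ identifies the appropriate vertices and the remaining edges $T\setminus A$ form a spanning tree of $(G-B)/A$, because contraction of a forest inside a tree produces a tree on the quotient vertex set. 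Conversely, if $T' \in \T_{(G-B)/A}$, then $T' \cup A$ is a spanning subgraph of $G$ with $|V|-1$ edges (counting the edges of $A$ and $T'$, since $A$ is a forest in $G$ with $|A|$ edges and $T'$ has $|V|-1-|A|$ edges after contraction), and it is connected and cycle-free by the same contraction argument in reverse. Hence $\Phi$ is a bijection with inverse $T' \mapsto T' \cup A$.

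\textbf{Step 2: Ratio computation.} For any $F \subseteq E$, if $F$ is not a spanning tree of $G$ containing $A$ and avoiding $B$, both sides are zero (the LHS by definition; the RHS because $F \cup A = F$ fails to be a spanning tree of $(G-B)/A$ after one checks the consistency of the union). Otherwise, writing the conditional probability explicitly,
\begin{equation*}
\bP^\weight_G\bigl(\cT = F \mid A\subseteq \cT,\ B\cap \cT = \emptyset\bigr)
= \frac{\prod_{e \in F} \weight(e)}{\sum_{T : A \subseteq T,\ B \cap T = \emptyset} \prod_{e \in T} \weight(e)}.
\end{equation*}
Splitting $\prod_{e\in T}\weight(e) = \prod_{e\in A}\weight(e)\cdot \prod_{e\in T\setminus A}\weight(e)$ and invoking the bijection $\Phi$, the denominator equals $\prod_{e\in A}\weight(e)\cdot Z^\weight_{(G-B)/A}$, where $Z^\weight_{(G-B)/A}$ is the partition function defined exactly as in \eqref{eq:defUST} with the retained edge weights. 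Similarly, $\prod_{e\in F}\weight(e) = \prod_{e\in A}\weight(e)\cdot \prod_{e\in F\setminus A}\weight(e)$. The factor $\prod_{e\in A}\weight(e)$ cancels and we are left with
\begin{equation*}
\frac{\prod_{e \in F\setminus A} \weight(e)}{Z^\weight_{(G-B)/A}} = \bP^\weight_{(G-B)/A}\bigl(\cT = F\setminus A\bigr) = \bP^\weight_{(G-B)/A}\bigl(\cT \cup A = F\bigr),
\end{equation*}
which is exactly the right-hand side.

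\textbf{Main obstacle.} The only non-formal part is verifying that $\Phi$ is a bijection, and more precisely that multi-edges and self-loops arising from the contraction $(G-B)/A$ are accounted for correctly: the definition of $\bP^\weight_{(G-B)/A}$ via \eqref{eq:defUST} treats parallel edges as distinct, so distinct trees in $G$ that share the same edge set after contraction correspond to distinct spanning trees in $(G-B)/A$, preserving the bijection. Self-loops created by contraction cannot appear in any spanning tree of $(G-B)/A$, which matches the fact that in $G$ such edges would create a cycle with $A$ and therefore cannot belong to $T \setminus A$. Once this bookkeeping is clear, the computation above completes the proof.
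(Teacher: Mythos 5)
Your proof is correct. Note that the paper does not actually prove this lemma --- it is quoted with a pointer to the literature (Section 2.2.1 of the cited reference of H.--Nachmias), and your argument, the deletion--contraction bijection $T \mapsto T\setminus A$ together with the cancellation of the common factor $\prod_{e\in A}\weight(e)$ in the ratio defining the conditional probability, is precisely the standard proof given there, including the correct bookkeeping for parallel edges and self-loops in the multigraph $(G-B)/A$. The only cosmetic blemish is in your degenerate case: the phrase ``$F\cup A=F$ fails to be a spanning tree'' should read that the event $\{\cT\cup A=F\}$ has zero probability, which you can see by splitting into the cases $A\not\subseteq F$, $F\cap B\neq\emptyset$, and $F\supseteq A$, $F\cap B=\emptyset$ but $F\notin\T_G$ (in the last case $F\setminus A\notin\T_{(G-B)/A}$ by your bijection); this is a wording issue, not a gap.
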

\noindent
In other words, conditioned on $\cT$ containing all edges in $A$ and none of the edges in $B$, the law of $\bP^\weight_G$ restricted to the edges in $E \setminus (A \cup B)$ is equal to the weighted UST measure on $(G-B)/A$.

\subsection{Bottleneck ratio and mixing time} \label{SS:Mixing}

Given a finite weighted graph $(G,\weight)$, define the {\em bottleneck ratio} of a set $S \subseteq V$ by
\begin{equation}\label{D:bottleneck}
	\Phi_{(G, \weight)}(S)
	:=  \frac{\sum_{x\in S, y\in S^c} \pi(x) q(x,y)}{\pi(S)},
\end{equation}
where $q(\cdot, \cdot)$ and $\pi(\cdot)$ are respectively the transition probabilities and stationary distribution of the lazy random walk on $(G, \weight)$ introduced in  Section \ref{SS:ElectricRW}. In the literature, this quantity is sometimes also called the conductance of $S$, and we remark that $\Phi_{(G, \weight)}(S)$ can also be written in the form
\begin{equation}\label{eq:PhiGS}
	\Phi_{(G, \weight)}(S) 
	= \frac{1}{2} \frac{\sum_{e \in E(S, S^c)} \weight(e)}{\sum_{e \in E(S,V) } \weight(e)},
\end{equation}
where $E(A,B)$ denotes the set of edges between vertex sets $A$ and $B$. The factor $1/2$ comes from the fact that transition kernel $q(\cdot,\cdot)$ in \eqref{D:bottleneck} is that of the lazy random walk. %\lu?ca{TODO: double-check constants coming from 1/2 factor... In toy model check the factor 1/2...}

Given $b > 0$, we say a graph $G$ with $n$ vertices is a $b$-{\em expander}\footnote{We note that for bounded-degree graphs sometimes the expansion ratio $|E(S, S^c)| / |S|$ is used instead, see for example \cite{MSS23}.}  if
\begin{equation}
	\frac{|E(S,S^c)|}{|E(S,V)|} \geq b \quad \forall S \text{ with } 1 \leq |S| \leq \frac{n}{2}, \label{eq:expander}
\end{equation}
or equivalently
\begin{equation}\label{eq:expander2}
	|E(S,S^c)| \geq b \min \big\{ |E(S,V)|, |E(S^c,V)| \big\} \quad \forall S \subseteq V\,.
\end{equation}
We remark that in the constant weight case, the ratio in \eqref{eq:expander} is, up to a factor of $1/2$, equal to \eqref{eq:PhiGS}. 
% We remark that there are several slightly different definitions of expanders in the literature, for the current work it is easier to deal with the version defined in \eqref{eq:expander}.
We further say that $(G_n)_{n \geq 1}$ is a sequence of $b$-expander graphs if each $G_n$ is a $b$-expander on $n$ vertices.

\begin{definition}[Bottleneck Ratio] \label{D:Isoperi}
	Let $(G, \weight)$ be a finite connected weighted graph, and let $\pi$ be the stationary distribution of the lazy random walk on $(G,\weight)$. The bottleneck ratio of $(G, \weight)$ is defined as 
	\begin{equation*}
		\Phi_{(G, \weight)}  := \min\limits_{0 < \pi(S) \leq 1/2} \Phi_{(G, \weight)}(S).
	\end{equation*}
\end{definition}

Furthermore, we define the (uniform) mixing time of the lazy random walk on $(G, \weight)$ as
\begin{equation*}
	\tmix(G, \weight) := \inf \left\{ t \geq 0 : \max\limits_{u,v \in V} \left| \frac{q_t(u,v)}{\pi(v)} - 1\right| \leq \frac{1}{2}\right\},
\end{equation*}
where we recall that $q_t( \cdot , \cdot )$ denotes the $t$-step transition kernel of the lazy random walk on $(G, \weight)$. There is a strong connection between the mixing time and the bottleneck ratio of a graph, which goes back to the results of \cite{SJ89}. We shall use the following theorem, which also gives bounds on the lazy random walk transition probabilities.

\begin{theorem} \label{T:HeatCheeger}
	Let $(G,\weight)$ be a finite connected weighted graph with $G=(V,E)$. Then for any $t \geq 1$
	\begin{equation*}
		q_t(u,v) \leq \frac{\pi(v)}{\pi_{\min}}\exp\Big(-\frac{\Phi_{(G,\weight)}^2}{2} t \Big)  + \pi(v),
	\end{equation*}
	and
	\begin{equation*}
		\tmix(G,\weight) \leq \frac{2}{\Phi_{(G,\weight)}^2} \log(2 / \pi_{\min})\,.
	\end{equation*}
\end{theorem}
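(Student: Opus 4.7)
My approach is the classical spectral/Cheeger argument. The first ingredient is reversibility: the lazy random walk on $(G,\weight)$ satisfies $\pi(u) q(u,v) = \pi(v) q(v,u)$, so the transition operator $Q$ is self-adjoint on $L^{2}(\pi)$ and admits real eigenvalues $1 = \lambda_{1} \geq \lambda_{2} \geq \cdots \geq \lambda_{n} \geq 0$ together with an orthonormal eigenbasis $\{f_{i}\}$ in $L^{2}(\pi)$. Nonnegativity of the eigenvalues follows from the laziness (writing $Q = \tfrac{1}{2} I + \tfrac{1}{2} P$ for the underlying stochastic matrix $P$), and the top eigenpair is $(\lambda_{1}, f_{1}) = (1, \mathbf{1})$. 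Expanding gives
\begin{equation*}
\frac{q_{t}(u,v)}{\pi(v)} \;=\; \sum_{i=1}^{n} \lambda_{i}^{\,t}\, f_{i}(u)\, f_{i}(v) \;=\; 1 \;+\; \sum_{i\geq 2} \lambda_{i}^{\,t}\, f_{i}(u)\, f_{i}(v),
\end{equation*}
and applying Cauchy--Schwarz together with the Parseval-type identity $\sum_{i} f_{i}(x)^{2} = 1/\pi(x)$ (which follows by expanding $\delta_{x}/\pi(x)$ in the orthonormal basis) yields the pointwise bound $\bigl|q_{t}(u,v)/\pi(v) - 1\bigr| \leq \lambda_{2}^{\,t}/\sqrt{\pi(u)\pi(v)}$.

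The second ingredient is the discrete Cheeger inequality for reversible lazy chains, $1 - \lambda_{2} \geq \Phi_{(G,\weight)}^{2}/2$, which I will invoke as a black box from the literature (e.g.\ \cite{SJ89}). Together with $1-x\leq e^{-x}$, this yields $\lambda_{2}^{\,t} \leq \exp\bigl(-\Phi_{(G,\weight)}^{2}\, t/2\bigr)$. For the first claim, multiply the pointwise bound by $\pi(v)$ to obtain $q_{t}(u,v) \leq \pi(v) + \sqrt{\pi(v)/\pi(u)}\, \lambda_{2}^{\,t}$, then use the elementary inequality $\sqrt{\pi(v)/\pi(u)} \leq \pi(v)/\pi_{\min}$ (equivalent to $\pi_{\min} \leq \sqrt{\pi(u)\pi(v)}$, which holds since $\pi(u),\pi(v)\geq \pi_{\min}$). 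For the mixing time bound, use instead the cruder estimate $1/\sqrt{\pi(u)\pi(v)} \leq 1/\pi_{\min}$ in the pointwise bound, giving
\begin{equation*}
\max_{u,v}\left|\frac{q_{t}(u,v)}{\pi(v)} - 1\right| \;\leq\; \frac{1}{\pi_{\min}}\exp\!\bigl(-\Phi_{(G,\weight)}^{2}\,t/2\bigr),
\end{equation*}
which is at most $1/2$ as soon as $t \geq (2/\Phi_{(G,\weight)}^{2})\log(2/\pi_{\min})$.

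The only nontrivial input is the discrete Cheeger inequality relating $\lambda_{2}$ to the bottleneck ratio, which I would cite rather than reprove. Everything else is routine linear-algebraic bookkeeping together with the two elementary inequalities between $\pi(u), \pi(v)$ and $\pi_{\min}$ recorded above. If one wished to reprove Cheeger's inequality from scratch, the key step would be the test-function construction that turns a near-minimizer $S$ of $\Phi_{(G,\weight)}(\cdot)$ into a Rayleigh-quotient witness showing $\lambda_{2}$ is close to $1$; this is standard and would add length without adding insight.
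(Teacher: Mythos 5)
Your proposal is correct and follows essentially the same route as the paper: both rest on the spectral bound $\bigl|q_t(u,v)/\pi(v)-1\bigr|\leq \lambda_2^t/\pi_{\min}$ (which the paper simply cites as equation (12.13) of \cite{LP17}, while you rederive it via the eigenfunction expansion and Cauchy--Schwarz, even obtaining the slightly sharper $1/\sqrt{\pi(u)\pi(v)}$) combined with the Cheeger inequality $1-\lambda_2\geq \Phi_{(G,\weight)}^2/2$ for lazy reversible chains. Your observation that laziness makes all eigenvalues nonnegative, so the absolute spectral gap equals $1-\lambda_2$, correctly supplies the detail that the paper's citation-based proof leaves implicit.
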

\begin{proof}
	Equation (12.13) in \cite{LP17} shows that
	\begin{equation*}
		\Big| \frac{q_t(u,v)}{\pi(v)} - 1 \Big| \leq \frac{\e^{-\gamma_* t}}{\pi_{\min}},
	\end{equation*}
	where $\gamma_*$ is the absolute spectral gap of the transition matrix of the random walk. The theorem follows readily by the bound 
	\begin{equation*}
		\frac{\Phi_{(G, \weight)}^2}{2} \leq \gamma_* \leq 2 \Phi_{(G, \weight)},
	\end{equation*}
	for lazy chains, see e.g. \cite[Theorem 13.10]{LP17}.
\end{proof}

\subsection{Sufficient conditions for bounding the diameter} \label{SS:DiamterCondition}

It is a classical fact, see \cite{Sze83}, that the diameter of the UST on $K_n$, the complete graph on $n$ vertices, is of order $n^{1/2}$ with high probability. More recently in \cite{MNS21}, the authors showed that there exist general conditions on the lazy random walk on $G$ that imply that the diameter is of order ${n}^{1/2}$.

The result in \cite{MNS21} involves finite unweighted graphs $G=(V, E)$ with $|V|=n$. Under three conditions on $G$, the authors showed that the UST on $G$ has a diameter of order ${n}^{1/2}$ with high probability. We state here the analogue of their conditions, with parameters $D, \alpha, \theta > 0$, for a weighted graph: %. A weighted graph $(G, \weight)$ is called balanced, mixing and escaping 
%if the following are satisfied
\begin{enumerate}
	\item $(G, \weight)$ is balanced:
	\begin{equation} \label{eq:balanced}
		\frac{\max_{u\in V} \pi(u)}{\min_{u\in V} \pi(u)}= \frac{\max_{u \in V} \sum_{v} \weight(u,v)}{\min_{u \in V} \sum_{v} \weight(u,v)} \leq D;
	\end{equation}
	\item $(G, \weight)$ is mixing:
	\begin{equation} \label{eq:mixing}
		\tmix(G, \weight) \leq n^{\frac{1}{2} - \alpha};
	\end{equation}
	\item $(G, \weight)$ is escaping:
	\begin{equation} \label{eq:escaping}
		\sum_{t=0}^{\tmix(G,\weight)} (t+1) \sup_{v \in V}q_t(v,v) \leq \theta.
	\end{equation}
\end{enumerate}
In \cite{MNS21}, the bound on the diameter of the UST on $G$ was formulated in terms of constants $D, \alpha, \theta$ not depending on $n$. We will make use of the following extension of \cite[Theorem 1.1]{MNS21} that includes weighted graphs and, although we will not need it in this work, also allows $D$ and $\theta$ to depend on $n$. This was formulated in \cite[Theorem 2.3]{MSS23}.

\begin{theorem}\label{T:Nach}
	For any $\alpha>0$, there exist $C, k, \gamma > 0$ such that the following holds. If $(G, \weight)$ is a weighted graph satisfying conditions \eqref{eq:balanced}-\eqref{eq:escaping} for the given $\alpha$ and for some $D,\theta \leq n^{\gamma}$, then for any $\delta > n^{-\gamma}$,
	\be\label{eq:Nach1}
	\bP^\weight_G \Big( (CD \theta \delta^{-1})^{-k} \sqrt{n} \leq {\rm diam}(\cT) \leq (CD \theta \delta^{-1})^{k} \sqrt{n} \Big) \geq 1 - \delta.
	\ee
\end{theorem}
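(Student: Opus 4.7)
The plan is to adapt the strategy of \cite{MNS21} for the unweighted UST to the weighted setting, tracking precisely how the constants depend on the parameters $D$ and $\theta$ (instead of fixing them as $O(1)$ as in \cite{MNS21}). The core idea is to analyze Wilson's algorithm (Section \ref{SS:Wilson}) run from a sequence of starting vertices drawn from the stationary measure $\pi$, and to show that, with high probability, all of the loop-erased random walks (LERWs) produced along the way have length $\Theta(\sqrt{n})$ up to polylogarithmic / polynomial corrections in $D\theta\delta^{-1}$.

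First I would set up the weighted version of Wilson's algorithm. Since $(G,\weight)$ is balanced, the balanced condition \eqref{eq:balanced} gives $\pi(v)\in [1/(Dn), D/n]$ for every $v$, so that sampling a vertex from $\pi$ is comparable to sampling from the uniform distribution up to a factor of $D$. I would run Wilson's algorithm by selecting starting vertices $v_1,v_2,\ldots$ i.i.d.\ from $\pi$, stopping after $m\asymp \sqrt{n}/(D\theta\delta^{-1})^{k_0}$ steps for an appropriate $k_0$. The first goal is to estimate, conditional on the partial tree $T(i-1)$, the typical number of steps the LERW from $v_i$ takes before hitting the vertex set of $T(i-1)$. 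Following the random-walk arguments in \cite[Sec.~3--4]{MNS21}, the mixing condition \eqref{eq:mixing} yields $\tmix=o(n^{1/2})$, so after $\tmix$ steps the walk is close to $\pi$; combined with the balanced condition, the probability that the walk has not yet hit $T(i-1)$ after $t\gg \tmix$ steps decays like $(1-\pi(T(i-1)))^{\lfloor t/\tmix\rfloor}$ up to $D$-factors. This gives a quenched upper bound of order $\sqrt{n}\cdot D/|T(i-1)|$ on the expected random-walk length, and the escape condition \eqref{eq:escaping} then converts this into an upper bound on the LERW length by controlling the ratio between loop-erased length and raw walk length (this is the role of $\theta$ via a second-moment estimate on the self-intersection count, as in Lemma 3.3 of \cite{MNS21}).

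The \textbf{upper bound} on the diameter then follows the union-bound scheme of \cite{MNS21}: after Wilson's algorithm has built a partial tree covering a fixed fraction of the mass, subsequent LERW branches have lengths dominated by geometric-like tails with parameter $\Theta(1/\sqrt{n})$ (up to factors $D,\theta$), so by union-bounding over $n$ starts one concludes that with probability $\geq 1-\delta/2$, every branch has length at most $(CD\theta\delta^{-1})^{k}\sqrt{n}$, which is an upper bound on $\mathrm{diam}(\cT)$ up to a factor $2$. For the \textbf{lower bound}, I would pick two independent vertices $u,v$ from $\pi$ and bound below the length of the LERW from $u$ to a small ball around $v$, using the escape condition to guarantee that the loop erasure retains a definite fraction of the $\Theta(\sqrt{n})$ steps the walk needs to hit such a small target (this is the reverse of the argument above and appears in \cite[Sec.~5]{MNS21}). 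Averaging over $u,v$ then yields $\mathrm{diam}(\cT)\geq (CD\theta\delta^{-1})^{-k}\sqrt{n}$ with probability $\geq 1-\delta/2$.

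The main obstacle is the bookkeeping: in \cite{MNS21} the parameters $D,\theta$ are constants, whereas here we need to allow $D,\theta\leq n^{\gamma}$ and $\delta>n^{-\gamma}$, so one has to track carefully how each intermediate estimate (mixing-time bound, LERW length concentration, self-intersection moment bound) scales in $D$ and $\theta$, and verify that all these bounds combine into a single polynomial factor $(CD\theta\delta^{-1})^{k}$. Choosing $\gamma$ small enough ensures that the polynomial slacks one accumulates remain $o(n^{1/2})$, which is what is needed to close the argument. Since the conclusion and proof scheme mirror \cite[Theorem 2.3]{MSS23}, one can in fact quote that theorem directly, and the present plan is essentially a reminder of why it applies verbatim to the weighted setup defined in Section \ref{SS:ElectricRW}.
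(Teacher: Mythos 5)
Your proposal is correct and ultimately takes the same route as the paper: Theorem \ref{T:Nach} is not proved here but is quoted directly from \cite[Theorem 2.3]{MSS23}, which is exactly the conclusion you reach in your final sentence. Your preceding sketch of how the argument of \cite{MNS21} is adapted to weighted graphs with $D,\theta$ allowed to grow polynomially is a reasonable summary of what that cited proof does, but for the purposes of this paper the citation alone suffices.
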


\subsection{Erdős-R\'enyi random graphs} \label{SS:ERintro}
For the high disorder regime in Theorem \ref{T:main}, we will use a  coupling between the random environment $\omega$ and the following random graphs. % with Erdős-R\'enyi random graphs.
%and we shall require several results about the size of their connected components.
For $p \in [0,1]$, let $G_{n,p}$ be the Erdős-R\'enyi random graph obtained by independently keeping each edge of the complete graph $K_n$ with probability $p$ and deleting it otherwise. If the edge $e$ is kept, we say it is $p$-open; otherwise, we say it is $p$-closed. For two vertices $u$ and $v$, we denote by $\{u \xleftrightarrow{p} v\}$ the event that $u$ and $v$ are connected by a path along which each edge is $p$-open. This gives rise to a set of disjoint connected $p$-clusters $\mathcal{C}_1(p), \mathcal{C}_2(p), \ldots$ each viewed as a subgraph of $K_n$. We order them by decreasing cardinality, i.e.\ $|\mathcal{C}_1(p)| \geq |\mathcal{C}_2(p)| \geq \ldots$, where $|\mathcal{C}_i(p)|$ denotes the number of vertices in the $p$-cluster. %When $|\mathcal{C}_1(p)|\gg |\mathcal{C}_2(p)|$, the cluster $\mathcal{C}_1(p)$ is usually called the {\em giant component}.

We will often consider a parameter of the form $p = (1 + \eps)/n$, where $\eps := \eps(n)$ is a sequence tending to zero with $\eps \geq n^{-1/3}$. For such a choice of $\eps$, the Erdős-R\'enyi random graph is either inside the so-called critical window (if $\eps = O(n^{-1/3})$) or slightly above the critical window (if $\eps \gg n^{-1/3}$). Similarly, for $p = (1- \eps)/n$ the random graph is either in the critical window or slightly below the critical window, depending on whether $\eps = O(n^{-1/3})$ or $\eps \gg n^{-1/3}$.

For a connected graph $G=(V,E)$, the quantity ${\rm Exc }(G) := |E| - |V|$ is sometimes called the \textit{excess} of the graph.  We first recall a bound on the size and excess of the largest component in the (slightly) supercritical regime, which is a direct consequence of Theorem 15 and Theorem 20 of \cite{ABR09}.
\begin{theorem} \label{T:sizeSup}
	Let $p = \frac{1 + \eps}{n}$ with $\eps =\eps(n) = o(1)$. There exists a $K > 1$ such that for $\eps n^{1/3} > K$ and $n$ large enough,
	\begin{equation*}
		\P \Bigg( \big| \mathcal{C}_1(p) \big| \not\in \Big[\frac{3}{2}\eps n, \frac{5}{2} \eps n\Big] \, \,\text{ and }\, \, {\rm Exc } \big( \mathcal{C}_1(p) \big) > 150 \eps^3 n\Bigg) \leq 2 \e^{-\eps n^{1/3}}.
	\end{equation*}
\end{theorem}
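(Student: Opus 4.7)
The plan is to deduce Theorem~\ref{T:sizeSup} directly from Theorems~15 and~20 of \cite{ABR09} via a union bound. The two cited statements are the standard concentration results in the slightly supercritical regime $p=(1+\eps)/n$, $\eps n^{1/3}\to\infty$: the first controls the cardinality $|\mathcal{C}_1(p)|$ of the giant component around its typical value $2\eps n$, and the second controls its excess ${\rm Exc}(\mathcal{C}_1(p))$ around its typical value of order $\eps^3 n$. Since the event displayed in the theorem is (at most) the union of the two bad events, a union bound will produce the factor $2$ on the right-hand side.

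\textbf{Step 1: size of the giant component.} From Theorem~15 of \cite{ABR09} I would extract an exponential tail bound of the form
\begin{equation*}
\P\Bigl(\,\bigl||\mathcal{C}_1(p)|-2\eps n\bigr| > \tfrac{1}{2}\eps n \,\Bigr) \le e^{-\eps n^{1/3}},
\end{equation*}
valid as soon as $\eps n^{1/3}>K_1$ for a sufficiently large absolute constant $K_1>1$. The thresholds $\tfrac{3}{2}\eps n$ and $\tfrac{5}{2}\eps n$ in the statement are chosen with enough slack around $2\eps n$ that this bound follows from the quantitative form of Theorem~15 once $K_1$ is large enough to absorb the numerical constants appearing there.

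\textbf{Step 2: excess of the giant component.} From Theorem~20 of \cite{ABR09} I would analogously extract
\begin{equation*}
\P\Bigl({\rm Exc}(\mathcal{C}_1(p)) > 150\,\eps^3 n\Bigr) \le e^{-\eps n^{1/3}},
\end{equation*}
valid whenever $\eps n^{1/3}>K_2$. The constant $150$ sits far above the typical value $\tfrac{2}{3}\eps^3 n$ of the excess, so again no real work beyond transcription is needed; $K_2$ is chosen to make the $150$ comfortable.

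\textbf{Step 3: combine.} Setting $K:=\max(K_1,K_2)$ and summing the two tail bounds yields the stated conclusion with the factor $2$. The entire argument is a book-keeping exercise; the only mild obstacle I foresee is matching the numerical constants $3/2$, $5/2$, and $150$ to those appearing in the quantitative statements of \cite{ABR09}, which is handled uniformly by inflating $K$ if necessary. No probabilistic argument beyond the quoted results is required.
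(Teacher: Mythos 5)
Your proposal is correct and takes essentially the same route as the paper, which offers no separate proof and simply states the result as a direct consequence of Theorems 15 and 20 of \cite{ABR09} --- exactly the two tail bounds you transcribe and combine with a union bound, absorbing constants by enlarging $K$. Note only that the displayed event is written with ``and'' (an intersection), so your union-bound reading actually establishes the stronger ``or'' version; that is the form the paper really uses later (e.g.\ in the proofs of Theorem \ref{T:size2ndSup} and Theorem \ref{T:highDisorder}), so no harm is done.
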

\noindent We remark that, above the critical window, the typical size of the largest component is $(2+o(1))\eps n$, i.e.\ the constants $3/2$ and $5/2$ are not optimal in the above theorem, but give bounds with high probability.  

Next, consider the following bound on the size of the largest component in the (slightly) subcritical regime.
\begin{theorem}\label{T:sizeSub}
	Let $p = \frac{1 - \eps}{n}$ with $\eps =\eps(n)= o(1)$. There exists a $K > 1$ such that for $\eps n^{1/3} > K$, $n$ large enough and any $k \geq 2$
	\begin{equation}
		\P \Big( \big| \mathcal{C}_1(p) \big| > 2 \big( k +  \log(\eps^3 n) \big) \eps^{-2} \Big) \leq 3 \e^{-k}.
	\end{equation}
\end{theorem}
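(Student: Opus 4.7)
The plan is to combine a sharp tail estimate for a single-vertex cluster with a size-biased first-moment argument. Since any component of size at least $T$ contains at least $T$ vertices whose own component has size at least $T$, Markov's inequality applied to $|\{v : |\mathcal{C}(v)| \geq T\}|$ together with the vertex-transitivity of $K_n$ give
\begin{equation*}
\P\big(|\mathcal{C}_1(p)| \geq T\big) \;\leq\; \frac{n}{T}\, \P\big(|\mathcal{C}(v_0)| \geq T\big)
\end{equation*}
for any fixed vertex $v_0$, thereby reducing the problem to controlling the tail of a single cluster.

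For this single-vertex tail, I would perform a breadth-first exploration of $\mathcal{C}(v_0)$ and stochastically dominate it by the total progeny $\xi$ of a Galton--Watson tree with $\mathrm{Bin}(n,p)$ offspring (the dominating offspring are sampled unconditionally of the graph structure, so cycles in the ER cluster cause no extra difficulty). Otter's cycle-lemma identity,
\begin{equation*}
\P(\xi = j) \;=\; \frac{1}{j}\, \P\big(\mathrm{Bin}(nj,p) = j-1\big),
\end{equation*}
combined with a local-CLT/saddle-point estimate on the right-hand side (the binomial has mean $j(1-\varepsilon)$ and is evaluated at $j-1$; the KL-rate is $-\log(1-\varepsilon) - \varepsilon \geq \varepsilon^2/2$ and the local prefactor is $j^{-1/2}$), yields $\P(\xi = j) \leq C j^{-3/2} e^{-j\varepsilon^2/2}$ for $j$ sufficiently large. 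Summing the resulting geometric-type tail then produces
\begin{equation*}
\P\big(|\mathcal{C}(v_0)| \geq T\big) \;\leq\; \frac{C'}{T^{3/2}\varepsilon^2}\, e^{-T\varepsilon^2/2}.
\end{equation*}

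Substituting $T = 2(k + \log(\varepsilon^3 n))\varepsilon^{-2}$ makes $T\varepsilon^2/2 = k + \log(\varepsilon^3 n)$, so the powers of $\varepsilon$ and $n$ cancel exactly when multiplied by $n/T$, leaving
\begin{equation*}
\P\big(|\mathcal{C}_1(p)| > T\big) \;\leq\; \frac{C''\, e^{-k}}{\big(k + \log(\varepsilon^3 n)\big)^{5/2}}.
\end{equation*}
The hypothesis $\varepsilon n^{1/3} > K$ with $K > 1$ ensures $\log(\varepsilon^3 n) > 0$, so for $k \geq 2$ the denominator is bounded below by an absolute constant; taking $K$ large enough to absorb the constants from Otter's formula and Stirling's approximation delivers the required $3 e^{-k}$.

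The main technical obstacle is the polynomial prefactor $T^{-3/2}$: a direct Chernoff bound on the binomial sum only yields $e^{-T\varepsilon^2/2}$, which after the Markov reduction leaves an extra factor of $1/\varepsilon$ that cannot be absorbed into $3 e^{-k}$ in the regime $\varepsilon = o(1)$. Recovering this polynomial decay is what makes the argument go through, and it genuinely requires Otter's cycle lemma together with a local CLT (equivalently, Stirling-based asymptotics on the Cayley enumeration of tree components, or the explicit Borel distribution for the total progeny of a Poisson Galton--Watson tree after a further domination step).
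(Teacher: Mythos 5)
Your proposal is correct in substance, but note that the paper itself contains no proof of Theorem \ref{T:sizeSub}: it is quoted from \cite{ABR09} (Theorem 19), going back to \cite{Luc90}, so the comparison is with the classical argument rather than with anything internal to the paper. Your route is essentially that classical argument: the size-biased first-moment reduction $\P(|\mathcal{C}_1(p)|>T)\leq \tfrac{n}{T}\,\P(|\mathcal{C}(v_0)|\geq T)$, stochastic domination of the exploration of $\mathcal{C}(v_0)$ by the total progeny of a $\mathrm{Bin}(n,p)$ Galton--Watson process, the Otter--Dwass identity, and a Stirling/local estimate producing the polynomial prefactor $j^{-3/2}$. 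You are right that this prefactor is the crux: a plain Chernoff bound on $\P(\mathrm{Bin}(nj,p)\geq j-1)$ leaves an unabsorbable factor $\varepsilon^{-1}$ after the union step, and recognizing this is what makes the proof work. Two details to spell out in a full write-up: (i) evaluating $\mathrm{Bin}(nj,p)$ at $j-1$ (rather than at $j$, and with binomial rather than Poisson offspring) gives the exponent $j\big(-\log(1-\varepsilon)-\varepsilon\big)$ only up to additive errors of order $1+j\varepsilon^2/n$; these are bounded by absolute constants in the relevant range of $j$ (and for very large $j$ they merely shrink the rate by a factor $1-O(1/n)$), so they only inflate the constant $C''$, which is then absorbed because the hypothesis $\varepsilon n^{1/3}>K$ forces the denominator $(k+\log(\varepsilon^3 n))^{5/2}\geq (2+3\log K)^{5/2}$ to be as large as needed; (ii) uniformity in $k$: for $k$ so large that $T=2(k+\log(\varepsilon^3 n))\varepsilon^{-2}>n$ the probability is trivially zero, and for the remaining $k$ your bound is uniform, so the estimate indeed holds for all $k\geq 2$. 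With these points made explicit, the argument is complete and matches the standard proof underlying the cited results.
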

\noindent The above theorem is part of \cite[Theorem 19]{ABR09}, and goes back to \cite[Lemma 1]{Luc90}. In the slightly subcritical regime considered in Theorem \ref{T:sizeSub}, the size of $\mathcal C_1(p)$ is typically of order $2  \log(\eps^3 n) \eps^{-2}$.\footnote{We note that the typical component size in \cite[Theorem 19]{ABR09} is missing the factor $2$.}

Combining Theorem \ref{T:sizeSup} and Theorem \ref{T:sizeSub} also gives us quantitative bounds on the size (optimal up to a constant) of the 2nd largest cluster in the (slightly) supercritical regime, see also \cite[Theorem 3]{Luc90}. 

\begin{theorem} \label{T:size2ndSup}
	Let $p = \frac{1 + \eps}{n}$ with $\eps = \eps(n)=o(1)$.  There exists a $K > 1$ such that for $\eps n^{1/3} > K$, $n$ large enough and any $k \geq 2$, 
	\begin{equation}
		\P \Big( |\mathcal{C}_2(p)| > 10  \big(k + \log(\eps^3 n) \big)  \eps^{-2} \Big) \leq 3 \e^{-k} + 2\e^{-\eps n^{1/3}} .
	\end{equation}
\end{theorem}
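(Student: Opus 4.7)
The plan is to combine Theorem~\ref{T:sizeSup} (applied to $\mathcal{C}_1(p)$) with Theorem~\ref{T:sizeSub} (applied to the residual graph after removing the giant component). The guiding principle is that, once we reveal $\mathcal{C}_1(p)$, the subgraph of $G_{n,p}$ induced on the complementary vertex set $V \setminus \mathcal{C}_1(p)$ is distributed, essentially, as an Erd\H{o}s-R\'enyi graph $G_{m,p}$ with $m = n - |\mathcal{C}_1(p)|$, and the second largest component $\mathcal{C}_2(p)$ of $G_{n,p}$ is nothing but the largest component of this residual graph.

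First I would apply Theorem~\ref{T:sizeSup} to localize $|\mathcal{C}_1(p)| \in [\tfrac{3}{2}\eps n, \tfrac{5}{2}\eps n]$ on an event $A$ of probability at least $1 - 2 e^{-\eps n^{1/3}}$. On $A$ the residual graph has $m = (1 - \alpha\eps)n$ vertices with $\alpha \in [\tfrac32, \tfrac52]$ and edge parameter $p = (1+\eps)/n$. A short computation gives $mp = 1 - \tilde{\eps}$ with $\tilde{\eps} \in [\eps/2, 2\eps]$ for $\eps$ small, so the residual graph is slightly subcritical. Enlarging the constant $K$ in the statement beyond the one in Theorem~\ref{T:sizeSub} (times a harmless factor) secures $\tilde{\eps}\, m^{1/3} > K$ uniformly on $A$.

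Applying Theorem~\ref{T:sizeSub} to the residual graph then yields, for any $k \geq 2$,
$$
\P \Big( |\mathcal{C}_2(p)| > 2 \big(k + \log(\tilde\eps^{3} m)\big)\, \tilde\eps^{-2} \,\Big|\, A \Big) \leq 3 e^{-k}.
$$
Since $\tilde\eps \in [\eps/2, 2\eps]$ and $m \leq n$, we have $\tilde\eps^{-2} \leq 4\eps^{-2}$ and $\log(\tilde\eps^{3} m) \leq \log 8 + \log(\eps^3 n)$, so the threshold $2(k + \log(\tilde\eps^{3} m))\tilde\eps^{-2}$ is dominated by $10(k + \log(\eps^3 n))\eps^{-2}$, provided $K$ is chosen large enough that $3\log K \leq \log(\eps^3 n)$ absorbs the additive constant $\log 8$. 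The claimed bound then follows by a union bound: $3 e^{-k}$ from this step, plus $2 e^{-\eps n^{1/3}}$ from $A^c$.

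The point that requires genuine care — rather than being a true obstacle — is the reduction in Step~2 to $G_{m,p}$ on the complementary vertex set. Conditional on the vertex set of $\mathcal{C}_1(p)$, the edges inside $V \setminus \mathcal{C}_1(p)$ are not strictly i.i.d.\ $\mathrm{Bernoulli}(p)$: there is an implicit conditioning that every component in $V \setminus \mathcal{C}_1(p)$ has size at most $|\mathcal{C}_1(p)|$. However, this conditioning only \emph{decreases} the probability of having a large residual component, and for $\eps n^{1/3}$ large the typical largest residual component is of order $\log(\eps^3 n)\,\eps^{-2} \ll \eps n \asymp |\mathcal{C}_1(p)|$, so the conditioning event has probability close to one and can be absorbed into the constants. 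Alternatively, one may deduce the same conclusion via the standard exploration argument: running BFS from an arbitrary vertex until the component is exhausted reveals only edges incident to discovered vertices, leaving the edges among the undiscovered vertices as an independent Erd\H{o}s-R\'enyi graph to which Theorem~\ref{T:sizeSub} applies directly.
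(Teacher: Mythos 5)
Your proposal is correct and takes essentially the same route as the paper: localize $|\mathcal{C}_1(p)|$ in $[\tfrac32\eps n,\tfrac52\eps n]$ via Theorem \ref{T:sizeSup}, identify $\mathcal{C}_2(p)$ with the largest component of the residual graph, which is a slightly subcritical Erd\H{o}s--R\'enyi graph conditioned on having no component larger than $|\mathcal{C}_1(p)|$, and apply Theorem \ref{T:sizeSub} after converting $(\tilde\eps,m)$ back to $(\eps,n)$ using the slack in the constant $10$. The only (minor) divergence is how the conditioning is discharged: you assert it can only decrease the probability of a large residual component, which is indeed valid by Harris/FKG since the conditioning event is decreasing and the target event increasing, whereas the paper instead lower-bounds $\P(\mathcal{A}_\ell)\geq 1/2$ via Theorem \ref{T:sizeSub} and absorbs the resulting factor into the constant.
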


\begin{proof}

	Let $\ell \in \mathbb{N}$ with $3\eps n/2 \leq \ell \leq 5 \eps n/2$ and condition on the event $\{|\mathcal{C}_1(p)| = \ell\}$. Consider the random graph $G'$ obtained from $G_{n,p}$ by deleting all vertices in $\mathcal{C}_1(p)$ and any edge that has at least one endpoint in $\mathcal{C}_1(p)$. The graph $G'$ has $n-\ell$ vertices, with $n/2 \leq n-\ell \leq n - 3 \eps n/2$, and is distributed as $G_{n-\ell,p}$ conditioned on the event
	\begin{align*}
		\mathcal{A}_\ell : = \{ \text{the largest component of } G_{n-\ell,p} \text{ contains at most } \ell \text{ vertices} \}\,.
	\end{align*}
	Let $\mathcal{C}_1(G_{n-\ell,p})$ denote the largest component of $G_{n-\ell,p}$. Using Theorem \ref{T:sizeSup}, we have
	\begin{align}\label{jamal}
		&\P \big( |\mathcal{C}_2(p)| > 10 (k + \log(\eps^3 n)) \eps^{-2} \big) \nonumber\\
		&\qquad\qquad \leq \sum_{\ell=3\eps n/2}^{5\eps n/2} \P \Big( |\mathcal{C}_2(p)| > 10  \big(k + \log(\eps^3 n) \big) \eps^{-2} \, \Big| \, |\mathcal{C}_1(p)| = \ell \Big)\P(|\mathcal{C}_1(p)| = \ell) \nonumber\\
		&\qquad\qquad \qquad \qquad + \P\Big(|\mathcal{C}_1(p)| \not\in \big[\frac{3}{2}\eps n, \frac{5}{2} \eps n \big] \Big) \nonumber\\
		&\qquad\qquad\leq \sum_{\ell=3\eps n/2}^{5\eps n/2}  \frac{\P\Big( | \mathcal{C}_1(G_{n-\ell,p})| > 2 \big( k + \log\big( \eps^3_* (n-\ell) \big) \big) \eps^{-2}_* \Big) }{\P(\mathcal{A}_\ell)} \P(|\mathcal{C}_1(p)| = \ell) \nonumber\\
		& \qquad\qquad \qquad \qquad + 2 \e^{-\eps n^{1/3}}
	\end{align}
	with $\eps_*=\eps/2$. Call $n_*:=n-\ell$ and $p_*=\tfrac{1-\eps_*}{n_*}$.
	We  notice that
	\begin{align*}
		p = \frac{1 + \eps}{n} = \frac{1 + \eps}{n-\ell} \frac{n-\ell}{n} \leq  \frac{1 + \eps}{n-\ell} \Big( \frac{n - \frac{3}{2} \eps n}{n} \Big) \leq \frac{1 - \frac{1}{2} \eps}{n-\ell}=p_*.
	\end{align*}
	Hence we can apply Theorem \ref{T:sizeSub} with $\eps_*$, $n_*$ and $p_*\geq p$ to see that 
	\begin{align*}
		\P(\mathcal{A}_\ell ) \geq 1/2\,,
	\end{align*}
	since $\ell \geq 3\eps n/2 \geq 2(2+\log(\eps_*^3 n_*)\eps_*^{-2}$ for $K$ large enough. Applying again Theorem \ref{T:sizeSub}  with $\eps_*$, $n_*$ and $p_*$ to the numerator appearing on the r.h.s.~of \eqref{jamal} concludes the proof.
\end{proof}

Lastly, we need the following theorem about the diameter of the largest component in the critical window.
\begin{theorem}[{\cite[Theorem 1.1]{NP08}}] \label{T:diamCriticalp}
	Let $p = (1 + \lambda n^{-1/3})/n$ with $\lambda \in \R$. Then for any $\delta > 0$ there exists $A = A(\delta, \lambda)$ such that, for all large $n$,
	\begin{equation*}
		\P \Big( {\rm diam}(\mathcal{C}_1(p) ) \in \big[A^{-1} n^{1/3}, A n^{1/3} \big] \Big) \geq 1 - \delta.
	\end{equation*}
\end{theorem}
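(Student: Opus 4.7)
The plan is to derive both bounds via a breadth-first search (BFS) exploration of $\mathcal{C}_1(p)$, exploiting the fact that at the critical value $p = (1 + \lambda n^{-1/3})/n$ the largest component has size of order $n^{2/3}$ and excess of order $1$, so it looks like a ``big tree'' plus a few extra edges. The guiding intuition is the Aldous scaling-limit picture: the exploration walk, centered and rescaled by $n^{-1/3}$ in space and $n^{-1/3}$ in time, converges to a reflected Brownian motion with parabolic drift, and ${\rm diam}(\mathcal{C}_1(p))/n^{1/3}$ converges to the height of the associated Brownian excursion, which is a.s.\ finite and positive.

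For the upper bound ${\rm diam}(\mathcal{C}_1(p)) \leq A n^{1/3}$, I would track BFS from a fixed vertex via the process $(S_k)_{k \geq 0}$ of active vertices and the level profile $(Z_k)$. Writing $S_k = S_{k-1} + Z_k - 1$, the walk $S_k$ is close to a random walk with increments of mean $O(\lambda n^{-1/3})$ and variance $O(1)$; hence with high probability it hits $0$ within $O(n^{1/3})$ steps. Running BFS from each vertex of $\mathcal{C}_1(p)$ and applying a union bound over the $O(n^{2/3})$ starting points then bounds the diameter by $An^{1/3}$ with the required probability, absorbing logarithmic factors into $A$.

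For the lower bound ${\rm diam}(\mathcal{C}_1(p)) \geq A^{-1} n^{1/3}$, the key point is that on the diffusive timescale individual BFS levels cannot be larger than $O(n^{1/3})$. A first-moment computation shows that the expected number of vertices within graph distance $k$ from a fixed root in $G_{n,p}$ is at most $n$ times the probability that BFS reaches level $k$, which in the critical regime grows at most linearly in $k$; consequently, accumulating $|\mathcal{C}_1(p)|$ of order $n^{2/3}$ vertices requires at least $c n^{1/3}$ levels. Combined with the lower bound $|\mathcal{C}_1(p)| \geq c n^{2/3}$ with probability at least $1 - \delta/2$, a standard critical Erdős--R\'enyi estimate, this yields the stated diameter lower bound.

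The main obstacle is that neither $|\mathcal{C}_1(p)|$ nor the BFS level sizes concentrate inside the critical window, so no naive concentration argument works. The clean route is to invoke Aldous' Brownian excursion limit for the full exploration process together with uniform tightness of the heights of excursions; a more hands-on approach, due to Nachmias and Peres, uses martingale inequalities on the BFS walk together with careful moment bounds on the number of long self-avoiding paths in $G_{n,p}$. Both routes are delicate: the second-moment computations for the lower bound must properly account for the strong dependencies between overlapping paths inside $\mathcal{C}_1(p)$, and the upper bound requires tail control of the excursion height rather than just typical behavior.
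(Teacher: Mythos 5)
The paper does not prove this statement at all: it is imported verbatim as Theorem~1.1 of Nachmias--Peres \cite{NP08}, so there is no internal argument to compare against, and a self-contained proof would have to reproduce the substance of that paper. Judged on its own, your sketch has a genuine gap in the upper bound. The exploration walk $S_k$ is indexed by the number of explored \emph{vertices}, not by BFS levels: at criticality it is absorbed at $0$ only after $\Theta(n^{2/3})$ steps (that is the size of $\mathcal{C}_1(p)$), not $O(n^{1/3})$, and its hitting time controls the component \emph{size}, not the number of levels, i.e.\ not the diameter. Converting vertex-time into level-time requires control of the level profile $(Z_k)$, which is exactly the delicate point; moreover your union bound over the $\Theta(n^{2/3})$ possible roots needs a per-root tail bound of order $o(n^{-2/3})$ for surviving $A n^{1/3}$ BFS levels, and the natural critical estimate $\P(\mathrm{rad}(\mathcal{C}(v)) \geq h) \asymp 1/h$ is far from sufficient --- one must use the additional decay in $A$ coming from the parabolic drift/depletion effect, which is the heart of the Nachmias--Peres argument you defer to. Finally, ``absorbing logarithmic factors into $A$'' is not allowed: $A = A(\delta,\lambda)$ must be independent of $n$, and a bound of the form $C n^{1/3}\log n$ does not imply the stated result.

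The lower-bound outline is closer to a proof, but the step ``accumulating $|\mathcal{C}_1(p)|$ of order $n^{2/3}$ vertices requires at least $c n^{1/3}$ levels'' does not follow from the first moment for a fixed root plus a union bound over roots: that route gives an error term of order $n \cdot k /(c n^{2/3}) = k n^{1/3}/c$, which diverges at $k = A^{-1} n^{1/3}$. The standard repair is to count ordered pairs at distance at most $k$: in the critical window $\E\big[\#\{(u,v): d(u,v)\leq k\}\big] = O(nk)$, while the event $\{\mathrm{diam}(\mathcal{C}_1(p))\leq k\}\cap\{|\mathcal{C}_1(p)|\geq c n^{2/3}\}$ forces at least $c^2 n^{4/3}$ such pairs, so Markov's inequality bounds its probability by $O(k n^{-1/3})$, which is small for $k = A^{-1} n^{1/3}$ with $A$ large; combined with the standard bound $\P(|\mathcal{C}_1(p)| \geq c n^{2/3}) \geq 1-\delta/2$ this gives the lower bound. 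With that repair the lower half is sound, but the upper half, as you yourself acknowledge, ultimately requires the quantitative excursion-height/level-size estimates of \cite{NP08}; as written the proposal is a plan rather than a proof.
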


%%%%%%%%%%%%%%%%%%%%%%%%%%%%%%%%%%%%%%%%%%%%%%%%%%%%%%%%%%%%
%%%%%%%%%%%%%%%%%%%% NEW SECTION %%%%%%%%%%%%%%%%%%%%
%%%%%%%%%%%%%%%%%%%%%%%%%%%%%%%%%%%%%%%%%%%%%%%%%%%%%%%%%%%%

%\clearpage

\section{Low disorder regime}  \label{S:Low}

The goal of this section is to prove Theorem \ref{T:main} in the low disorder regime, which we generalize to expander graphs in Theorem \ref{T:GeneralLow}.

\begin{theorem} \label{T:GeneralLow}
	Suppose $G_n = (V_n, E_n)$, with $|V_n|=n$, is a sequence of $b$-expanders (see \eqref{eq:expander}) with minimum degree  $d_{\min} = d_{\min}(n)$, maximum degree $d_{\max} = d_{\max}(n)$, and uniformly bounded ratio $d_{\max}/d_{\min} \leq \Delta$. Let $\cT^\omega_{n, \beta_n}$ be the RSTRE on the weighted graph $(G_n,\weight_n)$ with i.i.d.\ disorder variables $(\omega_e)_{e\in E_n}$ uniformly distributed on $[0,1]$, as in Theorem \ref{T:main}. There exists a constant $C_1 = C_1(b,\Delta)$ such that if $1\leq \beta_n \leq C_1 d_{\min}/ \log n$ and $\delta > 0$, then
	\begin{equation*}
		\widehat \P \left( C_2^{-1} \sqrt{n} \leq {\rm diam}(\cT_{n,\beta_n}^\omega) \leq C_2  \sqrt{n}  \right) \geq 1-\delta,
	\end{equation*}
	for some  $C_2 = C_2(\delta, b, \Delta) > 0$ and $n$ large enough.
\end{theorem}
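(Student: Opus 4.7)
The plan is to apply Theorem \ref{T:Nach} to the weighted graph $(G_n, \weight_n)$ with $\weight_n(e) := \exp(-\beta_n \omega_e)$, by verifying on an event $\Omega^*$ of $\P$-probability at least $1-\delta/2$ that the three conditions \eqref{eq:balanced}--\eqref{eq:escaping} hold with $n$-independent constants $D, \alpha, \theta$; the averaged bound then follows by integrating the quenched bound \eqref{eq:Nach1}. All three conditions are driven by Bernstein-type concentration of edge-weight sums: since $\weight_n(e) \in [e^{-\beta_n}, 1]$ has mean $m_n := (1-e^{-\beta_n})/\beta_n$ (with $m_n \asymp 1/\beta_n$ for $\beta_n \geq 1$) and variance at most $1/(2\beta_n)$, Bernstein applied to $\weight_n(u) = \sum_{v\sim u}\weight_n(u,v)$ yields
\[
\P\bigl(|\weight_n(u) - \deg(u)\, m_n| > \tfrac12 \deg(u)\, m_n\bigr) \leq 2\e^{-c\deg(u)/\beta_n}.
\]
The hypothesis $\beta_n \leq C_1 d_{\min}/\log n$ with $C_1$ small ensures $\deg(u)/\beta_n \gg \log n$, so a union bound over the $n$ vertices shows that with probability $1-o(1)$ every $\weight_n(u)$ lies within a factor $2$ of $\deg(u)\, m_n$. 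Combined with $d_{\max}/d_{\min} \leq \Delta$, this gives \eqref{eq:balanced} with $D := 3\Delta$, and in addition $\pi_{\max}/\pi_{\min} \leq 3\Delta$ and $\pi_{\max} \leq 3\Delta/n$.

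For the bottleneck ratio, the expression \eqref{eq:PhiGS} and the denominator concentration just obtained reduce matters to a uniform lower bound on $\sum_{e\in E(S,S^c)}\weight_n(e)$. On the balancedness event, the condition $\pi(S) \leq 1/2$ translates into $|E(S,V)| \leq C(\Delta)|E|$, so the $b$-expander property \eqref{eq:expander2} delivers $|E(S,S^c)| \geq b' k d_{\min}$ with $k := |S|$ and $b' = b'(b,\Delta) > 0$. A second application of Bernstein gives
\[
\P\Bigl( \sum_{e\in E(S,S^c)}\weight_n(e) < \tfrac12 m_n |E(S,S^c)| \Bigr) \leq \exp\bigl(-c|E(S,S^c)|/\beta_n\bigr) \leq \exp(-c' k \log n),
\]
where $c'$ can be made arbitrarily large by shrinking $C_1$. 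A union bound of size $\binom{n}{k} \leq n^k$ at each level $k$, summed over $1 \leq k \leq n/2$, then produces on an event of probability $1-o(1)$ a uniform lower bound $\Phi_{(G_n,\weight_n)} \geq c_0 = c_0(b,\Delta) > 0$.

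The remaining conditions follow easily from Theorem \ref{T:HeatCheeger}: a constant lower bound on $\Phi$ together with $\pi_{\min} \geq c/n$ gives $\tmix(G_n,\weight_n) = O(\log n)$, which satisfies \eqref{eq:mixing} for any $\alpha < 1/2$; moreover the pointwise heat-kernel bound combined with $\pi_{\max}/\pi_{\min} \leq 3\Delta$ and $\pi_{\max} \leq 3\Delta/n$ yields
\[
\sum_{t=0}^{\tmix}(t+1)\sup_v q_t(v,v) \;\leq\; 3\Delta \sum_{t\geq 0}(t+1)\e^{-c_0^2 t/2} + \pi_{\max}\tmix^2 \;=\; O(1),
\]
which verifies \eqref{eq:escaping} with a constant $\theta$. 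Plugging these estimates into Theorem \ref{T:Nach} on the event $\Omega^*$ delivers the quenched diameter bound, which integrates to the averaged statement. The main obstacle, and the reason the argument does not extend beyond the stated range, lies in the bottleneck step: the Bernstein bound on cut weights must survive a combinatorial union bound of effective size $2^n$ over all $S \subseteq V_n$, which forces $|E(S,S^c)|/\beta_n \gtrsim k\log n$ and thereby the sharp threshold $\beta_n \lesssim d_{\min}/\log n$; Subsection \ref{SS:noconcentration} is advertised as showing that this concentration genuinely fails just above the threshold.
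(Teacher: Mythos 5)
Your proposal is correct and follows essentially the same route as the paper: Bernstein concentration for the weighted degrees (balanced condition) and for cut weights, combined with the $b$-expander property and a union bound over subsets for the bottleneck ratio, then Theorem \ref{T:HeatCheeger} for the mixing and escaping conditions and Theorem \ref{T:Nach} for the diameter, integrated over the good environment event. The only point to tighten is the range of your union bound: under the random weighting a set with $\pi(S)\le 1/2$ may have $|S|>n/2$, so you should either extend the sum to $|S|\le\bigl(1-\tfrac{1}{6\Delta}\bigr)n$ as the paper does, or observe that cut weights are invariant under $S\mapsto S^c$ so that controlling all $|S|\le n/2$ suffices.
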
 

Theorem \ref{T:GeneralLow} immediately implies Theorem \ref{T:main} by noting that its assumptions are satisfied by the complete graph $K_n$ with $\Delta=1$ and, for example, $b=2/3$.

\begin{remark} \label{R:fixed_beta}
	The condition $\beta_n\geq 1$ in Theorem \ref{T:GeneralLow} is only needed to simplify the proof. The theorem holds in fact as soon as $\sup_n \beta_n<\infty$. Indeed, when $\beta_n = O(1)$, the ratio of any two edge weights $\weight_n(e)/\weight_n(e')$ is uniformly bounded from below and above, so that the stationary distribution and bottleneck ratio of $(G_n, \weight_n)$ agree with those of the unweighted graph $G_n$ up to a constant factor. Thus one may easily verify the conditions of Theorem \ref{T:Nach}. 
	Actually, as long as $\beta_n \ll \log n$, the ratio of edge weights is asymptotically $o(n^\eps)$ for any $\eps>0$, and hence the stationary distribution and bottleneck ratio agree up to factors of $n^{o(1)}$ with those of the unweighted graph $G_n$. Applying Theorem \ref{T:Nach}, the bounds on the diameter in Theorem \ref{T:GeneralLow} then hold up to factors of $n^{o(1)}$.
\end{remark}

In view of Theorem \ref{T:Nach}, in order to prove Theorem \ref{T:GeneralLow}, it suffices to show that, for $\beta_n$ growing slowly enough, the three conditions in \eqref{eq:balanced}, \eqref{eq:mixing} and \eqref{eq:escaping} are satisfied for the lazy random walk on $(G_n,\weight_n)$. The conditions will be proved via concentration arguments.  
In Section \ref{SS:expansion_concentrate}, we show concentration of the bottleneck ratio around its mean; in Section \ref{SS:proofLow} we verify the conditions of Theorem \ref{T:Nach} using the bottleneck ratio; lastly, in Section \ref{SS:graphon}, we will prove Proposition \ref{P:GHPbeta}, i.e.\ we show that for fixed $\beta$ the RSTRE converges in Gromov-Hausdorff-Prokhorov distance to Aldous' Brownian CRT.

\smallskip

From now on, we will assume that the weighted graphs $(G_n, \weight_n)$ satisfy the hypotheses of Theorem \ref{T:GeneralLow}, i.e.\ we consider a sequence of $b$-expander graphs with a ratio of maximum to minimum degree of at most $\Delta$.

\subsection{Concentration of stationary distribution and bottleneck ratio} \label{SS:expansion_concentrate}
We will make use of the following concentration inequality (see \cite[Theorem 2.8.4]{Ver18}). %Before there was text overflow...
\begin{theorem}[Bernstein's inequality]\label{T:Bernstein}
	Let $X_1, \ldots, X_m$ be i.i.d.\ bounded random variables with $|X_i| \leq K$, mean $\xi$ and variance $\sigma^2$. Then for $S_m = \sum_{i=1}^m X_i$ and any $\delta > 0$, one has
	\begin{equation*}
		\P(|S_m - m \xi| \geq \delta m) \leq 2 \exp\left(-\frac{m \delta^2}{2 \sigma^2 + 2K\delta/3}\right)\,.
	\end{equation*}
\end{theorem}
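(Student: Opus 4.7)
The plan is to follow the standard Chernoff bounding technique for sums of bounded independent variables. First I would center the random variables, defining $Y_i := X_i - \xi$, so that $\mathbb{E}[Y_i]=0$, $\mathrm{Var}(Y_i)=\sigma^2$, and $|Y_i| \leq 2K$ (the constant $K$ in the theorem can then be recovered by absorbing factors). By symmetry it suffices to bound $\mathbb{P}(S_m - m\xi \geq \delta m)$, since the bound on $\mathbb{P}(S_m - m\xi \leq -\delta m)$ follows identically by replacing $X_i$ with $-X_i$, and the union bound produces the factor of $2$.

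The main step is the exponential Markov (Chernoff) bound: for any $\lambda > 0$,
\begin{equation*}
\mathbb{P}(S_m - m\xi \geq \delta m) \leq e^{-\lambda \delta m} \prod_{i=1}^{m} \mathbb{E}\bigl[e^{\lambda Y_i}\bigr].
\end{equation*}
To control the moment generating function of $Y_i$, I would use a Taylor expansion of $e^{\lambda Y_i} - 1 - \lambda Y_i$ together with the elementary bound $|Y_i|^k \leq (2K)^{k-2} Y_i^2$ valid for all $k \geq 2$. Summing the resulting geometric series, this yields, for $0 < \lambda < 3/(2K)$,
\begin{equation*}
\mathbb{E}\bigl[e^{\lambda Y_i}\bigr] \leq \exp\!\left(\frac{\lambda^2 \sigma^2/2}{1 - 2\lambda K/3}\right).
\end{equation*}

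The last step is to optimize over $\lambda$. Substituting this bound into the Chernoff inequality and choosing $\lambda = \delta/(\sigma^2 + 2K\delta/3)$ (which lies in the admissible range) produces, after routine algebra, an exponent of the form $-m\delta^2/(2\sigma^2 + cK\delta)$ for an explicit constant $c$. Combined with the symmetric tail bound, this gives the stated two-sided inequality. The main obstacle is purely book-keeping: tracking the precise constants through the MGF bound and the optimization so that the final exponent matches $-m\delta^2/(2\sigma^2 + 2K\delta/3)$ as written. Since the theorem is only quoted from \cite{Ver18} and the exact constants are not used in a delicate way in the rest of the paper, a slightly weaker form of the inequality would suffice for all subsequent applications.
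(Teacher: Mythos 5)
The paper does not prove this statement; it is imported verbatim (modulo notation) from \cite[Theorem~2.8.4]{Ver18}, so there is no ``paper's own proof'' to compare against. Your outline is the standard Chernoff-plus-MGF argument that Vershynin himself uses, so methodologically you are on the expected track.

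There is, however, one concrete slip that is worth flagging, because it is not merely bookkeeping. After centering you correctly observe that $|Y_i|\leq 2K$, but then you claim the constant $K$ in the theorem ``can be recovered by absorbing factors.'' That does not work: plugging a bound $|Y_i|\leq 2K$ into Bernstein's MGF lemma and optimizing $\lambda$ yields an exponent of the form $-m\delta^2/(2\sigma^2+4K\delta/3)$, which is strictly weaker than the stated $-m\delta^2/(2\sigma^2+2K\delta/3)$. To obtain the displayed constant one needs $|X_i-\xi|\leq K$, not $|X_i|\leq K$; this is a genuine discrepancy, and in fact the theorem as stated in the paper is slightly imprecise for the same reason. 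It is harmless in context because the paper applies it with $K=1$ to the variables $\weight(e)=\e^{-\beta_n\omega_e}\in[0,1]$, for which one automatically has $|\weight(e)-\xi|\leq\max(\xi,1-\xi)\leq 1=K$, so the sharper bound does hold there. Your closing remark --- that a slightly weaker form of the inequality suffices for every application in the paper --- is correct, and indeed the weaker constant $4K\delta/3$ that your argument actually delivers would be entirely adequate. But as a proof of the statement literally as written, the ``absorbing factors'' step must be replaced either by noting that $\xi$ lies in the range of $X_i$ and the variables are one-sided (so the centered variables are still bounded by $K$), or by explicitly strengthening the hypothesis to $|X_i-\xi|\leq K$.
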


%We first give concentration bounds on the sum of outgoing edge weights. 
We will apply this inequality to $S_m= \sum_{i=1}^m \weight(e_i)$, the sum of $m$ i.i.d.~edge weights, given by $\weight(e) = \weight_{n}(e) = \exp(-\beta_n \omega_e)$, where $\omega_e$ are i.i.d.\ random variables uniformly distributed on $[0,1]$. The probability density function of $\weight(e)$ is given by
\begin{equation*}
	f_{\beta_n}(x) = \frac{1}{\beta_n x} 1_{\{\exp(-\beta_n) \leq x \leq 1\}},
\end{equation*}
with mean 
\begin{equation*}
	\xi = \xi(\beta_n) = \E[\weight(e)] = \frac{1 - \exp(-\beta_n)}{\beta_n}\leq \frac 1 {\beta_n} 
\end{equation*}
and variance 
\begin{equation*}
	\sigma^2 
	= \sigma^2(\beta_n) = \frac{1}{2\beta_n} \Big(1 - \frac{2}{\beta_n} + \e^{-\beta_n} \Big( \frac{4}{\beta_n} - \e^{-\beta_n} - \frac{2 \e^{-\beta_n}}{\beta_n} \Big) \Big) 
	\leq \frac{1}{\beta_n}. % =\frac{(1 - e^{-\beta}) \beta - 2 + \beta e^{-\beta} + 2 e^{-\beta}}{2 \beta^2
	\end{equation*}
	Applying Bernstein's inequality to $S_m$ with $K=1$ and $\delta = \delta_* \xi$ for some $\delta_* \in(0,1]$ gives
	\begin{equation} \label{eq:BernU}
		\P(|S_m - m \xi | \geq \delta_* m \xi) 
		\leq 2 \exp\left(-\frac{m \delta_*^2\xi^2}{2 \sigma^2 + 2\delta_*\xi/3}\right)
		\leq 2 \exp \left( - \frac{\delta_*^2 m}{9  \beta_n}\right)\,,
	\end{equation}
	where, for the second inequality, we  use the bounds  $\xi,\,\sigma^2\leq \beta_n^{-1}$ and $\beta_n\geq 1$ to deduce that
	\begin{equation*}
		%\frac{ \delta^2}{2 \sigma^2 + 2K\delta/3} \geq  
		\frac{ \xi^2}{2\sigma^2 + 2\delta_*\xi/3 }  
		\geq \frac{ \xi^2}{\frac{2}{\beta_n} + \frac 1{\beta_n} } 
		= \frac{ (1-\e^{-\beta_n})^2 }{3\beta_n}
		\geq  \frac{1}{9 \beta_n}\,. 
	\end{equation*}
	
	Now we use inequality \eqref{eq:BernU} to establish that, with high probability, the stationary distribution $\pi$ of the lazy random walk on $(G_n,\weight_n)$ is within a constant multiple of the uniform distribution on the vertex set.
	\begin{lemma} \label{L:WBalanced}
		Assume the same setting as in Theorem \ref{T:GeneralLow}. If $\beta_n \leq \frac{d_{\min}(n)}{72 \log n}$, then
		\begin{equation} \label{eq:stationary}
			\P \Big(  \forall v \in V_n \ : \ \frac{1}{3 \Delta n} \leq \pi(v) \leq \frac{3\Delta}{n} \Big) \xrightarrow{n \rightarrow \infty} 1 \,.
		\end{equation}
	\end{lemma}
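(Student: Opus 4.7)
The plan is to apply the Bernstein inequality \eqref{eq:BernU} to the total incident edge-weight $\weight(v) = \sum_{x \sim v} \weight(v,x)$ at every vertex, and then simply plug into the explicit formula $\pi(v) = \weight(v)/(2\sum_{e\in E_n} \weight(e))$. Since $\weight(v)$ is a sum of $\deg(v)$ i.i.d.\ random variables with mean $\xi$ and $d_{\min}\leq \deg(v)\leq d_{\max}\leq \Delta d_{\min}$, the Bernstein bound \eqref{eq:BernU} applied with $\delta_* = 1/2$ gives
\begin{equation*}
\P\Big( |\weight(v)-\deg(v)\xi| \geq \tfrac12 \deg(v)\xi \Big) \;\leq\; 2\exp\Big(-\frac{\deg(v)}{36\beta_n}\Big) \;\leq\; 2\exp\Big(-\frac{d_{\min}}{36\beta_n}\Big).
\end{equation*}
With $\beta_n \leq d_{\min}/(72\log n)$ the exponent is at most $-2\log n$, so the probability is at most $2n^{-2}$. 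A union bound over the $n$ vertices then yields that, with probability at least $1-2/n$, the two-sided inequality
\begin{equation*}
\tfrac12 \deg(v)\xi \;\leq\; \weight(v) \;\leq\; \tfrac32 \deg(v)\xi \qquad \forall v \in V_n
\end{equation*}
holds simultaneously.

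On this event, summing over $v$ and using the handshake identity $\sum_v \weight(v) = 2\sum_{e\in E_n} \weight(e)$, one also controls the denominator of $\pi(v)$:
\begin{equation*}
\tfrac12 \xi \sum_u \deg(u) \;\leq\; 2\sum_{e\in E_n} \weight(e) \;\leq\; \tfrac32 \xi \sum_u \deg(u).
\end{equation*}
Combining the two displays, the $\xi$'s cancel and one obtains
\begin{equation*}
\frac{1}{3}\cdot\frac{\deg(v)}{\sum_u \deg(u)} \;\leq\; \pi(v) \;\leq\; 3\cdot\frac{\deg(v)}{\sum_u \deg(u)}.
\end{equation*}
Using $\sum_u \deg(u) \in [n d_{\min}, n d_{\max}]$ together with the uniform degree-ratio bound $d_{\max}/d_{\min}\leq \Delta$, the upper ratio is at most $\Delta/n$ and the lower one at least $1/(\Delta n)$, which gives exactly $\pi(v)\in [\frac{1}{3\Delta n}, \frac{3\Delta}{n}]$ for all $v$.

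No step is really an obstacle here: the only ingredient one has to be careful with is the threshold for $\beta_n$, which is dictated by the requirement $\deg(v)/\beta_n \geq 2\log n + \Omega(1)$ needed to make the union bound over vertices work. This is precisely why the hypothesis $\beta_n \leq d_{\min}/(72\log n)$ appears, with the constant $72 = 2\cdot 36$ coming from the $\delta_* = 1/2$ choice in \eqref{eq:BernU}; any constant larger than $2\cdot 36$ would work for the argument, so the statement is tight up to an absolute multiplicative constant in this threshold. The remainder of the proof is an entirely elementary manipulation.
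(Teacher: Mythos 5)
Your proof is correct and follows essentially the same route as the paper: Bernstein's inequality \eqref{eq:BernU} with $\delta_*=1/2$ applied to the weighted degrees $\weight(v)$, a union bound over vertices using $\beta_n \leq d_{\min}/(72\log n)$, and then the explicit formula for $\pi(v)$. The only (harmless) variation is that you control the denominator $2\sum_{e\in E_n}\weight(e)$ by summing the per-vertex bounds via the handshake identity, whereas the paper applies Bernstein once more directly to the total edge weight; both yield the same constants $\frac{1}{3\Delta n}$ and $\frac{3\Delta}{n}$.
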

	
	\begin{proof}
		Let $|E_n| = m$, so that $d_{\min} n/2 \leq m \leq d_{\max} n/2$. By \eqref{eq:BernU} we have that
		\begin{equation} \label{eq:totalWeight}
			\P\Big( \big|\sum_{e \in E_n} \weight(e) - m \xi \big| \geq \frac{m}{2} \xi \Big) \leq 2 \exp \left( - \frac{ m}{36 \beta_n }\right) \longrightarrow 0\,.
		\end{equation}
		For each $v \in V_n$, denote by $d_v$ its degree, and recall the notation $\weight(v) = \sum_{u \sim v} \weight(u,v)$. By a union bound and \eqref{eq:BernU}, we obtain
		\begin{equation*}
			\P\left(\exists v \in V_n : |\weight(v) - d_v \xi | \geq  \frac{d_v \xi}{2} \right) \leq 2 n\exp\Big(- \frac{ d_{\min}}{ 36 \beta_n}\Big).
		\end{equation*}
		If we assume $\beta_n \leq  \frac{d_{\min}}{72 \log n}$, then the above expression is of order $O(1/n)$. In particular, using also \eqref{eq:totalWeight},  with high $\P$-probability it holds that, for all $ v \in V_n$,
		\begin{equation} \label{eq:stationary_bound_up_low}
			\frac{ d_v }{3 d_{\max} n} \leq \frac{d_v}{3 \cdot 2 m} \leq \pi(v) = \frac{\weight(v)}{2 \sum_{e \in E} \weight(e)} \leq \frac{3 d_v}{2m} \leq \frac{3 d_v}{d_{\min} n }\,.
		\end{equation}
		Using the assumption that $d_{\max}/d_{\min} \leq \Delta$ gives the required bounds.
	\end{proof}

	We proceed by similar arguments as above to show that the bottleneck ratio $\Phi_{(G_n,\weight_n)}$, defined in \eqref{eq:PhiGS}, is comparable to the bottleneck ratio of the graph with constant weights. %concentrated around its unweighted version. 
	As $G_n$ is an expander, the weighted version also has good expansion properties.
	
	\begin{lemma} \label{L:CheegerConcentrate}
		Assume the same setting as in Theorem \ref{T:GeneralLow}. There exists a $C_\Phi = C_\Phi(b, \Delta) > 0$ such that if $\beta_n \leq C_\Phi d_{\min}/\log n$, then
		%	\begin{equation*}
			%		\P \Big( \exists S \subseteq V, 0 < \pi(S) \leq \frac{1}{2}, \Phi_{(G_n,\weight_n)}(S) < \frac{b}{72\Delta^2} \Big) \xrightarrow{n \rightarrow \infty} 0.
			%	\end{equation*}
		\begin{equation*}
			\P \Big( \Phi_{(G_n,\weight_n)}\geq  \frac{b}{72\Delta^2} \Big) \xrightarrow{n \rightarrow \infty} 1\,.
		\end{equation*}
	\end{lemma}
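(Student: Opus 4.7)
The strategy is to show, via Bernstein's inequality together with a union bound, that simultaneously for every $S\subseteq V_n$ the numerator $\sum_{e\in E(S,S^c)}\weight(e)$ and the denominator $\sum_{e\in E(S,V)}\weight(e)$ in the definition \eqref{eq:PhiGS} of $\Phi_{(G_n,\weight_n)}(S)$ are close to $|E(S,S^c)|\,\xi$ and $|E(S,V)|\,\xi$ respectively. Once this concentration is established, the expander hypothesis \eqref{eq:expander2} yields the claimed constant lower bound on the bottleneck ratio.

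For a fixed $S$, note that $\sum_{e\in E(S,V)}\weight(e)=\sum_{v\in S}\weight(v)=\sum_{e\in E_n}c_e(S)\weight(e)$, where $c_e(S)\in\{0,1,2\}$ is the number of endpoints of $e$ in $S$; this is a sum of independent variables bounded by $2$, with mean $|E(S,V)|\xi$ and mass $|E(S,V)|\geq d_{\min}|S|$. Applying Bernstein's inequality exactly as in \eqref{eq:BernU}, with a fixed $\delta_\ast$ (say $\delta_\ast=1/4$), yields
\begin{equation*}
\P\Big(\big|\textstyle\sum_{e\in E(S,V)}\weight(e)-|E(S,V)|\xi\big|>\delta_\ast |E(S,V)|\xi\Big)\leq 2\exp\Big(-c\,|E(S,V)|/\beta_n\Big)
\end{equation*}
for some $c=c(\delta_\ast)>0$. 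An identical bound holds for $\sum_{e\in E(S,S^c)}\weight(e)$ with $|E(S,V)|$ replaced by $|E(S,S^c)|$. By the expander property \eqref{eq:expander2} combined with the balanced-degree assumption $d_{\max}/d_{\min}\leq\Delta$, we have $|E(S,S^c)|\geq b\min(|E(S,V)|,|E(S^c,V)|)\geq b\,|E(S,V)|/\Delta$, so both failure probabilities are at most $2\exp(-c'\,|S|\,d_{\min}/(\Delta\beta_n))$ for a constant $c'>0$.

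The key step is the union bound over $S$: there are at most $\binom{n}{s}\leq n^s$ subsets of size $s$, so the total failure probability is dominated by
\begin{equation*}
\sum_{s=1}^{n}\exp\Big(s\log n-c'\,s\,d_{\min}/(\Delta\beta_n)\Big).
\end{equation*}
This tends to $0$ precisely when $d_{\min}/\beta_n$ is sufficiently large compared to $\log n$, which is exactly the assumption $\beta_n\leq C_\Phi d_{\min}/\log n$ with $C_\Phi=C_\Phi(b,\Delta)$ small enough. On the resulting high-probability event, for any $S$ with $\pi(S)\leq 1/2$ one has $\sum_{v\in S}\weight(v)\leq\sum_{v\in S^c}\weight(v)$ which, combined with concentration, forces $|E(S,V)|\leq \frac{1+\delta_\ast}{1-\delta_\ast}|E(S^c,V)|$; hence $\min(|E(S,V)|,|E(S^c,V)|)\geq\frac{1-\delta_\ast}{1+\delta_\ast}|E(S,V)|$, and \eqref{eq:expander2} gives $|E(S,S^c)|\geq b\,\frac{1-\delta_\ast}{1+\delta_\ast}|E(S,V)|$. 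Plugging the concentration bounds into \eqref{eq:PhiGS} yields
\begin{equation*}
\Phi_{(G_n,\weight_n)}(S)\geq\frac{1}{2}\cdot\frac{1-\delta_\ast}{1+\delta_\ast}\cdot\frac{|E(S,S^c)|}{|E(S,V)|}\geq\frac{b}{72\Delta^2}
\end{equation*}
after choosing the numerical constants appropriately (the extra $1/\Delta$ factor, if needed to convert $\pi(S)\leq 1/2$ into a usable size condition, accounts for the $\Delta^2$).

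The main obstacle is the tension between the $2^n$ subsets entering the union bound and the Bernstein tail $\exp(-c|S|d_{\min}/\beta_n)$: singletons force the threshold $\beta_n\lesssim d_{\min}/\log n$, which is exactly the quantitative regime in which concentration can be propagated uniformly over all $S$, and which (by Lemma \ref{L:WBalanced} and the forthcoming Lemma \ref{L:CheegerConcentrate}) ends up controlling the full picture.
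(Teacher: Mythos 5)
Your overall strategy is the same as the paper's: Bernstein's inequality in the form \eqref{eq:BernU}, a union bound over subsets $S$, and the expander property \eqref{eq:expander2} to convert concentration of the weighted edge sums into a constant lower bound on $\Phi_{(G_n,\weight_n)}(S)$. The one genuinely different ingredient is how you dispose of the restriction $\pi(S)\leq 1/2$: the paper first invokes Lemma \ref{L:WBalanced} to turn this into the deterministic size bound $|S|\leq (1-\tfrac{1}{6\Delta})n$ (hence $|S^c|\geq |S|/6\Delta$) and only then runs the union bound, whereas you keep all subsets and deduce $|E(S,V)|\leq \tfrac{1+\delta_*}{1-\delta_*}|E(S^c,V)|$ directly from $\sum_{v\in S}\weight(v)\leq \sum_{v\in S^c}\weight(v)$ plus two-sided concentration. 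That variant is fine in principle (and even gives a slightly better constant), but it makes the union bound more delicate, and this is where your write-up has a genuine flaw.

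The inequality $\min\big(|E(S,V)|,|E(S^c,V)|\big)\geq |E(S,V)|/\Delta$, which you use for \emph{every} $S$ to claim the per-set failure probability $2\exp(-c'|S|d_{\min}/(\Delta\beta_n))$, is false when $|S|\gg \Delta |S^c|$: for $S=V_n\setminus\{v\}$ the minimum is $|E(S^c,V)|=d_v\leq d_{\max}$ while $|E(S,V)|/\Delta$ is of order $n d_{\min}/\Delta$. For such $S$ the concentration of the boundary sum $\sum_{e\in E(S,S^c)}\weight(e)$ only has tail $\exp(-c\,d_{\min}|S^c|/\beta_n)$, i.e.\ exponent proportional to $|S^c|$, not $|S|$, so the displayed bound $\sum_s \exp(s\log n - c's d_{\min}/(\Delta\beta_n))$ is not justified as written for $s$ close to $n$. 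The repair is easy and standard: either index the union bound by $\min(|S|,|S^c|)$, using $\binom{n}{s}=\binom{n}{n-s}\leq n^{\min(s,n-s)}$ against the tail $\exp(-c\,d_{\min}\min(s,n-s)/\beta_n)$, or do as the paper does and first restrict (via Lemma \ref{L:WBalanced}) to $|S|\leq(1-\tfrac{1}{6\Delta})n$, for which $|E(S,S^c)|\geq b\,d_{\min}|S|/(24\Delta^2)$ and the exponent proportional to $|S|$ is legitimate. A second, minor point: your rewriting $\sum_{e\in E(S,V)}\weight(e)=\sum_e c_e(S)\weight(e)$ with $c_e(S)\in\{0,1,2\}$ produces non-identically distributed summands, while Theorem \ref{T:Bernstein} as stated is for i.i.d.\ variables; split off the edges internal to $S$ from the boundary edges (or sum each edge once, as in \eqref{eq:PhiGS}) and apply \eqref{eq:BernU} to each group. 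With these two adjustments your argument goes through and yields the stated bound.
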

	\begin{proof}
		Let $\beta_n \leq C_\Phi d_{\min}/\log n$, where we later choose $C_\Phi$ small enough, and consider $S$ with $0 < \pi(S) \leq \frac{1}{2}$. As seen in the proof of Lemma \ref{L:WBalanced} (see equation \eqref{eq:stationary_bound_up_low}), if $C_\Phi \leq 1/72$, then with high probability $\pi_{\max}:= \max_{v \in V}\pi(v) \leq 3\Delta/n$. In particular, since $\pi(S^c) \geq 1/2$, we must have 
		$
		|S^c| \geq \pi(S^c) \pi_{\max}^{-1} \geq n/6\Delta.
		$ 
		It follows that 
		\begin{equation}\label{eq:sizeS}
			|S| \leq \Big(1-\frac1 {6\Delta}\Big) n \qquad\text{ and }\qquad \frac{|S^c|}{|S|}\geq \frac 1 {6\Delta}\,.
		\end{equation}
		Hence, it suffices to consider $S \subseteq V$ with $1 \leq |S| = r \leq (1-1/6\Delta) n$. 
		
		Recall that $E(A,B)$ denotes the set of edges between the vertex sets $A$ and $B$. Define
		\begin{align*}
			m_1 &:= |E(S,S^c)|\,, \\
			m_2 &:= |E(S,V)|\,,
		\end{align*}
		and note the trivial bounds
		\begin{equation} \label{eq:trivial_m1_m2}
			|E(S^c,V)| \geq \frac{d_{\min} |S^c|}{2} \quad \text{and} \quad d_{\max} |S|  \geq m_2 \geq \frac{d_{\min} |S|}{2}.
		\end{equation} 
		As $G_n$ is a $b$-expander, it follows by \eqref{eq:expander2} that
		\begin{align}
			m_1 \geq b \min \{ |E(S,V)|, |E(S^c,V)| \} &= b m_2 \min \Big\{1, \frac{|E(S^c,V)|}{|E(S,V)|} \Big\} \nonumber \\
			&\geq b m_2 \min \Big\{1, \frac{d_{\min} |S^c|}{2 d_{\max} |S|} \Big\} \geq \frac{b m_2}{12 \Delta^2}\,.\label{eq:m1_bound}
		\end{align}
		%    where the last inequality uses that $|S^c|/|S| \geq 1/6\Delta$.
		Furthermore, the lower bound on $m_2$ in \eqref{eq:trivial_m1_m2} together with \eqref{eq:m1_bound} imply
		\begin{equation} \label{eq:m1_lower}
			m_1 \geq b \frac{d_{\min} |S|}{24 \Delta^2}\,.
		\end{equation}
		
		By the concentration inequality in \eqref{eq:BernU}, we have
		\begin{align*}
			\P\Big( \Big| \sum_{e \in E(S,S^c)} \weight(e) - m_1 \xi \Big| \geq \frac{m_1}{2}\xi\Big) &\leq 2 \exp \big( - \frac{m_1}{36 \beta_n} \big), \\
			\P\Big( \Big| \sum_{e \in E(S,V)} \weight(e) - m_2 \xi \Big| \geq \frac{m_2}{2} \xi\Big) &\leq 2 \exp\big( - \frac{m_2}{36 \beta_n} \big),
		\end{align*}
		so that with high probability (provided that $m_1,\,m_2\gg \beta_n$) %$\beta_n \leq C_\Phi d_{\min}/\log n$)
		\begin{equation*}
			\Phi_{(G_n,\weight_n)}(S) = \frac{1}{2}\frac{\sum_{e \in E(S,S^c)} \weight(e)}{\sum_{e \in E(S,V)} \weight(e)} \geq \frac{1}{2}\frac{\frac{1}{2}m_1 \xi}{\frac{3}{2} m_2 \xi}= \frac{m_1}{6 m_2}\,.
		\end{equation*}
		More precisely, as $m_1 \leq m_2$, we have
		\begin{equation*}
			\P\Big( \Phi_{(G_n,\weight_n)}(S) < \frac{m_1}{6 m_2}\Big) 
			\leq 4 \exp \big( - \frac{m_1}{36 \beta_n} \big)\,.
		\end{equation*}
		
		By Definition \ref{D:bottleneck} of the bottleneck ratio, it holds that
		\begin{align*}
			\Big\{\Phi_{(G_n,\weight_n)}<  \frac{b}{72\Delta^2} \Big\}
			&=\Big\{\exists S \subseteq V_n,\,0<\pi(S)\leq 1/2:\;\Phi_{(G_n,\weight_n)}(S)<  \frac{b}{72\Delta^2} \Big\}\\
			&\subseteq \Big\{\exists S \subseteq V_n,\, 1 \leq |S| \leq \Big(1 - \frac{1}{6\Delta}\Big)n:\, \Phi_{(G_n,\weight_n)}(S) < \frac{m_1}{6 m_2} \Big\}
		\end{align*}
		where the inclusion follows from \eqref{eq:sizeS} and \eqref{eq:m1_bound}.
		To conclude the proof of the lemma, we use a union bound over all subsets $S$ of size $1 \leq r \leq (1-1/6\Delta) n$ and the bound $m_1 \geq brd_{\min}/24 \Delta^2$ from \eqref{eq:m1_lower} to obtain that
		\begin{align}
			\P\Big( \exists S \subseteq V_n,\, 1 \leq |S| \leq \Big(1 - \frac{1}{6\Delta}\Big)n:\, \Phi_{(G_n,\weight_n)}(S) < \frac{m_1}{6 m_2}\Big) \hspace*{-3cm}& \nonumber \\
			&\leq 4 \sum_{r=1}^{(1 - 1/6\Delta)n} \binom{n}{r} \exp \big( -  \frac{m_1}{36 \beta_n} \big) \nonumber \\
			&\leq 4 \sum_{r=1}^{(1 - 1/6\Delta)n} \Big( \frac{\e n}{r} \Big)^r \exp \Big( -  \frac{ b r d_{\min}}{864\Delta^2 \beta_n} \Big) \nonumber \\
			&\leq 4 \sum_{r=1}^{\infty} \Big( \frac{\e n}{r} \Big)^r \exp \Big( - \frac{ b r \log n }{864 \Delta^2 C_\Phi} \Big), \label{eq:pick_Cphi}
		\end{align}
		where the third line uses the upper bound of $(en/r)^r$ on the binomial coefficient. Choosing $C_\Phi$ small enough makes the expression in \eqref{eq:pick_Cphi} go to zero. 
	\end{proof}

	\subsection{Proof of Theorem \ref{T:GeneralLow}} \label{SS:proofLow}
	Using the concentration results from the previous subsection, we may now prove Theorem \ref{T:GeneralLow}.
	
	\begin{proof}[Proof of Theorem \ref{T:GeneralLow}]
		It suffices to verify that the three conditions of Theorem \ref{T:Nach} are satisfied with high probability for a sequence of random weighted graphs as in Theorem \ref{T:GeneralLow}. Let $C_\Phi > 0$ be small enough such that the concentration bounds in Lemma \ref{L:WBalanced} and Lemma \ref{L:CheegerConcentrate} hold, and assume that $\beta_n \leq C_\Phi d_{\min}/ \log n$. Then, with high probability w.r.t.\ the law of $\omega$, the balanced condition in \eqref{eq:balanced} is satisfied by Lemma \ref{L:WBalanced} with $D=9\Delta^2$.
		
		Let $M = b/72 \Delta^2$. By Lemma \ref{L:CheegerConcentrate} we furthermore have  that, with high probability,
		\begin{equation*}
			\Phi_{(G_n, \weight_n)} \geq M\,.
		\end{equation*}
		This, together with Lemma \ref{L:WBalanced}, implies by Theorem \ref{T:HeatCheeger}  that, with high probability,
		\begin{equation*}
			\tmix(G_n, \weight_n) \leq  \frac{2}{M^2} \log( 2/\pi_{\min}) \leq C_{\rm mix} \log n
		\end{equation*}
		for some large enough $C_{\rm mix}$ depending only on $\Delta$ and $b$. The mixing condition in \eqref{eq:mixing} is thus satisfied for any $\alpha < 1/2$. Additionally, 
		\begin{equation*}
			\sup_{v \in V} q_t(v,v) 
			\leq \e^{-\frac{M^2}{2}t} \frac{\pi_{\max}}{\pi_{\min}} + \pi_{\max} 
			\leq 9\Delta^2 \e^{-\frac{M^2}{2}t}  + \frac{3 \Delta}{n}\,.
		\end{equation*}
		Thus, it holds that
		\begin{equation*}
			\sum_{t=0}^{{\tmix(G_n,\weight_n)}} (t+1) \sup_{v \in V}q_t (v,v)\leq \sum_{t=0}^{ C_{\rm mix}\log n} (t+1) \Big( 9\Delta^2 \e^{-\frac{M^2}{2}t} + \frac{3 \Delta}{n} \Big) = O(1),
		\end{equation*}
		so that the escaping condition \eqref{eq:escaping} is also satisfied. By Theorem \ref{T:Nach}, this  concludes the proof of Theorem \ref{T:GeneralLow}.
	\end{proof}

	\subsection{Lack of concentration outside the low disorder regime}\label{SS:noconcentration} 
	Once $\beta_n$ grows much faster than linearly, the concentration bounds in Lemma \ref{L:WBalanced} and Lemma \ref{L:CheegerConcentrate} no longer hold. In fact,  with high probability the balanced condition in Theorem \ref{T:Nach} is no longer satisfied. Indeed, with high probability there exists a vertex $u$ such that $\omega_{(u,v)} \geq \log (n) / 2n$ for all $v \sim u$. It follows that for $n \ll \beta_n \ll n^2$ one has on the one hand $\weight(u)\leq n\exp\{-\beta_n\log n/2n\}$, and on the other hand $2 \sum_{e\in E}\weight(e)\approx n^2 \E[\weight(e)] \approx n^2/\beta_n$, so that
	\begin{equation}
		\pi_{\min} = O \Big( \frac{\beta_n \e^{-\frac{\beta_n \log n}{2n}}}{n} \Big) = o\Big( \frac{1}{n} \Big)
	\end{equation}
	with high probability. Furthermore, consider a coupling of the environment to an Erdős-R\'enyi random graph by keeping edges that have $\omega_e \leq 1/n$. 
	By applying the concentration inequality in Theorem \ref{T:Bernstein} to the weighted edges in $E(\mathcal C_1(1/n),\mathcal C_1(1/n)^c)$ and those in $E(\mathcal C_1(1/n),V_n)$, when $n\log n \ll \beta_n \ll n^{4/3}$ one obtains
	\begin{equation}
		\Phi_{(G_n, \weight_n)}\Big (\mathcal{C}_1 \Big( \frac{1}{n} \Big) \Big ) = \bigO{ \e^{-\frac{\beta_n}{n}}}
	\end{equation}
	with high probability. As a direct consequence, one has that $\Phi_{(G_n, \weight_n)}$ goes to $0$ so fast that the mixing condition in Theorem \ref{T:Nach} no longer holds.
	
	% BEFORE WE HAD THE LACK OF CONCENTRATION WRITEN OUT MORE. 
	
	\subsection{Convergence to the CRT} \label{SS:graphon}
	
	We end this section with the proof of Proposition \ref{P:GHPbeta}, which states that if $\beta$ is a fixed constant as $n$ tends to $\infty$, then the RSTRE converges to Aldous' Brownian CRT.
	
	Graphons arise as continuum limits of sequences of dense graphs and can be encoded as symmetric measurable functions  $f:[0, 1]^2 \to [0,1]$, where, heuristically, $[0, 1]^2$ represents a continuum of vertices, and $f(x, y)$ represents the probability that there is an edge between $x$ and $y$. A Lebesgue-measure preserving transformation $\phi:\,[0,1]\to[0,1]$ can be interpreted as a relabeling of the vertices, and hence $f(x, y)$ and $f(\phi(x), \phi(y))$ will be regarded as the same graphon. The space of graphons is equipped with the {\em cut-distance} $\delta_{\Box}$. %: roughly speaking, the $\delta_{\Box}$ distance between two graphons $G$ and $F$ 
	%, which, roughly speaking, measures the maximum difference between two graphons on subsets of $[0,1]^2$ when one allows a relabelling $\phi$ of the vertices. 
	We refer to \cite[Section 2]{AS24} or to \cite{LS06} for a more formal introduction to graphons.
	
	\smallskip
	
	Proposition \ref{P:GHPbeta} essentially follows from \cite[Theorem 1.1 and Theorem 1.3]{AS24}, which show that the rescaled UST on a sequence of (weighted) dense expanders converges to Aldous' Brownian CRT. To apply the results of \cite{AS24} it suffices to show that the weighted graph $(K_n, \weight_n)$ converges almost surely to a (connected) positive constant graphon with respect to the cut-distance.

	\begin{proof}[Proof of Proposition \ref{P:GHPbeta}]
		% As graphons are easier to work with weights in $[0,1]$, we shall, for this proof only, take weights $\weight_n(e) = \exp(\beta(\omega_n(e) - 1))$, where $\omega_n(e)$ are i.i.d.\ $U([0,1])$ random variables indexed by $E(K_n)$. 
		Let $\weight_n(e) = \exp(-\beta \omega_e)$, $e \in E_n$, be the weights of the edges. As $\weight_n(e) \geq \exp(-\beta) > 0$ and $\beta \geq 0$ does not depend on $n$, in the terminology of \cite{AS24}, almost surely $(K_n,\weight_n)_{n \geq 1}$ forms a  sequence of dense graphs that are $\exp(-\beta)$-expanders\footnote{Notice that the notion of expanders is different from \eqref{eq:expander}, see \cite[Definition 2.4]{AS24}.} with minimal weighted degree $(n-1)\exp(-\beta)$. Theorem 1.3 in \cite{AS24} then implies that for $\P$-almost every $\omega$ there is a sequence $1 \leq \alpha_n=\alpha_n(\omega) \leq \exp(\beta)$ such that 
		\begin{equation} \label{eq:graphon_alpha}
			\big( \cT_{n,\beta}^\omega, \frac{\sqrt{\alpha_n}}{\sqrt{n}} d_{\cT_{n,\beta}^\omega}, \nu_n \big) \xrightarrow{(d)} \big( \cT, d_{\cT}, \nu \big),
		\end{equation}
		with respect to the GHP distance between metric spaces. To prove Proposition  \ref{P:GHPbeta}, it thus suffices to show that $\alpha_n \rightarrow 1$ for $\P$-almost every $\omega$.  This can be achieved, thanks to \cite[Claim 7.4]{AS24} or \cite[Theorem 1.1]{AS24},  by showing that the weighted graph $(K_n, \weight_n)$ converges $\P$-almost surely to a (deterministic) constant connected graphon.
		
		Label the vertices of $V_n$ by $1, \ldots, n$. The random weighted graph $(K_n, \weight_n)$ can be represented by the graphon $M_n = M_n(\omega)$ defined by
		\begin{equation*}
			M_n(x,y) = \weight_n( {\lceil n x \rceil }, {\lceil n y \rceil }) \qquad \forall (x,y) \in [0,1]^2,
		\end{equation*}
		where we use the convention $\weight_n(i,i) = \weight_n(0,i) =\weight_n(i,0) =  0$ for all $i=0,\dots,n$. Furthermore, let $M \equiv \E[\weight_n(e)] = \E[ \exp(- \beta \omega_e)] = (1- \e^{-\beta})/\beta = \xi_\beta$ be a constant (connected) graphon. We wish to show that
		\begin{equation} \label{eq:graphonConv}
			\delta_{\Box}(M_n, M) \xrightarrow{\text{a.s.\ }} 0\,,
		\end{equation}
		which by \cite[Claim 7.4]{AS24} implies that $\alpha_n \rightarrow 1$ almost surely.
		
		To prove \eqref{eq:graphonConv}, consider the intervals $I_j := [(j-1)/n, j/n)$, $j \in \{1, \ldots, n\}$, and for any $U \subseteq [0,1]$ let $I(U) = \{ j \in \{1, \ldots, n\} : U \cap I_j \neq \emptyset\}$, i.e.\ $I(U)$ keeps track of the intervals $I_j$ the set $U$ is contained in. Denote by $\mathcal{B}([0,1])$ the Borel subsets of $[0,1]$, then the cut-distance satisfies
		
		\begin{align}
			\delta_{\Box}(M_n, M) &\leq \sup\limits_{U,V  \in \mathcal{B}([0,1]) } \bigg| \int_{U \times V} \big( M_n(x,y) - M(x,y) \big) dx dy \bigg| \nonumber \\
			&= \sup\limits_{I(U), I(V) \subseteq \{1, \ldots, n\}} \bigg|  \sum_{i \in I(U)} \sum_{j \in I(V)}\int_{I_i \times I_j} \big( M_n(x,y) - M(x,y) \big) dx dy \bigg| \nonumber \\
			&\leq \sup\limits_{I(U), I(V) \subseteq \{1, \ldots, n\}} \frac{1}{n^2} \bigg( \bigg|  \sum_{i \in I(U)} \sum_{\substack{j \in I(V) \\ j \neq i}}\Big( \exp \big(- \beta \omega_{(i,j)} \big) - \xi_\beta \Big) \bigg| + n \xi_\beta \bigg),  \label{eq:Cutgraphon}
		\end{align}
        where in the last step we used the triangle inequality together with the fact that for $x,y$ in the same interval $I_i$, $\forall i \in I(U)$, we have $|M_n(x,y) - M(x,y)| = |M(x,y)| = \xi_\beta$ since $M_n(x,y) = 0$ (there are no self-loops).
		Using Hoeffding's inequality \cite[Theorem 2.2.6]{Ver18} gives that for any $I(U)$, $I(V) \neq \emptyset$ (excluding for the moment the case $I(U) = I(V) = \{i\}$) %not being both the same singleton $\{i\}$ 
		and for any $\eps > 0$
		\begin{align*}
			%&\P\bigg( \delta_{\Box}(M_n, M) \geq \eps n^2 \bigg) \\
			&\P\bigg( \bigg| \sum_{i \in I(U)} \sum_{j \in I(V)} \Big( \weight_n(i,j) - \xi_\beta \Big) \bigg| \geq \eps n^2 \bigg) \\
			&\hspace*{1cm} \leq \P\bigg( \bigg| \sum_{i \in I(U)} \sum_{\substack{j \in I(V) \\ j \neq i}} \Big( \exp \big( -\beta \omega_{(i,j)} \big) - \xi_\beta \Big) \bigg| \geq \eps n\Big(n - \frac{\xi_\beta}{\eps}\Big) \bigg)\\
			&\hspace*{1cm} \leq 2 \exp\Big(-\frac{2 \eps^2 n^2 \big(n - \frac{\xi_\beta}{\eps}\big)^2}{|I(U)| |I(V)| - |I(U) \cap I(V)|}\Big) \leq 2 \exp\Big(-2 \eps^2 \big(n - \frac{\xi_\beta}{\eps}\big)^2\Big)\,.
		\end{align*}
		When $I(U) = I(V) = \{i\}$ for some $i=1,\dots,n$, the probability is just equal to $0$ for $n$ large enough. 
		Using this bound together with a union bound over all possible sets $I(U)$ and $I(V)$, we obtain from \eqref{eq:Cutgraphon} that for $n$ large enough
		\begin{equation*}
			\P \big( \delta_{\Box}(M_n, M)  \geq \eps \big) \leq 2 \cdot 2^{2n} \exp\Big(- 2 \eps^2 \big(n - \frac{\xi_\beta}{\eps}\big)^2\ \Big).
		\end{equation*}
		As this probability is summable in $n$ for any $\eps > 0$, the Borel-Cantelli lemma gives the required convergence in \eqref{eq:graphonConv}.
	\end{proof}

	\begin{remark}
		The above proof depends on the convergence of $(K_n, \weight_n)$ to the constant graphon $M \equiv \xi_\beta$. When $\beta=\beta_n\to\infty$ one has $\xi_\beta \rightarrow 0$, so that the only possible limiting graphon would be the graphon identically equal to $0$. This case is out of the scope of \cite{AS24} for dense graphs and one can no longer use their results.
		However, in the low disorder regime with $\beta_n \leq C n/\log n$ the graphs are still weighted expanders
		in the sense of Lemma \ref{L:CheegerConcentrate}. One could then still expect the limiting object to be Aldous' Brownian CRT.
		When $\beta_n \gg n$, instead,  $(K_n,\weight_n)_{n\geq 1}$ is no longer a  sequence of weighted expanders, see Section \ref{SS:noconcentration}.
	\end{remark}
	
	%%%%%%%%%%%%%%%%%%%%%%%%%%%%%%%%%%%%%%%%%%%%%%%%%%%%%%%%%%%%
	%%%%%%%%%%%%%%%%%%%% NEW SECTION %%%%%%%%%%%%%%%%%%%%
	%%%%%%%%%%%%%%%%%%%%%%%%%%%%%%%%%%%%%%%%%%%%%%%%%%%%%%%%%%%%
	
	%\clearpage
	
	\section{High disorder regime} \label{S:High}

	In this section, we prove the second part of Theorem \ref{T:main}. In Theorem \ref{T:highDisorder} below we restate the results of the high disorder regime of Theorem \ref{T:main}, adding also some information on the expected diameter of the RSTRE. The proof proceeds in several steps. In Section \ref{SS:graph_reduce}, we first introduce a coupling of the random environment $\omega$ to Erdős-R\'enyi random graphs $G_{n,p}$. Then, we present Proposition \ref{P:DiaC1}, the key step in the proof of Theorem \ref{T:main} for the high disorder regime, which states that, up to small correction terms, the diameter of the whole random spanning tree is comparable to the diameter of the minimal subtree containing all vertices in the largest component of $G_{n, p}$ for a slightly supercritical $p$ dependent on $\beta_n$. 
	%whole tree is given by the diameter of the tree restricted to the largest component in $G_{n,p}$ with parameter \mic{$p$} dependent on $\beta_n$. 
	In Sections \ref{SS:gap}, \ref{SS:Diam_sup} and \ref{SS:Connect_outside}, we prove several preliminary lemmas, which we use to prove Proposition \ref{P:DiaC1} in Section \ref{SS:ProofHigh}. Then in Section \ref{SS:Thm_high}, we show how Proposition \ref{P:DiaC1} implies Theorem \ref{T:highDisorder}. Lastly, in Section \ref{SS:veryHigh}, we give a simple alternative proof for the high disorder regime when $\beta_n$ is very large, which also holds for any sequence of finite graphs.

	\begin{theorem} \label{T:highDisorder}
		Let $\cT_{n,\beta_n}^\omega$ be the RSTRE on the complete graph with law $\bP^\omega_{n, \beta_n}$ as in Theorem \ref{T:main}. If $\beta_n \geq n^{4/3} \log n$, then
		\begin{equation*}
			\widehat{\E}[ {\rm diam}(\cT_{n,\beta_n}^\omega)] = \Theta(n^{1/3}).
		\end{equation*}
		Furthermore, for any $\delta > 0$ there exists a constant $C = C(\delta) > 0$ such that, for all $n \in \mathbb{N}$, 
		\begin{equation*}
			\widehat{\P} \big( C^{-1} n^{\frac{1}{3}} \leq {\rm diam}(\cT_{n,\beta_n}^\omega) \leq C n^\frac{1}{3} \big) \geq 1 - \delta.
		\end{equation*}
	\end{theorem}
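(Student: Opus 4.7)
The plan is to apply Proposition \ref{P:DiaC1} with a well-chosen $p_0$ in the critical window of the Erdős--Rényi random graph, and then reduce the statement to already known diameter estimates for $\mathcal{C}_1(p_0)$. The assumption $\beta_n \geq n^{4/3}\log n$ is exactly what makes the critical-window choice work: at this disorder strength, edges with $\omega_e \lesssim 1/n$ have weights $e^{-\beta_n \omega_e}$ dominating those of typical edges by factors of order $\exp(n^{1/3}\log n)$, which forces Wilson's LERW to essentially follow the structure of the Erdős--Rényi random graph $G_{n, p_0}$, as captured by Lemma \ref{L:gap}.

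Concretely, I would pick $p_0 = (1 + \lambda n^{-1/3})/n$ for some fixed $\lambda$ inside the critical window. Theorem \ref{T:diamCriticalp} then gives ${\rm diam}(\mathcal{C}_1(p_0)) = \Theta(n^{1/3})$ with high probability, while Theorem \ref{T:sizeSup} (in a version valid at the edge of the critical window) provides $|\mathcal{C}_1(p_0)| = \Theta(n^{2/3})$ with excess of order $O(1)$. Invoking Proposition \ref{P:DiaC1}, we obtain that with high probability
\[
{\rm diam}(\cT) = {\rm diam}\bigl(\cT_{\mathcal{C}_1(p_0)}\bigr) + O(\operatorname{polylog}(n)),
\]
which reduces the task to a two-sided bound on ${\rm diam}(\cT_{\mathcal{C}_1(p_0)})$.

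For the lower bound, Lemma \ref{L:gap} tells us that the $\cT$-path between any two vertices of $\mathcal{C}_1(p_0)$ uses only edges with $\omega$-values below a constant multiple of $p_0$, so that $\cT_{\mathcal{C}_1(p_0)}$ is a spanning tree of a connected subgraph of $\mathcal{C}_1(p_0)$ containing all its vertices; since a spanning tree's diameter is at least that of its ambient graph, we get ${\rm diam}(\cT_{\mathcal{C}_1(p_0)}) \geq {\rm diam}(\mathcal{C}_1(p_0)) = \Omega(n^{1/3})$. For the upper bound, the $O(1)$ excess of $\mathcal{C}_1(p_0)$ means it differs from a tree by only finitely many edges, and a standard argument shows that any spanning subtree has diameter of the same order as the graph, hence $O(n^{1/3})$.

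To upgrade the high-probability estimate to the moment bound $\widehat{\E}[{\rm diam}(\cT)] = \Theta(n^{1/3})$, I would combine the two-sided high-probability bound with the deterministic estimate ${\rm diam}(\cT) \leq n - 1$: the lower bound on the expectation is immediate as soon as the good event has probability at least $1/2$, whereas the upper bound requires the tail probability of the bad event to decay faster than $n^{-2/3}$, which can be achieved by tracking the exponential error probabilities in the coupling arguments underlying Proposition \ref{P:DiaC1} and Theorem \ref{T:diamCriticalp}. The main obstacle I foresee is producing a unified choice of $p_0$ such that Proposition \ref{P:DiaC1}, the critical-window diameter estimate, and the excess control all hold simultaneously with quantitatively matching error bounds, together with pushing those bounds down to the polynomially small tail decay required to tame the $n - 1$ worst-case contribution in the expectation.
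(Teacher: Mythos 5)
Your overall scheme -- apply Proposition~\ref{P:DiaC1} with $p_0$ inside the critical window of the Erd\H{o}s--R\'enyi graph, then invoke Theorem~\ref{T:diamCriticalp} and excess control -- is the same as the paper's, and the choice $p_0 = (1+\lambda n^{-1/3})/n$ matches (with $\lambda = g_0$). However, your lower bound argument contains a genuine gap. You assert that $\cT_{\mathcal{C}_1(p_0)}$ ``is a spanning tree of a connected subgraph of $\mathcal{C}_1(p_0)$,'' but Lemma~\ref{L:gap} only guarantees that the $\cT$-paths between vertices of $\mathcal{C}_1(p_0)$ are $p_1$-open for some $p_1 > p_0$; these paths can leave $\mathcal{C}_1(p_0)$ both in vertices and in edges. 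So $\cT_{\mathcal{C}_1(p_0)}$ is a spanning tree of a graph $H \subseteq G_{n,p_1}$ that \emph{contains} $\mathcal{C}_1(p_0)$ as a subgraph but may have extra edges and vertices. The general fact ``a spanning tree's diameter is at least the ambient graph's'' then only gives ${\rm diam}(\cT_{\mathcal{C}_1(p_0)}) \geq {\rm diam}(H)$, and since adding edges can only shrink diameters, ${\rm diam}(H)$ could a priori be much smaller than ${\rm diam}(\mathcal{C}_1(p_0))$; the implication ${\rm diam}(\cT_{\mathcal{C}_1(p_0)}) \geq {\rm diam}(\mathcal{C}_1(p_0))$ does not follow.

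What the paper does, and what you need, is to control the \emph{excess} of $H$ and compare spanning trees rather than trees to ambient graphs. On the event $\{ \cC_1(p_0) \subseteq \cC_1(p_1) \text{ and } {\rm Exc}(\cC_1(p_1)) = O(1) \}$, one has ${\rm Exc}(H) = O(1)$, so (by Lemma~\ref{L:ratiodT1T2}) any two spanning trees of $H$ have diameters within a constant factor of each other. One such spanning tree $T$ can be chosen to extend a spanning tree $T^0$ of $\cC_1(p_0)$, so ${\rm diam}(T) \geq {\rm diam}(T^0) \geq {\rm diam}(\cC_1(p_0)) \geq A^{-1} n^{1/3}$ on the event of Theorem~\ref{T:diamCriticalp}, and then ${\rm diam}(\cT_{\mathcal{C}_1(p_0)}) \geq {\rm diam}(T)/(\mathrm{Exc}(H)+2) - 1$ by Lemma~\ref{L:ratiodT1T2}. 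You do mention the $O(1)$ excess, but you invoke it for the \emph{upper} bound; in the paper's argument the excess is the load-bearing ingredient of the \emph{lower} bound, while the upper bound follows more cheaply from the first-moment estimate $\widehat{\E}[{\rm diam}(\cT)] = O(n^{1/3})$ (Remark~\ref{R:Critical-pm} plus equation~\eqref{eq:E_DiaC1}) combined with Markov's inequality. That same moment estimate also sidesteps your final worry about polynomial tail decay: the paper proves the expectation bound directly from the structural decomposition, rather than trying to beat the trivial $n-1$ bound with a vanishing tail of the bad event -- which would in any case be difficult because Theorem~\ref{T:diamCriticalp} does not give a polynomially small failure probability for a fixed constant $A$.
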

	
	A key step in the proof of Theorem \ref{T:highDisorder} will be Proposition \ref{P:DiaC1}, where we prove that the order of the diameter essentially stems from the spanning tree on the largest component in the Erdős-R\'enyi random graph $G_{n,p}$ with parameter $p \geq  1/n + C\log n/\beta_n$.
	
	%\subsection{Coupling with random graphs} \label{SS:CoupleRG}
	\subsection{Reduction via random graphs} \label{SS:graph_reduce}
	%%%%%%%%%%%%%%%%%%%%%%%%% NOTATION %%%%%%%%%%%%%%%%%%
	For many of the proofs, it will be convenient to couple the random environment $\omega$ with the Erdős-R\'enyi random graphs $(G_{n,p})_{p \in [0,1]}$ as follows. Let $p \in [0,1]$, then, for each edge $e$ in $K_n = (V_n, E_n)$, keep $e$ if $\omega_e \leq p$, and call the edge $p$-open, otherwise delete $e$ and call the edge $p$-closed.
	Note that conditioned on $G_{n, p}$, $(\omega_e)_{e\in E_n}$ is still a family of independent random variables with distribution given by the distribution of $\omega_e$ conditioned on whether $\omega_e \leq p$ or $\omega_e > p$. Notice that, since $\weight_n(e)=\exp(-\beta_n\omega_e)$, in this coupling small values of $p$ correspond to high conductances.
	
	\smallskip
	
	Recall from Section \ref{SS:ERintro} that, for two vertices $u,v \in V_n$, we denote by $\{u \xleftrightarrow{p} v\}$ the event that $u$ and $v$ belong to the same connected component in the Erdős-R\'enyi random graph $G_{n,p}$.
	%that $u$ and $v$ are connected by a path along which each edge, coupled to the random environment $\omega$ as above, is $p$-open. In other words, there is a path such that each edge along the path has $\omega_e \leq p$. 
	For a given $p$, we are often interested in the paths of the RSTRE connecting two vertices in the same cluster of $G_{n,p}$, in particular when the cluster is $\mathcal{C}_1(p)$ -- the largest component of $G_{n,p}$. 
	Therefore, for any vertex set $A \subseteq V_n$, and in particular when $A=\mathcal C_1(p)$, we define  the minimal subtree of $\cT=\cT_{n,\beta_n}^\omega$ containing all vertices in $A$ as
	\begin{equation}  \label{eq:defCbar}
		\cT_A \ := \bigcup_{u,v \in A} \gamma_{\cT}(u, v),
	\end{equation}
	where $\gamma_{\cT}(u, v)$ is the unique path in $\cT$ (regarded as a graph with its own vertex and edge set) connecting $u$ and $v$.
	%\end{definition}
	%}

\smallskip

The following
proposition reduces the study of the diameter of the random spanning tree $\cT_{n,\beta_n}^\omega$ on $K_n$ to the diameter of the subtree $\ClCp{1}{p_0}$ for a critical or (slightly) supercritical $p_0$. From now on we will just write $\cT=\cT_{n,\beta_n}^\omega$ to ease the notation.

\begin{proposition} \label{P:DiaC1}
	Let $\cT$ be the RSTRE on the complete graph $(K_n,\weight_n)$ with law $\bP^\omega_{n, \beta_n}$ and $\beta_n\geq n(\log n)^2$. There exists a constant $g_0 \geq 1$ such that for any $n^{-1/3} \leq \eps = \eps(n) \leq (\log n)^{-1}$, with $\beta_n \eps \geq n \log n$, the following holds. Let $p_0 = (1 + g_0 \eps)/n$, then 
	\begin{equation}
		\widehat{\E}[ {\rm diam}(\cT)] = \widehat{\E}\Big[ {\rm diam}\Big( \ClCp{1}{p_0} \Big) \Big] + \bigO{\frac{n^{1/3}}{\sqrt{\eps n^{1/3}}}}\,. \label{eq:E_DiaC1}
	\end{equation}
	Furthermore,
	\begin{equation}
		{\rm diam}\Big(\ClCp{1}{p_0} \Big) \leq  {\rm diam} (\cT ) \leq {\rm diam}\Big( \ClCp{1}{p_0} \Big) + \bigO{\frac{n^{1/3}}{\sqrt{\eps n^{1/3}}} + (\log n)^6} \label{eq:P_DiaC1}
	\end{equation}
	with $\widehat{\P}$ probability at least
	\begin{equation*}
		1 - \bigO{\exp \Big(- \frac{1}{4} \sqrt{g_0 \eps n^{1/3}} \Big)} - \bigO{\frac{1}{n}}. \label{eq:DiaC1_probbound}
	\end{equation*}
\end{proposition}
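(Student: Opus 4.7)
The lower bound in \eqref{eq:P_DiaC1} is immediate: since $\mathcal{C}_1(p_0) \subset V_n$, the minimal subtree $\ClCp{1}{p_0}$ is a subgraph of $\cT$ whose tree-distances agree with $d_\cT$ on pairs in $\mathcal{C}_1(p_0)$, hence ${\rm diam}(\ClCp{1}{p_0}) \leq {\rm diam}(\cT)$. The substantive content is the matching upper bound, which I would obtain via a two-step reduction, first from $\cT$ to $\ClCp{1}{p_m}$ and then from $\ClCp{1}{p_m}$ to $\ClCp{1}{p_0}$, where $p_m$ is the slightly supercritical parameter of Lemma \ref{L:sizepm} producing $|\mathcal{C}_1(p_m)| \asymp n/\log n$.

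For the first reduction, I would sample $\cT$ through Wilson's algorithm after fixing an ordering that enumerates the vertices of $\mathcal{C}_1(p_m)$ first. Then, for every $w \in V_n \setminus \mathcal{C}_1(p_m)$, the tree-distance $d_\cT(w, \mathcal{C}_1(p_m))$ is at most the length of the loop-erased random walk segment that attaches $w$ to the partial tree, which by Lemma \ref{L:fasthitLERW} is $O((\log n)^6)$ uniformly in $w$ with high probability. Hence
\[
{\rm diam}(\cT) \leq {\rm diam}\bigl(\ClCp{1}{p_m}\bigr) + 2\max_{w \in V_n} d_\cT\bigl(w, \mathcal{C}_1(p_m)\bigr) \leq {\rm diam}\bigl(\ClCp{1}{p_m}\bigr) + O\bigl((\log n)^6\bigr).
\]
For the second reduction I would invoke Lemma \ref{L:sizepm} to bound $r := \max_{u \in \mathcal{C}_1(p_m)} d_\cT(u, \mathcal{C}_1(p_0))$ by $O(n^{1/3}/\sqrt{\varepsilon n^{1/3}})$ with the quantitative failure probability stated in the proposition; Lemma \ref{L:gap} is the key input, as it guarantees that the tree-paths joining $p_0$-clusters use only edges of sufficiently large conductance, so that probabilistic estimates transfer cleanly between the $p_m$- and $p_0$-configurations. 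Writing, for $u,v \in \mathcal{C}_1(p_m)$,
\[
d_\cT(u, v) \leq d_\cT(u, \mathcal{C}_1(p_0)) + {\rm diam}\bigl(\ClCp{1}{p_0}\bigr) + d_\cT(v, \mathcal{C}_1(p_0)) \leq {\rm diam}\bigl(\ClCp{1}{p_0}\bigr) + 2r
\]
and combining with the previous display yields \eqref{eq:P_DiaC1}.

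The expectation identity \eqref{eq:E_DiaC1} then follows by integrating the high-probability inequality against $\widehat{\P}$, using the deterministic bound ${\rm diam}(\cT) \leq n-1$ on the complementary event. Since the failure probability is $O(\exp(-\tfrac14 \sqrt{g_0 \varepsilon n^{1/3}})) + O(1/n)$, its contribution is $O(1)$ from the $O(1/n)$ term and exponentially small from the other, both absorbed into the $O(n^{1/3}/\sqrt{\varepsilon n^{1/3}})$ correction whenever $\varepsilon n^{1/3} \geq 1$.

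The main obstacle will be the bound on $r$ underlying Lemma \ref{L:sizepm}. Heuristically, vertices in $\mathcal{C}_1(p_m) \setminus \mathcal{C}_1(p_0)$ sit in small $p_0$-components that merge into the $p_0$-giant when the retention probability is raised from $p_0$ to $p_m$, so the tree-paths from such vertices to $\mathcal{C}_1(p_0)$ should live on these short intermediate components. Turning this picture into a quantitative bound matching the stated $n^{1/3}/\sqrt{\varepsilon n^{1/3}}$ error requires combining the slightly subcritical cluster-size estimate of Theorem \ref{T:sizeSub} with the spatial Markov property (Lemma \ref{L:USTmarkov}) applied after contracting $\mathcal{C}_1(p_0)$, and a further application of Wilson's algorithm on the resulting weighted graph controlled via Lemma \ref{L:gap}, so that "small component size" can be translated into "short tree-distance inside $\cT$" uniformly over $\mathcal{C}_1(p_m)$.
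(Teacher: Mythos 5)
Your two-step reduction $\cT \to \ClCp{1}{p_m} \to \ClCp{1}{p_0}$ matches the paper's strategy, and the first reduction via Wilson's algorithm and Lemma \ref{L:fasthitLERW} is essentially the paper's. But there is a genuine gap in your second reduction, in two places. First, Lemma \ref{L:sizepm} as stated does not bound $r := \max_{u \in \mathcal{C}_1(p_m)} d_\cT(u, \mathcal{C}_1(p_0))$; it only bounds the \emph{expectation} of the diameter difference $\widehat{\E}[{\rm diam}(\ClCp{1}{p_m})] - \widehat{\E}[{\rm diam}(\ClCp{1}{p_0})]$, which does not immediately give a pointwise (or even high-probability) bound on $r$. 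What the paper actually does for the high-probability statement \eqref{eq:P_DiaC1} is to re-open the proof of Lemma \ref{L:sizepm}, restrict to the event $F \cap \{i^*=0\} \cap \text{Bad}^c$, and telescope the diameter differences $D_{i+1}-D_i \leq 2\ell(\ClCp{1}{p_{i+1}} \setminus \ClCp{1}{p_i}) + 2$ via Lemma \ref{L:diamLP}; merely citing the lemma's conclusion is not enough.

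Second, and more seriously, the quantitative ingredient you propose for bounding $r$ is too weak. You want to use the subcritical cluster-size estimate Theorem \ref{T:sizeSub}, which gives bounds of order $\eps^{-2}\log(\eps^3 n)$. At the endpoint $\eps = n^{-1/3}$ (the case actually used to prove Theorem \ref{T:highDisorder}), this is of order $n^{2/3}$, while the target error is $n^{1/3}/\sqrt{\eps n^{1/3}} = n^{1/3}$, so you are off by a polynomial factor. The correct input is a \emph{longest-path} estimate, not a cluster-size estimate: the paper's event $B(i)$ in \eqref{eq:good2} controls $\ell(G_{n,p_{i+2}} \setminus \mathcal{C}_1(p_i)) \leq n^{1/3}/\sqrt{g_i\eps n^{1/3}}$, coming from \cite[Lemma 3]{ABR09}. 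Small components of a near-critical random graph have diameter much smaller than their vertex count (order $n^{1/3}$ versus $n^{2/3}$ in the critical window), and it is the path length that matters here. Your picture of ``small $p_0$-components merging into the giant'' is sound heuristically, but to translate it into the stated error term you must replace Theorem \ref{T:sizeSub} by the path-length bound from \cite[Lemma 3]{ABR09} and then run the geometrically-decaying telescope over the scales $p_0 < p_1 < \cdots < p_m$, controlling the $F(i)$ events via Lemma \ref{L:gap} at each step exactly as in the proof of Lemma \ref{L:sizepm}.
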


In Section \ref{SS:ProofHigh}, we will prove Theorem \ref{T:highDisorder} by applying Proposition \ref{P:DiaC1} with $\eps=n^{-1/3}$ and using the fact that when $p_0$ is in the critical window, the largest component $\cC_1(p_0)$ of the Erdős-R\'enyi  random graph is almost a tree and its diameter is known to be of order $n^{1/3}$. Notice that when $\beta_n=n^{1+\gamma}$ for some $0< \gamma< 1/3$ we can choose any $n^{-\gamma}\log n\leq \eps\leq  1/\log n$ in Proposition \ref{P:DiaC1}. This can be used to reduce the problem in the intermediate regime, see Section \ref{S:Conclusion}.

The proof of Proposition \ref{P:DiaC1} will follow a similar strategy to the proof of Theorem 1 in \cite{ABR09}, which shows that the diameter of the minimum spanning tree on $(K_n, \weight_n)$ is of order $n^{1/3}$, and we refer to Remark \ref{R:MSTdiff} for a brief discussion on the differences between the two proofs. 
We will split the proof into the following three intermediate steps, which allow us to pass from the diameter of $\cT_{\mathcal C_1(p_0)}$ to that of $\cT$:
%\blue{The proof of Proposition \ref{P:DiaC1} will be reduced to the following three intermediate steps:}%Namely, we will show the following three intermediate steps:
\begin{enumerate} \label{page:proof_idea}
	\item Lemma \ref{L:gap} (Section \ref{SS:gap}): For $p < q$, with $q - p$ ``large'', and any $u,v \in V_n$ belonging to the same cluster in $G_{n,p }$, the path $\gamma_\cT(u,v)$ in the RSTRE between $u$ and $v$ typically uses only $q$-open edges. In particular, when $p=p_0$ and $q=p_1$ with $p_1-p_0\geq 6\log n/\beta_n$, one has that $\ClCp{1}{p_0}$ is contained in a connected component of  $G_{n,p_1}$ with high $\widehat\P$ probability. 
	\label{enu:gap}
	
	\item Lemma \ref{L:sizepm} (Section \ref{SS:Diam_sup}): For $p_m$ chosen such that $|\mathcal{C}_1(p_m)|$ is of order $n/\log n$, the diameter of $\ClCp{1}{p_m}$ is comparable to the diameter of $\ClCp{1}{p_0}$ with $p_0$ chosen as in Proposition \ref{P:DiaC1}.
	This is achieved via a telescopic procedure: we first choose a sequence  $p_0< p_1<\dots< p_m$ with $p_{i+1}-p_{i}\geq 6\log n/\beta_n$ (allowing us to apply Lemma \ref{L:gap}) and then control the difference between the diameters of $\ClCp{1}{p_i}$ and $\ClCp{1}{p_{i+1}}$. %This requires  thanks to Lemma  \ref{L:gap}. 
	This is where we need the assumption $\beta_n\eps\geq n\log n$ appearing in Proposition \ref{P:DiaC1}.
	
	\item Lemma \ref{L:fasthitLERW} (Section \ref{SS:Connect_outside}): The LERW started at vertices in $V_n \setminus \mathcal{C}_1(p_m)$ ``quickly'' hits $\mathcal{C}_1(p_m)$ with high probability, which by Wilson's algorithm ensures that the diameter of $\cT$ remains of the same order as $\ClCp{1}{p_m}$.
\end{enumerate}
See Figure \ref{fig:proofidea} for an illustration of the main ideas.

\begin{figure}[tb]
	\centering
	\includegraphics[width=8.5cm]{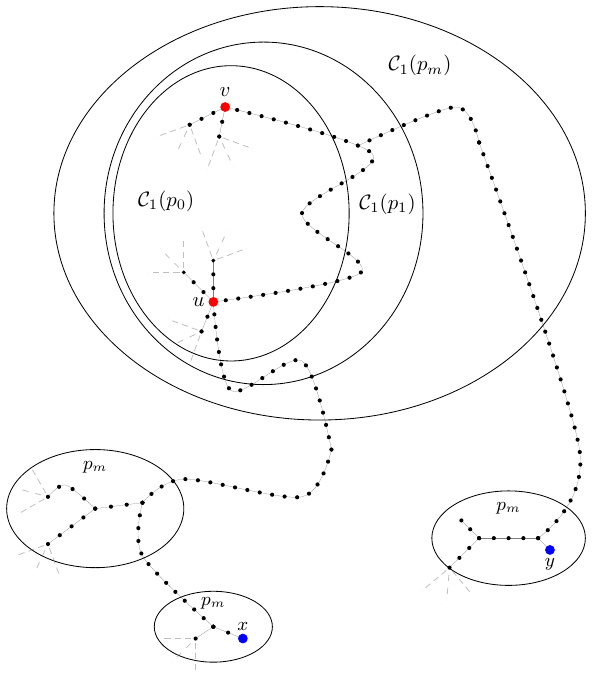}
	\caption{Typical vertices $x$ and $y$ are likely to be connected in $\cT$ via a short path from $x$ to $\ClCp{1}{p_m}$, a path inside $\ClCp{1}{p_m}$ and another short path from $\ClCp{1}{p_m}$ to $y$. }
	\label{fig:proofidea}
\end{figure}

%%%%%%%%%%%%%%%%%%%%%%%

\subsection{Connections inside \texorpdfstring{$p$}{p}-clusters} \label{SS:gap}

%%%%%%%%%%%%%% Step 1 %%%%%%%%%%%%%%%%%%
We start by proving step \eqref{enu:gap}, namely that for any choice of $p < q$ with $\beta_n(q - p) \geq 6 \log n$, the event $\{\ClCp{1}{p}$ is contained in a connected component of $ G_{n,q} \}$ occurs with high probability. In other words, any two points in the same cluster of $G_{n,p}$ are connected in the RSTRE via a path using only edges with $\omega_e \leq q$.

\begin{lemma} \label{L:gap}
	Given $p < q \in [0,1]$, define for $u,v \in V_n$ the event
	\begin{equation*}
		F(u,v) := \big\{ u \xleftrightarrow{p} v, \,\gamma_{\cT}(u,v) \text{ contains an edge $e$ with }  \omega_e > q \big\}\,,
	\end{equation*}
	where $\gamma_{\cT}(u, v)$ is the unique path in $\cT$ connecting $u$ and $v$.
	Then
	\begin{equation}
		\widehat{\P} \bigg( \bigcup_{u,v \in V_n, u \neq v} F(u,v) \bigg) 
		\leq n^5 \e^{-\beta_n (q - p)} \,.%\blue{ \sout{ =: n^5 g(p_0,p_1).}} 
		\label{eq:gapUnion}
	\end{equation}
\end{lemma}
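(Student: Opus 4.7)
The plan is to run a standard edge-swap argument: on the event $F(u,v)$, one can produce from $\cT$ another spanning tree with strictly smaller Hamiltonian by exchanging one heavy edge (with $\omega > q$) for a light edge (with $\omega \leq p$) supplied by the $p$-open connecting path. The resulting exponential gain in Gibbs weight, namely $e^{\beta_n(q-p)}$ per swap, will beat a crude union bound over all pairs of edges.

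Concretely, for each ordered pair of distinct edges $(e,e') \in E_n \times E_n$, I would introduce the event
$$
G(e,e') := \big\{ e \in \cT,\ e' \notin \cT,\ e' \text{ reconnects the two components of } \cT \setminus \{e\},\ \omega_e > q,\ \omega_{e'} \leq p \big\}.
$$
The first step is the event-inclusion
$$
\bigcup_{u \neq v} F(u,v) \ \subseteq \ \bigcup_{(e,e')} G(e,e').
$$
Indeed, on $F(u,v)$, fix any $e^* \in \gamma_\cT(u,v)$ with $\omega_{e^*} > q$ and any $p$-open path $\pi$ from $u$ to $v$. Writing $A \ni u$ and $B \ni v$ for the two components of $\cT \setminus \{e^*\}$, the path $\pi$ must contain at least one edge $f^*$ crossing from $A$ to $B$; this edge satisfies $\omega_{f^*} \leq p$ and $f^* \notin \cT$ (otherwise $A$ and $B$ would remain connected after removing $e^*$). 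Hence $G(e^*,f^*)$ holds.

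The second step is a uniform bound $\widehat{\P}(G(e,e')) \leq e^{-\beta_n(q-p)}$. For this, I fix $\omega$ and consider the swap map $\Psi(T) := T \cup \{e'\} \setminus \{e\}$. On the set $\mathcal{G}_{e,e'} := \{T \in \cT_{K_n} : T \text{ realizes the tree-part of } G(e,e')\}$, $\Psi$ produces a spanning tree (because $e'$ reconnects the two components of $T \setminus \{e\}$) and is injective, with inverse on its image given by $T' \mapsto T' \cup \{e\} \setminus \{e'\}$. On $\{\omega_e > q,\ \omega_{e'} \leq p\}$ one has $H(T,\omega) - H(\Psi(T),\omega) = \omega_e - \omega_{e'} > q-p$, hence by the Gibbs formula \eqref{eq:PomegaT}
$$
\bP^\omega_{n,\beta_n}(T) \leq e^{-\beta_n(q-p)}\, \bP^\omega_{n,\beta_n}(\Psi(T)).
$$
Summing over $T \in \mathcal{G}_{e,e'}$ and using injectivity of $\Psi$ gives $\bP^\omega_{n,\beta_n}(G(e,e')) \leq e^{-\beta_n(q-p)}$ pointwise in $\omega$; averaging over $\omega$ preserves the bound.

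Finally, a union bound over the at most $|E_n|^2 \leq n^4 \leq n^5$ ordered pairs $(e,e')$ yields \eqref{eq:gapUnion}. The argument is essentially mechanical; the only small care points are verifying the inclusion of events and checking that the swap map preserves the spanning-tree property on $\mathcal{G}_{e,e'}$, both of which are immediate from the cut/cycle dichotomy for spanning trees. I do not anticipate any serious obstacle.
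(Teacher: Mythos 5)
Your proof is correct, but it follows a genuinely different route from the paper's. You bound each event $G(e,e')$ by an injective edge-swap on the Gibbs measure: exchanging the heavy edge $e$ (with $\omega_e>q$) for the light reconnecting edge $e'$ (with $\omega_{e'}\leq p$) maps $\mathcal G_{e,e'}$ injectively into the set of spanning trees while increasing the Gibbs weight by at least $\e^{\beta_n(q-p)}$, which immediately gives $\bP^\omega_{n,\beta_n}\big(G(e,e')\big)\leq \e^{-\beta_n(q-p)}$ pointwise in $\omega$; your event inclusion via the cut of $\cT\setminus\{e^*\}$ crossed by the $p$-open path is also sound. The paper instead works pair by pair $(u,v)$ and edge by edge: it bounds $\bP^\omega_{n,\beta_n}(e\in\cT)$ by Kirchhoff's formula $\weight(e)\,\effR{\weight}{u}{v}$, controls the effective resistance by the series law and Rayleigh monotonicity along a $p$-open path (giving the extra factor $n$ for the path length), and handles edges $e\neq(u,v)$ by conditioning on the two arms $\Gamma_u,\Gamma_v$ and contracting them via the spatial Markov property. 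Your argument buys elementarity and a slightly sharper constant (a union over at most $n^4$ ordered edge pairs rather than the paper's $n^5$ from pairs of vertices times edges), and it bypasses the conditioning/contraction step entirely; the paper's route has the advantage of staying within the electrical-network toolbox (Kirchhoff, series law, Rayleigh, spatial Markov property) that is reused elsewhere in the article, but both yield the stated bound \eqref{eq:gapUnion}.
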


\begin{proof}
	The event $F(u,v)$ is the union over all edges $e\in E_n$ of the events
	\begin{equation}
		F(u,v; e) 
		:=  \big\{ u \xleftrightarrow{p} v, \, e \in \gamma_{\cT}(u,v), \, \omega_e > q \big\}
	\end{equation}
	We will first analyze the special case of $e = (u,v)$, and use the formula
	\begin{equation*}
		\bP^{\omega}_{n, \beta_n}(e = (u,v) \in \cT) = \weight(u,v) \effR{\weight}{u}{v}
	\end{equation*}
	in Theorem \ref{T:Kirchhoff} to bound the probability that $e$ is in the tree.
	%where $\weight$ refers to the weights $\weight(f) = \exp(\beta_n \omega_f)$, $f \in E_n$. 
	Condition on the realization of  $G_{n,p}$ (determined by $\omega$ via the coupling) and assume that the events  $\{ u \xleftrightarrow{p} v \}$ and $\{ \omega_e > q\}$ hold. Under $\{ u \xleftrightarrow{p} v \}$ there exists an acylic path $\gamma(u,v)$ connecting $u$ and $v$ using only edges $f$ with $\omega_f \leq p$. The series law (Lemma \ref{L:SeriesLaw}) and Rayleigh's monotonicity principle (Theorem \ref{L:Rayleigh}) give that the effective resistance can be upper bounded by the length of $\gamma(u,v)$ times the maximum resistance along $\gamma(u,v)$. For such $\omega$ we have
	\begin{align}
		\bP^{\omega}_{n, \beta_n}(e \in \gamma_\cT(u,v)) =
		\bP^{\omega}_{n, \beta_n}(e \in \cT) &\leq \e^{- \beta_n q} \effR{\weight}{u}{v} \nonumber  \\
		&\leq \e^{ -\beta_n q} \frac{n}{\e^{-\beta_n p}}  
		= n \e^{-\beta_n(q - p)}\, ,\label{eq:effR_dif}
	\end{align}
	from which it follows that $\widehat{\P}(F(u,v;e)) \leq n \exp(- \beta_n(q - p))$. 
	
	We now consider $F(u,v; e)$ with $e\neq (u, v)$.  %We proceed to the general case of $e = (x,y)$. 
	For $e$ to be in $\gamma_\cT(u,v)$, there must exist (possibly empty) disjoint paths $\Gamma_{u}$ and $\Gamma_{v}$ in $\cT$ connecting $u$, respectively $v$, to either $x$ or $y$. Summing over all possible realizations of $\Gamma_u$ and $\Gamma_v$, we obtain
	\begin{equation*}
		\bP^{\omega}_{n, \beta_n}(e \in \gamma_\cT(u,v)) 
		= \sum_{\Gamma_u, \Gamma_v } \bP^{\omega}_{n, \beta_n} \big( e \in \gamma_\cT(u,v) \mid \Gamma_u, \Gamma_v \subseteq \cT \big) \bP^{\omega}_{n, \beta_n}(\Gamma_u, \Gamma_v \subseteq \cT).
	\end{equation*}
	Consider now the weighted graph $(G', \weight)$ obtained by contracting the paths $\Gamma_u$ and $\Gamma_v$ into two single vertices as in Section \ref{SS:Wilson}. By the spatial Markov property (Lemma \ref{L:USTmarkov}), conditioned on $\{ \Gamma_u, \Gamma_v \subseteq \cT \}$, the probability that $e \in \gamma_\cT(u,v)$ is equal to the probability that $e$ is in the (weighted) UST on $(G', \weight')$. 
	%By the spatial Markov property (Lemma \ref{L:USTmarkov}), conditioned on $\{ \Gamma_u, \Gamma_v \subseteq \cT \}$ \blue{the probability that $e \in \gamma_\cT(u,v)$ is equal to the probability that $e$ is }we may obtain a new graph $G / \{\Gamma_u \cup \Gamma_v\}$, with the paths $\Gamma_u$ and $\Gamma_v$ contracted \mic{into two single vertices}, where the weighted UST law on $G / \{\Gamma_u \cup \Gamma_v\}$ agrees with the conditional law of $\bP^{\omega}_{n, \beta_n}$ on edges in $E_n \setminus \{\Gamma_u \cup \Gamma_v\}$. 
	See also Figure \ref{fig:gap}.
	
	In the graph $(G', \weight)$ the vertices $u$ and $v$ are still connected by a $p$-open path of length at most $n$, and furthermore, $e$ is now an edge between $u$ and $v$. The same arguments as in the bound  for the special case of $e=(u,v)$ now apply. Thus, we obtain the same upper bound as in \eqref{eq:effR_dif}.
	
	Using several union bounds and \eqref{eq:effR_dif} then gives 
	\begin{equation*}
		\widehat{\P} \bigg( \bigcup_{\substack{u,v \in V_n \\ u \neq v}} F(u,v) \bigg) \leq \sum_{\substack{u,v \in V_n \\ u \neq v}} \sum_{e \in E_n} \widehat{\P} \big( F(u,v; e) \big)  \leq n^5 \e^{-\beta_n(q - p)} \, ,
	\end{equation*}
	completing the proof.
\end{proof}%

\begin{figure}[t]
	\centering
	\includegraphics[width=13cm]{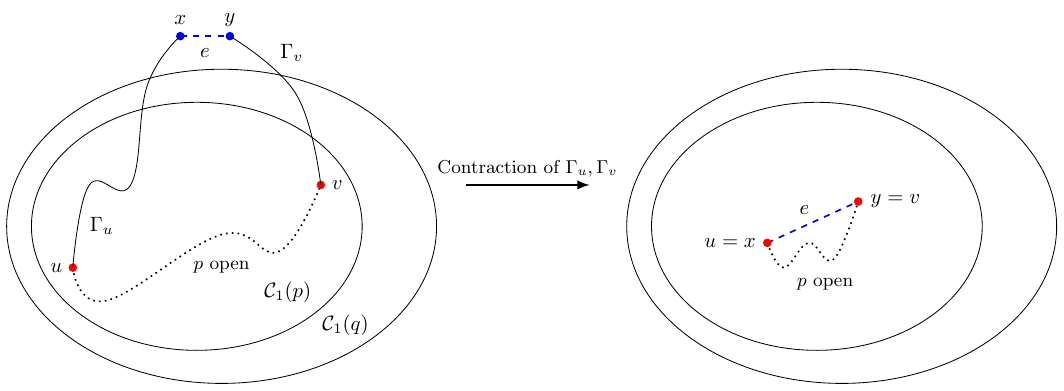}
	\caption{The path $\gamma_\cT(u,v)$ can be decomposed into a path $\Gamma_u$, an edge $e$ and another path $\Gamma_v$. After the contraction of $\Gamma_u$ and $\Gamma_v$, the edge $e$ becomes an edge between $u$ and $v$ in $(G', \weight)$.} 
	\label{fig:gap}
\end{figure}

\subsection{Diameter of the supercritical component} \label{SS:Diam_sup}
%%%%%%%%%%%%%%%%%%%%%%%%%%%%% STEP 2 %%%%%%%%%%%%%%%%%%%%%%%%%%%%%

The next step is to show that for large enough $p_m$, such that $|\mathcal C_1(p_m)|$ is of order $n/\log n$, we show that the diameter of $\ClCp{1}{p_m}$ (recall \eqref{eq:defCbar}) is very close to that of $\ClCp{1}{p_0}$. The proof strategy takes inspiration from \cite{ABR09}. Recall that $\eps = \eps(n)$ is a function going to zero but such that $\eps \geq n^{-1/3}$. For some large $g_0 \geq 1$, specified later in the proofs, we consider the sequences
\begin{equation} \label{eq:g_seq}
	\begin{aligned}
		g_i &= (5/4)^{i/2} g_0, \\
		p_i &= \frac{1 + g_i \eps}{n}, %\label{eq:p_seq}
	\end{aligned}
\end{equation}

and we let $m$ be the integer satisfying  
\begin{equation}\label{eq:definitionm}
	m = m(n) = \min \big\{ i \geq 0 \, : \, g_i \eps \geq \frac{1}{\log n} \big\}.
\end{equation}
Note that $m = O(\log n)$, and by Theorem \ref{T:sizeSup} the size of $\mathcal{C}_1(p_m)$ is of order $n/ \log n$. We will consider the sequence of random graphs $G_{n,p_i}$, $0 \leq i \leq m$, coupled to the same random environment $\omega$ as in Subsection \ref{SS:graph_reduce}.

\begin{lemma} \label{L:sizepm}
	Let $\eps \geq n^{-1/3}$ with $\beta_n\eps \geq n \log n$. Then, for some $g_0$ large enough and $p_0$ and $p_m$ defined in \eqref{eq:g_seq} and \eqref{eq:definitionm}, we have% and \eqref{eq:definitionm} .
	\begin{equation*}
		\widehat{\E} \Big[ {\rm diam} \big( \ClCp{1}{p_m} \big) \Big] \leq \widehat{\E} \Big[ {\rm diam} \big( \ClCp{1}{p_0} \big) \Big] +  \bigO{\frac{n^{1/3}}{\sqrt{\eps n^{1/3}}}}\,.
	\end{equation*}
\end{lemma}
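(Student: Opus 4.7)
The plan is to telescope across the thresholds $p_0 < p_1 < \cdots < p_m$. Set $D_i := {\rm diam}(\cT_{\mathcal C_1(p_i)})$. The monotone coupling of Erdős–Rényi graphs gives $\mathcal C_1(p_i) \subseteq \mathcal C_1(p_{i+1})$, and hence $\cT_{\mathcal C_1(p_i)} \subseteq \cT_{\mathcal C_1(p_{i+1})}$ as subtrees of $\cT$. Setting
\[
\Delta_i \;:=\; \max_{w \in \mathcal C_1(p_{i+1})} d_\cT\big(w,\,\cT_{\mathcal C_1(p_i)}\big),
\]
a short case analysis on whether a $\cT$-geodesic between two vertices of $\mathcal C_1(p_{i+1})$ crosses $\cT_{\mathcal C_1(p_i)}$ or hangs off a single root of this subtree yields the deterministic bound $D_{i+1} \leq D_i + 2\Delta_i$. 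Summing, the lemma reduces to showing that $\sum_{i=0}^{m-1} \widehat\E[\Delta_i] = \bigO{n^{1/3}/\sqrt{\eps n^{1/3}}}$.

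Next, I would apply Lemma~\ref{L:gap} at each level with $p=p_i$, $q=p_{i+1}$. Since $p_{i+1}-p_i = ((5/4)^{1/2}-1)\,g_i\,\eps/n$ and $g_i\geq g_0$, the hypothesis $\beta_n\eps \geq n\log n$ ensures $\beta_n(p_{i+1}-p_i) \geq 6\log n$ once $g_0$ is chosen large enough. A union bound over the $m = O(\log n)$ levels then gives, with $\widehat\P$-probability at least $1 - O(n^{-1})$, that every $\cT$-path between two $p_i$-connected vertices uses only $p_{i+1}$-open edges. In particular, $\cT_{\mathcal C_1(p_i)}$ lives inside the $p_{i+1}$-open subgraph, so the ``new branches'' in $\cT_{\mathcal C_1(p_{i+1})}\setminus \cT_{\mathcal C_1(p_i)}$ are confined to the small $p_i$-clusters that get absorbed into $\mathcal C_1(p_{i+1})$ via the newly $p_{i+1}$-open edges.

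To bound $\Delta_i$, I would combine Theorem~\ref{T:size2ndSup} at level $p_i$ (with $k\asymp \log(g_i^3\eps^3 n)$), yielding $|\mathcal C_2(p_i)|\leq C(\log n)/(g_i\eps)^2$ with probability $1-O(n^{-1})$, with Wilson's algorithm ordered so that the vertices of $\mathcal C_1(p_i)$ are processed first. For each $w\in\mathcal C_1(p_{i+1})\setminus\mathcal C_1(p_i)$ lying in a small $p_i$-cluster $\mathcal C'$, the distance $d_\cT(w,\cT_{\mathcal C_1(p_i)})$ is the length of the LERW from $w$ to the current tree, which is dominated by the hitting time of $\cT_{\mathcal C_1(p_i)}$ by the weighted random walk. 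Because the weights $\exp(-\beta_n\omega_e)$ make edges inside $\mathcal C'$ and the new $p_{i+1}$-edges leaving $\mathcal C'$ heavy, a UST-type diameter estimate in the spirit of Theorem~\ref{T:GeneralLow} on $\mathcal C'$ (exploiting that $\mathcal C'$ is close to a tree, via the excess bound in Theorem~\ref{T:sizeSup}), together with an effective-resistance argument for the hitting time, should yield $\Delta_i \leq (\log n)^C/(g_i\eps)$ with high probability. Summing the resulting geometric series $\sum_i 1/g_i = O(1/g_0)$ gives $\sum_i \widehat\E[\Delta_i] = O((\log n)^C/(g_0\eps))$, which sits inside the target $\bigO{n^{1/3}/\sqrt{\eps n^{1/3}}} = \bigO{n^{1/6}/\sqrt{\eps}}$ whenever $\eps \geq n^{-1/3}$, provided $g_0$ is chosen large enough to absorb the polylogarithmic slack.

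The main obstacle is obtaining the sharp per-step bound on $\Delta_i$. A naive estimate of order $|\mathcal C_2(p_i)|$ would accumulate to something like $O(n^{2/3})$ and is useless; one really has to exploit that within each small $p_i$-cluster the RSTRE behaves like a UST on a near-tree and hence has diameter of order the square root of the cluster size. Additional care is required to control the (unlikely) event that the weighted walk exits $\mathcal C'$ before being absorbed by $\cT_{\mathcal C_1(p_i)}$, and to ensure that all bad-event probabilities (for cluster sizes, for Lemma~\ref{L:gap}, and for LERW lengths) remain summable uniformly over the $O(\log n)$ levels and $O(n)$ vertices.
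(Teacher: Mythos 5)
Your skeleton matches the paper's: telescope over $p_0<\cdots<p_m$, invoke Lemma~\ref{L:gap} for the containments, and sum a geometric series in $g_i$. But the heart of the proof — the per-step bound — is where you diverge, and your route does not close. The paper's key move is that once Lemma~\ref{L:gap} gives $\cT_{\mathcal{C}_1(p_{i+1})}\subseteq G_{n,p_{i+2}}$ (event $F(i+1)$), and since every vertex of $\mathcal{C}_1(p_i)$ lies in $\cT_{\mathcal{C}_1(p_i)}$, Lemma~\ref{L:diamLP} and \eqref{eq:lpsuper} bound the increment $D_{i+1}-D_i$ by $2\,\ell\big(G_{n,p_{i+2}}\setminus\mathcal{C}_1(p_i)\big)+2$ — the longest acyclic path in a slightly supercritical Erd\H{o}s--R\'enyi graph avoiding the giant. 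This is a \emph{purely combinatorial} random-graph quantity, controlled by \cite[Lemma~3]{ABR09} (event $B(i)$ in \eqref{eq:good2}), and no analysis of the RSTRE measure on small clusters is required at this stage; the LERW enters the argument only once, later, in Lemma~\ref{L:fasthitLERW}.

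Your alternative bound on $\Delta_i$ is not established and, as written, would not suffice even if it were. First, the appeal to ``a UST-type diameter estimate in the spirit of Theorem~\ref{T:GeneralLow}'' on a small $p_i$-cluster $\mathcal{C}'$ is inapplicable: Theorem~\ref{T:GeneralLow} requires a $b$-expander with bounded max-to-min degree ratio, whereas a slightly supercritical small component is essentially a tree, the antithesis of an expander, and the $\sqrt{|V|}$ diameter law does not apply. Second, a branch of $\cT_{\mathcal{C}_1(p_{i+1})}$ hanging off $\cT_{\mathcal{C}_1(p_i)}$ need not stay inside a single small $p_i$-cluster: the LERW can chain through several small clusters via $p_{i+1}$-open edges before being absorbed, so bounding $\Delta_i$ by the diameter of one cluster is unjustified — this is precisely what $\ell(G_{n,p_{i+2}}\setminus\mathcal{C}_1(p_i))$ captures holistically. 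Third, and decisively, even granting $\Delta_i\lesssim(\log n)^C/(g_i\eps)$, the sum $\sum_i\widehat\E[\Delta_i]=O\big((\log n)^C/(g_0\eps)\big)$ does \emph{not} fit inside the target $O\big(n^{1/3}/\sqrt{\eps n^{1/3}}\big)=O\big(n^{1/6}/\sqrt{\eps}\big)$: at $\eps=n^{-1/3}$, which is exactly the choice used to prove Theorem~\ref{T:highDisorder}, the two differ by a factor $(\log n)^C$, and $g_0$ is a constant which can only absorb constant factors, not a diverging polylogarithm. (A smaller slip: $\mathcal{C}_1(p_i)\subseteq\mathcal{C}_1(p_{i+1})$ is not automatic from the monotone coupling — the old giant could be swallowed by a different component — it is the event $C(i)$ in \eqref{eq:good3}, which must be controlled via Theorem~\ref{T:size2ndSup} as in Lemma~\ref{L:good}. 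Also note that the failure probabilities in Theorem~\ref{T:size2ndSup} do not tend to zero when $\eps n^{1/3}$ is of constant order, so ``with probability $1-O(n^{-1})$'' is too optimistic at the start of the telescope; the paper instead runs an expectation argument with the stopping index $i^*$.)
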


\begin{remark} \label{R:Critical-pm}
	As will be noted just after the proof, if we choose $\eps = n^{-1/3}$ in Lemma \ref{L:sizepm} we can obtain the bound 
	\begin{equation*} 
		\widehat{\E} \Big[ {\rm diam} \big( \ClCp{1}{p_m} \big) \Big] = O(n^{1/3})\,.
	\end{equation*}
	We shall make use of this later in the proof of Theorem \ref{T:highDisorder}.
\end{remark}

To prove Lemma \ref{L:sizepm}, we need to introduce some extra notation. For two graphs $H$ and $G$, we write $H \subseteq G$ if $H$ is a subgraph of $G$. We denote by $\ell(G)$ the longest acyclic path in a connected graph $G$, and for $G_1, G_2$ subgraphs of $G$, we write $\ell(G_1 \setminus G_2)$ for the longest acyclic path in $G_1$ not using vertices in $G_2$.
Let $G_1,G_2,G_1',G_2'$ be connected subgraphs of $G$ with $G_1'$ a subgraph of $G_1$ and the vertices of $G_2'$ forming a subset of those of $G_2$. %Let $G_1' \supseteq G_1$ and $G_2'$, with $V(G_2') \subseteq V(G_2)$, be subgraphs of $G$. 
We shall make use of the following obvious inequalities
\begin{equation}\label{eq:lpsuper}
	\begin{aligned}
		\ell (G_1' \setminus G_2) &\leq \ell (G_1 \setminus G_2),  \\
		\ell (G_1 \setminus G_2) &\leq \ell (G_1 \setminus G_2').
	\end{aligned}
\end{equation}
Note that for vertices $u,v \in G_1$ we can construct a path between them by considering the union of a path from $u$ to some $x_1 \in G_2$, a path from $v$ to some $x_2 \in G_2$ and a path from $x_1$ to $x_2$ in $G_2$. This leads to the following upper bound on the diameter of $G_1$.
\begin{lemma}[Lemma 2 of \cite{ABR09}] \label{L:diamLP}
	If $G_2 \subseteq G_1$ are both connected, then
	\begin{equation}
		{\rm diam}(G_1) \leq {\rm diam}(G_2) + 2 \ell(G_1 \setminus G_2) + 2.
	\end{equation}
\end{lemma}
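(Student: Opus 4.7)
The plan is to take any two vertices $u, v \in V(G_1)$ achieving the diameter and construct a path between them that exits into $V(G_2)$, crosses $G_2$, and re-enters. Fix $u, v \in V(G_1)$ with $d_{G_1}(u,v) = {\rm diam}(G_1)$. Let $P_u$ be a shortest path in $G_1$ from $u$ to the set $V(G_2)$, and let $x_u \in V(G_2)$ be its endpoint in $G_2$; define $P_v$ and $x_v$ analogously. If $u \in V(G_2)$ we take $P_u$ trivial of length $0$, and likewise for $v$. Since $G_2$ is connected, there is a geodesic in $G_2$ from $x_u$ to $x_v$ of length at most ${\rm diam}(G_2)$. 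Concatenating $P_u$, this geodesic, and $P_v$ gives a walk in $G_1$ from $u$ to $v$, so
\begin{equation*}
    {\rm diam}(G_1) \leq |P_u| + {\rm diam}(G_2) + |P_v|.
\end{equation*}

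The next step is to bound $|P_u|, |P_v| \leq \ell(G_1 \setminus G_2) + 1$. By the minimality of $P_u$, it is acyclic and meets $V(G_2)$ only at its endpoint $x_u$ (otherwise one could shortcut at an earlier visit to $V(G_2)$ and contradict minimality). Removing the final edge of $P_u$ therefore leaves an acyclic path in $G_1$ using no vertex of $G_2$, hence of length at most $\ell(G_1 \setminus G_2)$; adding the final edge back gives $|P_u| \leq \ell(G_1 \setminus G_2) + 1$. The identical argument yields the same bound for $|P_v|$. Plugging into the inequality above gives ${\rm diam}(G_1) \leq {\rm diam}(G_2) + 2\ell(G_1 \setminus G_2) + 2$.

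There is essentially no obstacle: the proof is a straightforward triangle-inequality decomposition, and the only subtlety is verifying via minimality that $P_u$ and $P_v$ touch $V(G_2)$ only at their endpoints. The degenerate cases $u \in V(G_2)$ or $v \in V(G_2)$ are absorbed by the convention that the corresponding path has length zero, and if $V(G_1) = V(G_2)$ the bound reduces to the trivial ${\rm diam}(G_1) \leq {\rm diam}(G_2) + 2$ under the natural convention $\ell(G_1\setminus G_2) \geq 0$.
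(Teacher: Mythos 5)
Your proof is correct and follows exactly the route the paper indicates in the remark preceding the lemma (and the original argument in Lemma 2 of \cite{ABR09}): route a geodesic between a diameter-realizing pair through $G_2$ via shortest exit paths, and bound each exit path by $\ell(G_1\setminus G_2)+1$ using that a shortest path to $V(G_2)$ is simple and meets $V(G_2)$ only at its endpoint. Nothing further is needed.
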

We will repeatedly apply Lemma 4.5 to bound ${\rm diam}(\ClCp{1}{p_i})$ in terms of ${\rm diam}(\ClCp{1}{p_{i+1}})$ for $0  \leq  i \leq m-1\,$ when $\,\mathcal C_1(p_i)\subset\mathcal C_1(p_{i+1})$. This requires us to control the probabilities of the following events. We say the index $i$, $0 \leq i \leq m$, is well-behaved if all the events
\begin{align}
	A(i) &:= \Big\{ |\mathcal{C}_1(p_i)| \geq \frac{3}{2} g_i \eps n\,
	%\in \Big[\frac{3}{2} g_i \eps n, \frac{5}{2} g_i \eps n\Big] 
	\text{ and }\, \ell(\mathcal{C}_1(p_i)) \leq (g_i \eps n^{1/3})^4 n^{1/3} \Big\}, \label{eq:good1} \\
	B(i) &:= \Big\{  \ell \big( G_{n, p_{i+2}} \setminus \mathcal{C}_1(p_i) \big)\leq n^{1/3}/ \sqrt{g_i  \eps n^{1/3} } \Big\}, \label{eq:good2} \\ 
	C(i) &:= \Big\{  \mathcal{C}_1(p_i) \subseteq \mathcal{C}_1(p_{i+1}) \Big\} ,  \label{eq:good3}
\end{align}
hold. Under the event $A(i)$, we have a bound on the length of the longest path in the largest cluster of $G_{n,p_i}$. The event $B(i)$ says that paths in $G_{n, p_{i+2}}$ avoiding $\mathcal{C}_1(p_i)$ cannot be too long.

The following lemma bounds the probability that an index $i$ is not well-behaved.  
%The following lemma controls the probabilities of these events.
\begin{lemma} \label{L:good}
	There exists a constant $K > 1$ such that if $g_0 > K$ and $0 \leq i \leq m$, then
	\begin{align}
		\P(A(i)^c) &\leq \exp\Big( -\frac{1}{4} \sqrt{g_i  \eps n^{1/3}} \Big)\,, \label{eq:bad_complement_A}\\
		\P(B(i)^c) &\leq 5 \exp\Big( -\frac{1}{4} \sqrt{g_i  \eps n^{1/3}} \Big)\,, \label{eq:bad_complement_B} \\
		\P(C(i)^c) &\leq 2 \exp\Big( -\frac{1}{4} \sqrt{g_i  \eps n^{1/3}} \Big)\,. \label{eq:bad_complement_C}
	\end{align}
\end{lemma}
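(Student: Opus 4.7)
I would treat the three events $A(i)$, $B(i)$, $C(i)$ separately, in each case applying one of the Erdős--Rényi estimates of Section \ref{SS:ERintro} with the parameter $\eps$ replaced by an appropriate multiple of $g_i\eps$. The assumptions $g_0 > K$ and $\eps \geq n^{-1/3}$ ensure $g_i\eps n^{1/3} \geq g_0 > K$ for every $i \leq m$, so the hypotheses of Theorems \ref{T:sizeSup}--\ref{T:size2ndSup} are met. The key structural remark is that the target tails are only $\exp(-c \sqrt{g_i \eps n^{1/3}})$, substantially weaker than the $\exp(-c\, g_i \eps n^{1/3})$ decay these Erdős--Rényi estimates deliver, leaving slack to absorb lower-order terms and polynomial prefactors.

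For $A(i)$, the size estimate $|\mathcal{C}_1(p_i)| \geq \tfrac{3}{2} g_i \eps n$ is immediate from Theorem \ref{T:sizeSup} with $\eps \mapsto g_i \eps$, which simultaneously supplies the excess bound ${\rm Exc}(\mathcal{C}_1(p_i)) \leq 150 (g_i\eps)^3 n$. The longest-path bound $\ell(\mathcal{C}_1(p_i)) \leq (g_i\eps n^{1/3})^4 n^{1/3}$ follows by combining this excess estimate (the giant is close to a tree) with the slightly-supercritical diameter bound for the giant component as used in \cite{ABR09}. For $C(i)$, the key combinatorial observation is that if $\mathcal{C}_1(p_i) \not\subseteq \mathcal{C}_1(p_{i+1})$ then the cluster $\widetilde{\mathcal{C}}$ of $G_{n,p_{i+1}}$ containing $\mathcal{C}_1(p_i)$ is disjoint from $\mathcal{C}_1(p_{i+1})$, so on $A(i)$ we have
\begin{equation*}
|\mathcal{C}_2(p_{i+1})| \geq |\widetilde{\mathcal{C}}| \geq |\mathcal{C}_1(p_i)| \geq \tfrac{3}{2} g_i\eps n.
\end{equation*}
Applying Theorem \ref{T:size2ndSup} with $\eps \mapsto g_{i+1}\eps$ and $k$ chosen of order $g_i^3 \eps^3 n$, so that the quantile $10(k + \log(g_{i+1}^3\eps^3 n))/(g_{i+1}\eps)^2$ matches $\tfrac{3}{2} g_i \eps n$, produces a tail $3 e^{-c g_i^3\eps^3 n} + 2 e^{-g_{i+1}\eps n^{1/3}}$, both well below $\exp(-\sqrt{g_i\eps n^{1/3}}/4)$ when $g_0$ is large; combined with $\P(A(i)^c)$ this yields \eqref{eq:bad_complement_C}.

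The most delicate step is $B(i)$. Conditioning on $\mathcal{C}_1(p_i) = S$ with $|S| \geq \tfrac{3}{2} g_i\eps n$, one checks via $g_{i+2} = (5/4) g_i$ that
\begin{equation*}
(n - |S|)\, p_{i+2} \leq \bigl(1 - \tfrac{3}{2} g_i \eps\bigr)(1 + g_{i+2}\eps) = 1 - \tfrac{1}{4} g_i \eps + o(g_i\eps),
\end{equation*}
so the induced subgraph of $G_{n,p_{i+2}}$ on $V \setminus S$ is subcritical with effective parameter $\eps_* \asymp g_i \eps$ satisfying $\eps_*^3 (n - |S|) \to \infty$. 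Its longest path is controlled by the near-tree structure of the largest components: one needs a sharp CRT-type diameter bound saying that the largest component of a near-critical subcritical random graph has diameter of order $\eps_*^{-1} \sqrt{\log(\eps_*^3 N)}$ with exponentially decaying tail, substantially better than the trivial size bound $\eps_*^{-2} \log(\eps_*^3 N)$ from Theorem \ref{T:sizeSub}. Equivalently, a first-moment count of simple paths gives $\mathbb{E}[\#\text{paths of length }\ell] \leq N (1-\eps_*)^\ell \leq N e^{-\eps_*\ell}$, and taking $\ell = n^{1/3}/\sqrt{g_i\eps n^{1/3}}$ yields $\eps_* \ell \asymp \sqrt{g_i\eps n^{1/3}}$, after which the $\log N$ penalty from the first moment must be absorbed either by choosing $g_0$ large or by the finer CRT estimate. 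A brief verification is also needed to confirm that conditioning on $\mathcal{C}_1(p_i) = S$ only sparsifies the induced graph on $V \setminus S$ (it forces all components of $G_{V\setminus S, p_i}$ to have size $< |S|$), so unconditional bounds dominate. The chief technical hurdle is thus establishing the CRT-type diameter estimate with a tail matching the target; by contrast, $A(i)$ and $C(i)$ reduce essentially to routine applications of the Erdős--Rényi estimates already quoted in Section \ref{SS:ERintro}.
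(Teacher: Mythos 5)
Your treatment of $C(i)$ coincides exactly with the paper's: the paper observes the inclusion $\{|\mathcal{C}_1(p_i)| \geq \tfrac{3}{2}g_i\eps n\} \cap \{|\mathcal{C}_2(p_{i+1})| < \tfrac{3}{2}g_i\eps n\} \subseteq C(i)$, bounds the first factor by $\P(A(i)^c)$ and the second by Theorem \ref{T:size2ndSup} with $k$ of order $g_i^3\eps^3 n$, and takes $g_0$ large to absorb the resulting terms into $\exp(-\tfrac{1}{4}\sqrt{g_i\eps n^{1/3}})$ — precisely what you propose.

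The divergence is in $A(i)$ and $B(i)$. The paper does not derive these from the Erdős--Rényi estimates quoted in Section \ref{SS:ERintro} at all: it cites \cite[Lemma 3]{ABR09} as a black box, together with the arithmetic facts $p_{i+2} - 1/n \leq \tfrac{5}{4}(p_i - 1/n)$ and $1/n \leq p_i \leq 1/n + 2/(n\log n)$, which certify that $\cite{ABR09}$'s Lemma 3 applies uniformly in $0 \leq i \leq m$. You instead attempt an independent derivation, and this leaves genuine gaps. For $A(i)$, the longest-path bound $\ell(\mathcal{C}_1(p_i)) \leq (g_i\eps n^{1/3})^4 n^{1/3}$ does not follow from the size and excess estimates of Theorem \ref{T:sizeSup} together with a ``diameter bound for the supercritical giant'': at criticality $\eps = n^{-1/3}$ the size bound only gives $\ell(\mathcal{C}_1(p_0)) \leq \tfrac{5}{2}g_0 n^{2/3}$, which is a full power of $n^{1/3}$ too weak, and no diameter estimate is actually stated in Section \ref{SS:ERintro} (nor does a bound on ${\rm diam}$ automatically bound the longest acyclic path). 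For $B(i)$ you candidly acknowledge that the argument needs a ``CRT-type diameter bound'' for subcritical components with an exponentially decaying tail, observe that Theorem \ref{T:sizeSub} alone is off by a logarithm and a power of $\eps_*^{-1}$, and label this ``the chief technical hurdle'' without supplying it. That is the missing ingredient: all of this is precisely what \cite[Lemma 3]{ABR09} packages, and the intended proof is to invoke it rather than reconstruct it from Theorems \ref{T:sizeSup}--\ref{T:size2ndSup}. So the proposal is incomplete as written.
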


\begin{proof}
	The inequalities \eqref{eq:bad_complement_A} and \eqref{eq:bad_complement_B} follow directly from \cite[Lemma 3]{ABR09} %with their $f_i$ replaced by $g_i\eps n^{1/3}$ 
	since %$g_{i+2}=g_i 5/4 $
	$p_{i+2}-1/n\leq 5/4 (p_{i}-1/n)$ 
	and $1/n\leq p_i\leq 1/n + 2/n\log n$. Furthermore, notice that
	\begin{equation*}
		\Big\{ | \mathcal{C}_1(p_i) | \geq   \frac{3}{2} g_i \eps n \Big\} \cap \Big\{ | \mathcal{C}_2(p_{i+1})| < \frac{3}{2} g_i \eps n \Big\} \subseteq C(i)\,.
	\end{equation*}
	It follows, by the bound in \eqref{eq:bad_complement_A} and Theorem \ref{T:size2ndSup}, say with $k=g_i \eps n/10$, that
	\begin{equation*}
		\P(C(i)^c) 
		\leq \P\big(A(i)^c\big) + \P\Big(| \mathcal{C}_2(p_{i+1})| \geq  \frac{3}{2} g_i \eps n \Big)
		\leq \e^{ -\frac{1}{4} \sqrt{g_i  \eps n^{1/3}}} +  3 \e^{ -\frac{1}{10} g_i \eps^3 n } + 2 \e^{- g_i \eps n^{1/3} },
	\end{equation*}
	which satisfies the bound in \eqref{eq:bad_complement_C} for $g_0$ large enough.
\end{proof}

\smallskip

\begin{proof}[Proof of Lemma \ref{L:sizepm}.]
	%We start with a preliminary estimate.
	Recall the choice of $m$ from \eqref{eq:definitionm}.
	For $0 \leq i \leq m$, consider the events
	\begin{align}
		F(i) := \big\{ \ClCp{1}{p_i} \subseteq G_{n, p_{i+1}} \big\} = \big\{ \forall u,v \in \mathcal{C}_1(p_i), \forall e \in \gamma_{\cT}(u,v), \omega_e \leq p_{i+1} \big\}, \nonumber 
	\end{align}
	which states that every two vertices $u,v \in \mathcal{C}_1(p_i)$ are connected in $\cT$ via a $p_{i+1}$-open path. 
	We claim that the event 
	\begin{align}
		F :=  \bigcap_{0 \leq i \leq m} F(i) = \big\{ \forall \, 0 \leq i \leq m \, : \, \ClCp{1}{p_{i}} \subseteq G_{n, p_{i+1}} \big\}, \label{eq:F_subset}
	\end{align}
	holds with high $\widehat \P$-probability.
	Indeed, note that the complement of $F(i)$ is a subset of the event appearing in \eqref{eq:gapUnion} (with $p=p_i$ and $q=p_{i+1}$) so that by a union bound, Lemma \ref{L:gap} and the fact that $m \leq n$, we obtain 
	\begin{equation}
		\widehat{\P}(F^c) 
		\leq m  \max_{0 \leq i \leq m} \widehat{\P}(F(i)^c) 	
		\leq n^6 \exp \Big(- {\beta_n }(p_1-p_0) \Big)
		%\leq n^6 \exp \Big(- \frac{\beta_n \eps g_0}{10 n} \Big) 
		\leq n^6 \exp \Big(- \frac{g_0 \log n}{10} \Big)
		=O\Big(\frac 1n\Big) \label{eq:F_subset_bound}
	\end{equation}
	where the third inequality used the assumption that $\beta_n \eps \geq n \log n$ and the last equality holds for $g_0$ large enough.
	
	\smallskip
	
	For $0 \leq i \leq m-1$, denote now by $D_i$  the diameter of $\ClCp{1}{p_i}$. Assume that $i$ is well-behaved and that $\ClCp{1}{p_{i+1}} \subseteq G_{n, p_{i+2}}$.
	By Lemma \ref{L:diamLP} and the inequalities in \eqref{eq:lpsuper} we have
	\begin{align}
		D_{i+1} - D_i 
		&\leq 2 \ell \left( \ClCp{1}{p_{i+1}} \setminus \ClCp{1}{p_{i}} \right) + 2 \nonumber \\
		&\leq 2 \ell \left( G_{n, p_{i+2}} \setminus  \mathcal{C}_1(p_i) \right) + 2 \nonumber \\
		&\leq \frac{2n^{1/3}}{\sqrt{g_i \eps n^{1/3}}} + 2\,, \label{eq:diff_in_D}
	\end{align}
	where in the first line we used the fact that the event $C(i)$ guarantees that $\ClCp{1}{p_i}\subseteq \ClCp{1}{p_{i+1}}$, while the last inequality holds by the definition of the event $B(i)$ in \eqref{eq:good2}.
	We may repeat this argument as long as the indices are well-behaved. 
	
	\smallskip
	
	Let $i^*$, $0 \leq i^* \leq m$, be the index such that
	\begin{equation}
		i^* := \min \{ 0 \leq i \leq m \, : \, \text{all indices } i\leq k < m \text{ are well-behaved} \}\,, \label{eq:i*}
	\end{equation}
	or let $i^* = m$ if $m-1$ is not well-behaved. Under the assumption that $\ClCp{1}{p_{i}} \subseteq G_{n, p_{i+1}}$ for all $0 \leq i \leq m$, after repeating the argument leading to \eqref{eq:diff_in_D} we obtain 
	\begin{align}
		D_m - D_{i^*} &\leq 2 n^{1/3} \sum_{i = i^*}^m \frac{1}{\sqrt{g_i \eps n^{1/3}}} + 2 m \nonumber \\
		&\leq \frac{2 n^{1/3}}{\sqrt{g_0 \eps n^{1/3}}} \sum_{i = 0}^\infty {\Big( \frac{4}{5} \Big)}^{\tfrac i4} + O(\log n) = \bigO{\frac{n^{1/3}}{\sqrt{\eps n^{1/3}}}}, \label{eq:goodDif}
	\end{align}
	where we used that $m = O(\log n)$.

	Using that trivially $D_m \leq n$, the inequality in \eqref{eq:goodDif} now gives that
	\begin{align}
		\widehat{\E}[ D_{m} ] & \leq \widehat{\E}[ D_{i^*}1_F ] + n \widehat{\P}(F^c) + \bigO{\frac{n^{1/3}}{\sqrt{\eps n^{1/3}}}} \nonumber \\
		&\leq \widehat{\E}[D_{i^*} 1_F] + n^7 \e^{- (\log n) g_0 /10}  + \bigO{\frac{n^{1/3}}{\sqrt{\eps n^{1/3}}}}. \label{eq:largeg0}
	\end{align}
	Furthermore, on the event $F$ and $i^*=i$, we have $D_i \leq \ell(\mathcal{C}_1(p_{i+1}))$, and hence
	\begin{align*}
		\widehat{\E}[ D_{i^*} 1_F] &= \sum_{i = 0}^m \widehat{\E} \big[ D_{i} 1_F 1_{i^* = i} \big] \\
		&\leq \widehat{\E} \big[ D_0 1_{F} 1_{i^*=0} \big] + \sum_{i=1}^{m-1} \widehat\E \big[ \ell(\mathcal{C}_1(p_{i+1}) ) 1_{i^* = i} \big] + n \P(i^* = m).
	\end{align*}
	If $i^* = i$, then the longest path in $\mathcal{C}_1(p_{i+1})$ satisfies the bound in \eqref{eq:good1} so that
	\begin{align} \label{eq:D-i-star}
		\widehat{\E}[ D_{i^*}1_F ] &\leq  \widehat{\E}[D_0 1_{F} 1_{i^*=0}] + \sum_{i=1}^{m-1} g_{i+1}^4 (\eps n^{1/3})^4 n^{1/3} \P(i^* = i) + n \P(i^* = m)\\
		&\leq \widehat{\E}[ D_0 ] +  {8}  g_0^4 \eps^4 n^{5/3} 
		\sum_{i=1}^{m-1} \exp\Big(-\frac{1}{4} \sqrt{g_{i-1} \eps n^{1/3} }\Big)\left( \frac{5}{4} \right)^{2(i+1)}  \nonumber \\
		& \qquad \qquad \qquad +  {8}  n \e^{- \frac{1}{4} \sqrt{n^{1/3}/\log n}}, \nonumber
	\end{align}
	where we used Lemma \ref{L:good} to upper bound the probabilities 
	%\begin{equation*}
	%	\mic{\P(i^* = i)\leq \P(i -1 \text{ is not well-behaved})\leq \P(A(i -1)^c)+\P(B(i -1)^c)} \, ,
	%\end{equation*}x
	\begin{align}
		\P(i^* = i)& \leq \P(i - 1 \text{ is not well-behaved}) \leq \P(A(i-1)^c)+\P(B(i-1)^c) +\P(C(i-1)^c) \nonumber \\
		& \leq  {8}  \exp\Big( -\frac{1}{4} \sqrt{g_{i-1}  \eps n^{1/3}} \Big) \, ,\label{eq:istar}
	\end{align} 
	for $g_0$ large enough.
	Further observe that we have
	\begin{align*}
		&g_0^4 \eps^4 n^{5/3} \sum_{i=1}^{m-1}  \exp\Big(-\frac{1}{4} \sqrt{g_{i-1} \eps n^{1/3} }\Big)\left( \frac{5}{4} \right)^{2(i+1)} 
		\\
		&\hspace{5cm}=O \Bigg( n^{1/3} \big(g_0 \eps n^{1/3}\big)^4 \exp \bigg( -\frac{\sqrt{g_0 \eps n^{1/3}}}{4} \bigg)  \Bigg)
	\end{align*}
	since the first summand dominates the whole sum.
	As $\eps \geq n^{-1/3}$, it follows that %\luca{[[ and that $O(\eps^4 n^{5/3} e^{....}) = O(n^{1/3}/\sqrt{\eps n^{1/3}})$ (cf with $x=\sqrt{\eps n^{1/3}} \geq 1$ and $x^8 \exp(-g_0 x) = O(x^{-1})$). We make it of the same order as the bound in\eqref{eq:largeg0}.]]}
	\begin{equation*}
		\widehat{\E}[ D_{i^*}1_F ] \leq \widehat{\E}[ D_0 ] + \bigO{\frac{n^{1/3}}{\sqrt{\eps n^{1/3}}}}.
	\end{equation*}
	Possibly increasing $g_0$ to make the exponential in \eqref{eq:largeg0} of order $\bigO{n^{1/3}/ \sqrt{\eps n^{1/3}}}$ finally concludes the proof of the lemma.
\end{proof}

We point out that Remark \ref{R:Critical-pm} follows by bounding 
the term $\widehat{\E}[ D_0 1_{F} 1_{i^*=0}]$ in \eqref{eq:D-i-star} with
\begin{equation*}
	\widehat{\E}[ D_0 1_{F} 1_{i^*=0}]
	\leq \widehat{\E}[ \ell(\cC_1(p_1) 1_{A(1)}]
	\leq g_1^4 n^{1/3} \, ,
\end{equation*}
instead of bounding it by $\widehat \E[D_0]$.

%%%%%%%%%%%%%%%%%%%%%%%%%%%%%%%%%%%%%%%

%%%%%%%%%%%%%%%%%%%%%%%%%%%%% STEP 3 %%%%%%%%%%%%%%%%%%%%%%%%%%%%%

\subsection{Connections of vertices outside the largest component} \label{SS:Connect_outside}

Having obtained a bound on $\widehat{\E}[{ \rm diam}(\ClCp{1}{p_m})]- \widehat{\E}[{ \rm diam}(\ClCp{1}{p_0})]$, we now bound 
$$
\widehat{\E}[{ \rm diam}(\cT)] - \widehat{\E}[{ \rm diam}(\ClCp{1}{p_m})]
$$ 
by showing that, in the random spanning tree $\cT$, the distance between any vertex in $V_n$ and $\ClCp{1}{p_m}$ is uniformly bounded by $(\log n)^c$, for some $c>0$, with high probability. This is established in Lemma \ref{L:fasthitLERW},  
which uses Wilson's algorithm to bound the length of paths in the RSTRE. A key step, stated as Lemma \ref{L:fasthit} below, is to show that the random walk does not visit too many different $p_m$-clusters before hitting the largest component.

In the following, $\omega$ is a given random environment coupled to the random graphs $G_{n,p}$ as in Section~\ref{SS:graph_reduce}, and $\zeta = \zeta(n) > K n^{-1/3} = o(1)$ is chosen such that the theorems in Section~\ref{SS:ERintro} are applicable with $\zeta$ in place of $\varepsilon$.

\begin{lemma} \label{L:fasthit}
	Let $p = \frac{1 + \zeta}{n}$ with $\zeta = \zeta(n) > K n^{-1/3}$ and consider an arbitrary $v_0 \in V_n$. Denote by $Q_{v_0}^\omega$ the probability law of the lazy random walk $(X_t)_{t \geq 0}$ on $(G, \weight)$ started at $X_0 = v_0$, and let $\tau_{\cC_1(p)}$ be the first hitting time of $\mathcal{C}_1(p)$. Define ${\rm Ran}(X)$ as the set of distinct vertices that $(X_t)_{0 \leq t \leq \tau_{\cC_1(p)}}$ visits, and denote by $|{\rm Ran}(X)|$ its cardinality. For all $n$ large enough, and for all $M\in \mathbb{N}$ and $k\geq 2$, we have %Then for any $n$ large enough, any integer $M = M(n)$ and $k = k(n) \geq 2$, we have
	\begin{equation} \label{eq:fasthit}
		\E \Big[ Q_{v_0}^\omega \Big( |{\rm Ran}(X)| > 10 M(k +  \log(\zeta^3 n)) \zeta^{-2} \Big) \Big] \leq (1 - \zeta)^{M - 1} + 3\e^{-k} + 4 \e^{- \zeta n^{1/3}}.
	\end{equation}
\end{lemma}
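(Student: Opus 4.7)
The plan is to decompose $|{\rm Ran}(X)|$ according to the $p$-cluster of $G_{n,p}$ containing each visited vertex, and to control separately two ingredients: the sizes of the non-giant $p$-clusters and the number of distinct non-giant clusters the walk meets before entering $\mathcal{C}_1(p)$. For the size control, I would use Theorem~\ref{T:size2ndSup} with $\eps=\zeta$, which gives $|\mathcal{C}_2(p)|\leq L:=10(k+\log(\zeta^3 n))\zeta^{-2}$ with $\P$-probability at least $1-3\e^{-k}-2\e^{-\zeta n^{1/3}}$, implying that every non-giant $p$-cluster has size at most $L$ on this event. In addition, Theorem~\ref{T:sizeSup} gives $|\mathcal{C}_1(p)|\geq \tfrac{3}{2}\zeta n$ with $\P$-probability at least $1-2\e^{-\zeta n^{1/3}}$. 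Let $\mathcal{G}$ denote the intersection of these two favorable $\omega$-events, so $\P(\mathcal{G}^c)\leq 3\e^{-k}+4\e^{-\zeta n^{1/3}}$, which will account for the second and third terms of the desired bound.

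Next I would introduce the cluster-exit times $0=\sigma_0<\sigma_1<\sigma_2<\cdots$, where $\sigma_{j+1}$ is the first time after $\sigma_j$ at which $X$ leaves the $p$-cluster of $X_{\sigma_j}$, and set $N^*:=\min\{j\geq 0:\,X_{\sigma_j}\in\mathcal{C}_1(p)\}$. On $\mathcal{G}$, the vertices visited by $X$ before $\tau_{\mathcal{C}_1(p)}$ lie in a union of $N^*$ non-giant clusters, each of size $\leq L$, together with one vertex of $\mathcal{C}_1(p)$, so $\{N^*\leq M\}\cap\mathcal{G}\subseteq\{|{\rm Ran}(X)|\leq ML+1\}$. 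It therefore remains to prove $\E[Q^\omega_{v_0}(N^*>M)\,;\,\mathcal{G}]\leq (1-\zeta)^{M-1}$. The key input I would establish is an exchangeability identity: fix any $v\notin\mathcal{C}_1(p)$ and set $C:=\mathcal{C}(v)$; conditionally on $G_{n,p}$ the weights $\{\weight(u,w):\,u\in C,\,w\notin C\}$ are i.i.d.\ (each underlying $\omega_{(u,w)}$ is uniform on $(p,1)$, since every such edge is $p$-closed). For any $w\neq w'$ outside $C$, the simultaneous transposition $\omega_{(u,w)}\leftrightarrow\omega_{(u,w')}$ applied for every $u\in C$ is a measure-preserving involution of $\omega$ that fixes $G_{n,p}$ and every total $\weight(u)$, and hence leaves internal walk dynamics and exit times unchanged while swapping the events $\{X_{\sigma_1}=w\}$ and $\{X_{\sigma_1}=w'\}$. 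This yields $\E_\omega[Q^\omega_v(X_{\sigma_1}=w)\mid G_{n,p}]=1/(n-|C|)$ for every $w\notin C$, and summing over $w\in\mathcal{C}_1(p)$ gives, on $\mathcal{G}$,
\[
\E_\omega\big[Q^\omega_v(X_{\sigma_1}\in\mathcal{C}_1(p))\,\big|\,G_{n,p}\big]=\frac{|\mathcal{C}_1(p)|}{n-|C|}\;\geq\;\zeta.
\]

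To conclude, I would iterate this bound using the strong Markov property of $Q^\omega$ at $\sigma_j$, which gives $Q^\omega_{v_0}(X_{\sigma_{j+1}}\notin\mathcal{C}_1(p)\mid \mathcal{F}_{\sigma_j})=Q^\omega_{X_{\sigma_j}}(X_{\sigma_1}\notin\mathcal{C}_1(p))$, and then apply the exchangeability identity to the walk restarted at $X_{\sigma_j}$, conditionally on the $\sigma$-algebra generated by $G_{n,p}$ together with the weights already revealed by the trajectory up to time $\sigma_j$. This should yield a conditional failure probability at most $1-\zeta$ at each step, and an induction on $M$ would then produce the desired factor $(1-\zeta)^{M-1}$. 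The main obstacle is precisely this inductive step: successive factors $Q^\omega_{X_{\sigma_j}}(X_{\sigma_1}\notin\mathcal{C}_1(p))$ share the same disorder $\omega$, so one must verify that at each cluster-exit time the external weights from the new cluster $\mathcal{C}(X_{\sigma_j})$ to $V_n\setminus\mathcal{C}(X_{\sigma_j})$ that have not yet influenced the walk still form a conditionally i.i.d.\ family. Careful bookkeeping of which weights have been revealed by the walk---essentially only the arrival edge into each successive cluster, together with the vertex-level totals $\weight(u)$ along the walk's path---is what keeps the transposition argument applicable inside the induction, and is the most delicate part of the proof.
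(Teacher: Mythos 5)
Your overall skeleton coincides with the paper's: the error terms $3\e^{-k}+4\e^{-\zeta n^{1/3}}$ come from Theorem~\ref{T:sizeSup} and Theorem~\ref{T:size2ndSup} exactly as you use them, the reduction from $|{\rm Ran}(X)|$ to the number of non-giant clusters met before $\tau_{\cC_1(p)}$ is the same, and your transposition identity is precisely the exchangeability mechanism that drives the paper's argument. The difficulty is that the step you defer --- the induction over successive clusters --- is the actual content of the proof, and the way you set it up would not go through as stated.

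Two concrete problems. First, after the first cluster the unexplored closed-edge weights are \emph{not} ``a conditionally i.i.d.\ family'', which is the property you say must be verified: the quenched probability of the observed trajectory depends on the totals $\weight(u)$ along the path, and conditioning on these couples the weights $\weight(x,z)$ emanating from the current vertex. What survives is only exchangeability (the paper notes this explicitly after \eqref{eq:lambda2}), and your transposition does deliver it --- but only if the two vertices $w,w'$ being swapped lie outside \emph{all} previously visited clusters, since otherwise an inter-cluster edge already used by the walk (whose weight enters the trajectory probability) is moved by the transposition and the measure-preservation-given-the-conditioning breaks. Second, your cluster-exit times $\sigma_j$ allow the walk to step back into a previously visited cluster, and for such exits the uniform exit law $\E\big[Q^\omega_v(X_{\sigma_1}=w)\mid G_{n,p}\big]=1/(n-|C|)$ fails under the accumulated conditioning: the probability of re-entering an explored cluster is not controlled by any symmetry, so the per-exit failure bound $1-\zeta$ does not follow from your identity at later steps.

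The paper resolves both issues simultaneously: it counts \emph{distinct} visited clusters via the stopping times $\tau_s$ in \eqref{eq:stoppingk} (so the relevant transition is always onto a fresh vertex, outside $\mathcal C_{\vec x}(p)$), conditions on $\sigma(\mathcal{F}_p,\mathcal{F}_{\vec x})$ where $\mathcal{F}_{\vec x}$ records exactly the trajectory edge weights and the totals $\weight(u)$ along the path, and compares the two quantities $\lambda_1$ (next vertex fresh and outside $\cC_1(p)$) and $\lambda_2$ (next vertex fresh), whose ratio is at most $1-|\cC_1(p)|/n$ by exchangeability of the unexplored weights at the current vertex; iterating this recursion yields $(1-\zeta)^{M-1}$. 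So your one-step symmetry idea is the right one, but to close the argument you would need to (i) replace ``conditionally i.i.d.'' by exchangeability given the revealed totals, (ii) restrict the symmetry to vertices outside the union of all visited clusters, and (iii) organize the recursion around distinct visited clusters (comparing ``fresh, non-giant'' against ``fresh'' transitions) rather than around raw cluster exits.
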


Here is a proof sketch. Each time the random walk $X$ visits a new $p$-open cluster other than $\mathcal{C}_1(p)$, it can visit at most $|\mathcal{C}_2(p)|$ new vertices, where $|\cC_2(p)|\leq C \zeta^2 \log(\zeta^3 n)$ with high probability by Theorem \ref{T:size2ndSup}. Hence, we may bound $| {\rm Ran}(X)|$ by $|\mathcal{C}_2(p)|$ times the number of clusters of $G_{n,p}$ other than $\cC_1(p)$ that the random walk visits before time $\tau_{\cC_1(p)}$. Every time the random walk discovers a new cluster there is approximately a $\P$ probability of $|\mathcal{C}_1(p)|/n \approx \zeta$ that the random walk jumps to $\mathcal{C}_1(p)$,  so that the probability of visiting $M$ different clusters before visiting $\mathcal{C}_1(p)$ is roughly $(1-\zeta)^{M-1}$. 

\begin{proof}[Proof of Lemma \ref{L:fasthit}]
	We first restrict to the event that the two largest clusters of $G_{n,p}$ are of typical size. More precisely, denote by $S(\zeta)=S(\zeta,k) := 10 (k +  \log(\zeta^3 n)) \zeta^{-2}$ and let $F_1$ and $F_2$ be the events
	\begin{align*}
		F_1 &:= \Big\{|\mathcal{C}_1(p)| \in \Big[ \frac{3}{2} \zeta n, \frac{5}{2} \zeta n \Big] \Big\}, \\
		F_2 &:= \Big\{|\mathcal{C}_2(p)| \leq S(\zeta) \Big\}.
	\end{align*}
	By Theorem \ref{T:sizeSup} and Theorem \ref{T:size2ndSup}, for all large $n$, we have
	\begin{align}
		&\E  \Big[ Q_{v_0}^\omega \Big(|{\rm Ran}(X)| > M S(\zeta) \Big) \Big] \nonumber \\
		&\hspace{2cm} \leq \E \Big[ Q_{v_0}^\omega \Big(|{\rm Ran}(X)| > M S(\zeta) \Big) 1_{F_1 \cap F_2} \Big] +  \P(F_1^c) + \P(F_2^c) \nonumber \\
		&\hspace{2cm} \leq  \E \Big[ Q_{v_0}^\omega \Big( |{\rm Ran}(X)| > M S(\zeta) \Big)  1_{F_1 \cap F_2} \Big] +  3\e^{-k} + 4 \e^{-\zeta n^{1/3}}  \label{eq:fh1}.
	\end{align}

	For $s \geq 1$ consider the stopping times
	\begin{equation}
		\tau_s := \min \big\{ t \geq 0 : X_t \text{ has visited $s$ distinct  clusters of } G_{n,p}\big\}\,. \label{eq:stoppingk}
	\end{equation}
	We claim that to prove Lemma \ref{L:fasthit}, it suffices to show the following recursive inequality
	\begin{equation} \label{eq:hittingRecursive}
		\E \big[ Q_{v_0}^\omega(\tau_{\cC_1(p)} > \tau_s) 1_{F_1 \cap F_2} \big] \leq (1- \zeta) \E \big[ Q_{v_0}^\omega(\tau_{\cC_1(p)} > \tau_{s-1}) 1_{F_1 \cap F_2} \big].
	\end{equation}
	On the event $F_2$ each cluster $\mathcal{C}_\ell(p)$, $\ell \geq 2$, has size at most $S(\zeta)$. Since $|{\rm Ran}(X)|$ counts each vertex at most once, if $|{\rm Ran}(X)| > MS(\zeta)$, then the random walk $X$ must have visited at least $M$ distinct clusters other than $\cC_1(p)$. In particular,
	\begin{align*}
		\E \Big[ Q_{v_0}^\omega \Big( |{\rm Ran}(X)| > M S(\zeta) \Big) 1_{F_1 \cap F_2} \Big] &
		\leq \E \big[ Q_{v_0}^\omega( \tau_{\cC_1(p)} > \tau_M) 1_{F_1 \cap F_2} \big] \\
		&\leq (1- \zeta)^{M - 1},
	\end{align*}
	which together with \eqref{eq:fh1} completes the proof of \eqref{eq:fasthit}.

	We now proceed to prove the recursive inequality \eqref{eq:hittingRecursive}. Recall that $\weight(u):=\sum_{x\sim u} \weight(u, x)$. For a given environment $\omega$ and for a fixed random walk trajectory $\Vec{x} = (x_0, \ldots, x_{m-1})$, consider the $\sigma$-algebras
	\begin{align*}
		\mathcal{F}_p &:= \sigma \big( (1_{\omega_e \leq p})_{e \in K_n} \big) = \sigma(G_{n,p}), \\
		\mathcal{F}_{\Vec{x}} &:= \sigma \Big( \big( \weight(u) \big)_{u \in \Vec{x}}, \big( \weight(e) \big)_{e \in \Vec{x}} \Big),
	\end{align*}
	where $u \in \Vec{x}$ and $e \in \Vec{x}$ denote, respectively, that $u$ is one of the vertices in $\Vec{x}$ and $e$ is one of the edges $(x_{i-1}, x_{i})$ for $i=1, \ldots, m-1$. %an edge in the path $\Vec{x}$
	For a stopping time $\tau$ of the random walk $X$, we write $\tau(\Vec{x}) \leq t$ if conditioned on $\{(X_0, \ldots X_{m-1}) = \Vec{x}\}$, the event $\{\tau \leq t\}$ holds almost surely w.r.t.\ to the law $Q^\omega_{v_0}$ of the random walk, and define $\tau(\Vec{x}) = t$ and $\tau(\Vec{x}) > t$ analogously. Lastly, we denote by $(\Vec{x}, z) := (x_0, \ldots, x_{m-1}, z)$ the concatenation of $\Vec{x}$ and $z$, and by
	\begin{equation*}
		\mathcal C_{\Vec x}(p) := \big\{ v \in V_n \, : \, \exists x_i \in \Vec{x},\ v \xleftrightarrow{p} x_i \big\} \subseteq V_n,
	\end{equation*}
	the union of clusters in $G_{n,p}$ visited by the trajectory $\Vec x$.
	
	For a given $s>0$, let $\Xi_m=\Xi_m(s)$ be the set of paths $\Vec{x} = (x_0, \ldots, x_{m-1})$ such that $\tau_{s-1}(\Vec{x}) \leq m-1$, $\tau_{s}(\Vec{x}) > m-1$ and $x_i \not\in \Conep$ for all $0 \leq i \leq m-1$, i.e.\ the set of paths which have visited exactly $s-1$ clusters by time $m-1$, none of which is $\Conep$. Fix some $\Vec{x} \in \Xi_m$ and condition on the event $\{ (X_0, \ldots X_{m-1}) = \Vec{x}\}$ and the $\sigma$-algebras $\mathcal{F}_p, \mathcal{F}_{\Vec{x}}$. We wish to compare the probabilities of random walk trajectories that visit at time $m$ a $s$-th new cluster different from $\cC_1(p)$, versus those trajectories where the $s$-th new cluster could also be $\cC_1(p)$. The probability of the first set of trajectories satisfies %that at the $m$-th step visit a \mic{$s$-th} new cluster but not $\Conep$ versus those trajectories that are also allowed to visit $\Conep$. 
	%The probability of the random walk visiting a new cluster but not $\Conep$ satisfies
	\begin{align}
		&\E \Big[ Q_{v_0}^\omega \Big( \tau_{\cC_1(p)} > \tau_s = m \mid (X_0, \ldots, X_{m-1})
		= \Vec{x} \Big) \ \Big| \ \sigma(\mathcal{F}_{p}, \mathcal{F}_{\Vec{x}}) \Big] \nonumber \\
		&\quad =\E \Big[ \sum_{z \in V_n} 1_{\tau_s(\Vec{x}, z) = m < \tau_{\cC_1(p)}(\Vec{x}, z)} \  Q_{v_0}^\omega \Big( X_{m} = z \mid X_{m-1} = x_{m-1} \Big) \ \Big| \ \sigma(\mathcal{F}_{p}, \mathcal{F}_{\Vec{x}}) \Big] \nonumber \\
		&\quad = \frac{1}{\weight(x_{m-1})} \E \Big[ \sum_{z \in V_n \setminus (\mathcal C_{\Vec x}(p) \cup \mathcal{C}_1(p)) } \weight(x_{m-1}, z) \ \Big| \ \sigma(\mathcal{F}_{p}, \mathcal{F}_{\Vec{x}}) \Big] =: \lambda_1(m, \Vec{x})\,, \label{eq:lambda1}
	\end{align}
	where in the last line we have used that $\weight(x_{m-1})$ is $\mathcal{F}_{\Vec{x}}\,$-measurable.
	Similarly, for the probability that the random walk visits at time $m$ a $s$-th new cluster including $\cC_1(p)$, we have
	\begin{align}
		&\E \Big[ Q_{v_0}^\omega \Big( \tau_{\cC_1(p)} \geq \tau_s = m \mid (X_0, \ldots, X_{m-1})
		= \Vec{x} \Big) \ \Big| \ \sigma(\mathcal{F}_{p}, \mathcal{F}_{\Vec{x}}) \Big] \nonumber \\
		&\quad =\E \Big[ \sum_{z \in V} 1_{\tau_s(\Vec{x}, z) = m \leq \tau_{\cC_1(p)}(\Vec{x}, z)} \  Q_{v_0}^\omega \Big( X_{m} = z \mid X_{m-1} = x_{m-1} \Big) \ \Big| \ \sigma(\mathcal{F}_{p}, \mathcal{F}_{\Vec{x}}) \Big] \nonumber \\
		&\quad = \frac{1}{\weight(x_{m-1})} \E \Big[ \sum_{z \in V_n \setminus \mathcal C_{\Vec x}(p)} \weight(x_{m-1}, z) \ \Big| \ \sigma(\mathcal{F}_{p}, \mathcal{F}_{\Vec{x}}) \Big] =: \lambda_2(m, \Vec{x}). \label{eq:lambda2}
	\end{align}
	
	Conditioned on $\mathcal{F}_p$ and $\mathcal{F}_{\Vec{x}}$,  the edge weights $(\weight(x_{m-1}, z))_{z\in V_n\backslash \mathcal C_{\Vec x}(p)}$ are not independent since we have conditioned on $\weight(x_{m-1}) = \sum_{z\sim x_{m-1}} \weight(x_{m-1}, z) \in \mathcal{F}_{\vec x}$, but their joint law is exchangeable. In particular, 
	\begin{align}
		\frac{\lambda_1(m, \Vec{x})}{\lambda_2(m, \Vec{x})} &= \frac{|V_n \setminus (\mathcal C_{\Vec x}(p) \cup \mathcal{C}_1(p))|}{|V_n \setminus \mathcal C_{\Vec x}(p)|} \nonumber \\
		&= \frac{n - |\mathcal C_{\Vec x}(p)| - |\mathcal{C}_1(p)|}{n - |\mathcal C_{\Vec x}(p)|} \leq 1 - \frac{|\mathcal{C}_1(p)|}{n}. \label{eq:ratiolambda}
	\end{align}
	
	We perform a decomposition of the random walk trajectory up to time $\tau_s = m$, for all possible values of $m$ and all possible trajectories in $\Xi_m$. By first conditioning on $\sigma(\mathcal{F}_{\Vec{x}} ,\mathcal{F}_p)$ and then on $\mathcal{F}_p$, we have
	\begin{align*}
		&\E[Q_{v_0}^\omega(\tau_{\cC_1(p)} > \tau_s) 1_{F_1 \cap F_2}] \\
		&\hspace{1cm}=         \sum_{ m \geq 1} \sum_{ \Vec{x} \in \Xi_m}  \E\big[ Q_{v_0}^\omega(\tau_{\cC_1(p)} > \tau_s = m, \, (X_0, \ldots, X_{m-1}) = \Vec{x}) \big]\\
		&\hspace{1cm}= \sum_{ m \geq 1} \sum_{ \Vec{x} \in \Xi_m} \E \bigg[ \E \Big[ \E\big[ Q_{v_0}^\omega(\tau_{\cC_1(p)} > \tau_s = m \mid (X_0, \ldots, X_{m-1}) = \Vec{x}) \ \big| \ \sigma(\mathcal{F}_{p}, \mathcal{F}_{\Vec{x}}) \big] \\
		& \hspace{7cm} \cdot Q_{v_0}^\omega ((X_0, \ldots, X_{m-1}) = \Vec{x}) \ \Big| \ \mathcal{F}_p \Big] 1_{F_1 \cap F_2} \bigg],
	\end{align*}
	where we used that $Q_{v_0}^\omega( (X_0, \ldots, X_{m-1}) = \Vec{x})$ is $\sigma(\mathcal{F}_{p}, \mathcal{F}_{\Vec{x}})$ measurable. Using the definition of $\lambda_1(m, \Vec{x})$ in \eqref{eq:lambda1}, we see that the above expression is equal to 
	\begin{equation}
		\sum_{ m \geq 1} \sum_{ \Vec{x} \in \Xi_m} \E \Big[ \E \Big[ \frac{\lambda_1(m, \Vec{x})}{\lambda_2(m, \Vec{x})}\lambda_2(m, \Vec{x}) Q_{v_0}^\omega( (X_0, \ldots, X_{m-1}) = \Vec{x}) \ \Big| \ \mathcal{F}_p \Big]  1_{F_1 \cap F_2} \Big]\,, \label{eq:decomp_ratiol1l2} 
	\end{equation}
	which is bounded above by
	\begin{equation*}
		\sum_{ m \geq 1} \sum_{ \Vec{x} \in \Xi_m}   \E \Big[ \E \Big[ \lambda_2(m, \Vec{x}) Q_{v_0}^\omega( (X_0, \ldots, X_{m-1}) = \Vec{x}) \ \Big| \ \mathcal{F}_p \Big] \Big( 1 - \frac{|\mathcal{C}_1(p)|}{n} \Big) 1_{F_1 \cap F_2} \Big]   \, . \nonumber 
	\end{equation*}
	Here we have used inequality \eqref{eq:ratiolambda} and the fact that $|\mathcal{C}_1(p)|$ is $\mathcal{F}_p$ measurable. Since by \eqref{eq:lambda2}
	\begin{equation*}
		\sum_{ m \geq 1} \sum_{ \Vec{x} \in \Xi_m} \E \Big[ \E \big[ \lambda_2(m, \Vec{x}) Q_{v_0}^\omega( (X_0, \ldots, X_{m-1}) = \Vec{x}) \ \big| \ \mathcal{F}_p \big] \Big] = \E\big[Q_{v_0}^\omega(\tau_{\cC_1(p)} \geq \tau_{s}) \big]\,,
	\end{equation*} 
	we have that
	\begin{align*}
		\E[Q_{v_0}^\omega(\tau_{\cC_1(p)} > \tau_s) 1_{F_1 \cap F_2}] &\leq \E \Big[ Q_{v_0}^\omega(\tau_{\cC_1(p)} \geq \tau_{s}) \Big (1 - \frac{|\Conep|}{n} \Big) 1_{F_1 \cap F_2} \Big] \\
		&\leq (1- \zeta) \E \big[ Q_{v_0}^\omega(\tau_{\cC_1(p)} > \tau_{s-1}) 1_{F_1 \cap F_2} \big]\,,
	\end{align*}
	where the last inequality used that $|\Conep| \geq \zeta n$ on the event $F_1$.
\end{proof}

As a consequence of Lemma \ref{L:fasthit}, the length of the LERW used in Wilson's algorithm to connect any vertex $v_0$ to $\ClCp{1}{p_m}$ can not become too large. This length equals  $d_{\cT} \big( v_0, \ClCp{1}{p_m} \big)$, the distance between $v_0$ and $\ClCp{1}{p_m} $ in the spanning tree $\cT$. We state this in the following Lemma. 
\begin{lemma} \label{L:fasthitLERW}
	Let $\cT$ be the RSTRE on the complete graph $(K_n,\weight_n)$ with law $\bP^\omega_{n, \beta_n}$ as in Proposition \ref{P:DiaC1}. 
	%Let $M = \lceil 3 (\log n)^2 \rceil + 1$ and l
	Let $p_m$ be defined as in \eqref{eq:g_seq} and \eqref{eq:definitionm}. Then for all $v_0 \in V_n$
	\begin{equation*}
		\widehat{\P} \Big( d_{\cT} \big( v_0, \ClCp{1}{p_m} \big) \geq %\luca{30} M (\log n)^3
		(\log n)^6\Big) = \bigO{ \frac{1}{n^2} }\,.
	\end{equation*}
\end{lemma}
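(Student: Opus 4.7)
The plan is to realize $\cT$ via Wilson's algorithm with an ordering that processes all vertices of $\mathcal{C}_1(p_m)$ first, so that the path in $\cT$ from $v_0$ to $\ClCp{1}{p_m}$ is obtained as a loop-erased random walk starting from $v_0$, and then to apply Lemma \ref{L:fasthit} to bound its length.

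I would first assume $v_0 \notin \mathcal{C}_1(p_m)$, since otherwise the distance is zero. Fix an arbitrary $v_* \in \mathcal{C}_1(p_m)$ and order the vertices as $v_1 = v_*, v_2, \ldots, v_{|\mathcal{C}_1(p_m)|}$ enumerating $\mathcal{C}_1(p_m)$, with $v_{|\mathcal{C}_1(p_m)|+1} := v_0$ processed next. By induction on $i$, the LERW added at step $i$ by Wilson's algorithm equals the unique path $\gamma_\cT(v_i, T(i-1))$, so the intermediate tree $T(i)$ is the minimal subtree of $\cT$ containing $\{v_1, \ldots, v_i\}$. Setting $i = |\mathcal{C}_1(p_m)|$ identifies $T(i)$ with $\ClCp{1}{p_m}$ via definition \eqref{eq:defCbar}.

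At the next step, the lazy random walk $X$ starting at $v_0$ (with law $Q^\omega_{v_0}$, used in Wilson's algorithm) runs until its hitting time $\tau'$ of $\ClCp{1}{p_m}$, and its loop erasure is a path in $\cT$ from $v_0$ to some $y \in \ClCp{1}{p_m}$. Since a loop-erased walk has no repeated vertices, its length is at most $|{\rm Ran}(X_0, \ldots, X_{\tau'})|$, so pathwise
\[
d_\cT\big(v_0, \ClCp{1}{p_m}\big) \;\leq\; d_\cT(v_0, y) \;\leq\; \big|{\rm Ran}(X_0, \ldots, X_{\tau'})\big| \;\leq\; \big|{\rm Ran}(X_0, \ldots, X_{\tau_{\mathcal{C}_1(p_m)}})\big|,
\]
where the last inequality uses $\mathcal{C}_1(p_m) \subseteq \ClCp{1}{p_m}$ and hence $\tau' \leq \tau_{\mathcal{C}_1(p_m)}$. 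Consequently,
\[
\widehat\P\big(d_\cT(v_0, \ClCp{1}{p_m}) \geq (\log n)^6\big) \;\leq\; \E\Big[Q^\omega_{v_0}\big(|{\rm Ran}(X_0, \ldots, X_{\tau_{\mathcal{C}_1(p_m)}})| \geq (\log n)^6\big)\Big].
\]

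It then remains to invoke Lemma \ref{L:fasthit} with $\zeta := g_m \eps$. Definition \eqref{eq:definitionm} gives $\zeta \asymp 1/\log n$, in particular $\zeta > K n^{-1/3}$ for $n$ large. Choosing $k := 3 \log n$ and $M := \lceil 3 \log n/\zeta \rceil \asymp (\log n)^2$, the threshold in \eqref{eq:fasthit} satisfies $10 M (k + \log(\zeta^3 n)) \zeta^{-2} \leq C (\log n)^5 \leq (\log n)^6$ for $n$ sufficiently large, while the right-hand side of \eqref{eq:fasthit} is $(1-\zeta)^{M-1} + 3 e^{-k} + 4 e^{-\zeta n^{1/3}} = O(n^{-3}) = O(n^{-2})$. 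The main subtlety I anticipate is justifying the identification $T(|\mathcal{C}_1(p_m)|) = \ClCp{1}{p_m}$: this is intuitive but relies on the inductive claim that Wilson's algorithm incrementally builds the Steiner subtree of $\cT$ on the processed vertices, which is the key structural fact enabling the reduction to Lemma \ref{L:fasthit}.
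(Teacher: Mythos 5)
Your proof is correct and follows essentially the same route as the paper: realize $\cT$ by Wilson's algorithm processing $\mathcal{C}_1(p_m)$ first so that the intermediate tree is $\ClCp{1}{p_m}$, dominate the LERW length from $v_0$ by $|{\rm Ran}(X)|$ up to the hitting time of $\mathcal{C}_1(p_m)$, and apply Lemma \ref{L:fasthit} with $\zeta=g_m\eps\asymp 1/\log n$ (the paper uses $k=2\log n$, $M=\lceil 3(\log n)^2\rceil+1$, versus your $k=3\log n$, $M=\lceil 3\log n/\zeta\rceil$, an immaterial difference). Your explicit inductive justification that Wilson's algorithm builds the minimal spanning subtree on the processed vertices is exactly the structural fact the paper uses implicitly.
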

\begin{proof}
	To construct the path in $\cT$ from $v_0$ to $\ClCp{1}{p_m}$, we can first run Wilson's algorithm on $(K_n ,\weight_n)$ to generate $\ClCp{1}{p_m}$ by starting with an arbitrary vertex in $\cC_1(p_m)$ and then running successive LERWs from the remaining vertices of $\cC_1(p_m)$. %to generate $\ClCp{1}{p_m}$, starting the trajectories of the loop erased random walks from the vertices of $\mathcal C_1(p_m)$,} 
Then we can run a loop erased random walk from $v_0$ to $\ClCp{1}{p_m}$. 
%We need to prove that the length of $Y$ is small enough. 

We will apply Lemma \ref{L:fasthit} with $\zeta = g_m \eps$ (and $p_m = (1 + \zeta)/n$), where we remark that, by definition of $m$ in \eqref{eq:definitionm}, we have $(\log n)^{-1} \leq \zeta =o(1)$. Consider a random walk $X$ on the weighted graph $(K_n,\weight_n)$ started at $v_0$ and stopped when it reaches $\mathcal C_1(p_m)$. %that is independent from the random walks that generated $\ClCp{1}{p_m}$. 
Let $Y$ be the loop erasure of $X$ until the first time $X$ hits $\ClCp{1}{p_m}$.
%We may couple the loop erasure $Y$ with the random walk $X$ by letting $Y$ follow $X$ and stopping $Y$ when $X$ hits $\ClCp{1}{p_m}$. % \mic{\sout{ Loop erasing can only decrease the length of $Y$,  while allowing $X$ to run until $X$ hits $\mathcal{C}_1(p_m) \subseteq \ClCp{1}{p_m}$ can only increase the number of distinct vertices $X$ visits.}} 
Clearly the length of the trajectory of $X$ dominates that of the trajectory of $Y$.

Thus for large enough $n$, by Lemma \ref{L:fasthit} with $k= 2 \log n$ and $M =  \lceil 3 (\log n)^2 \rceil + 1$ together with the facts that $\zeta^{-1}\leq \log n$ and 
\begin{equation*}
    10 M (2 \log n + \log(\zeta^3 n)) \zeta^{-2} \leq (\log n)^6,
\end{equation*}
we have
\begin{align*}
	\widehat{\P}\Big( d_{\cT}\big(v_0, \ClCp{1}{p_m}\big)\geq (\log n)^6 
	%\luca{30} M (\log n)^3
	\Big) 
	%        &\leq
	%        \E \big[ Q_{v_0}^\omega \Big( |{\rm Ran}(X)| >  30 M (\log n)^3 \Big) \big] \\
	&\leq  \E \Big[ Q_{v_0}^\omega \Big( |{\rm Ran}(X)| >  10 M (2 \log n + \log(\zeta^3 n)) \zeta^{-2} \Big) \Big]\\
	&\leq \big( 1 - \frac{1}{\log n} \big)^{3 (\log n)^2} + 3\e^{-2\log n} + 4 \e^{-n^{1/4}} \\
	&= \bigO{\frac{1}{n^2}}. 
\end{align*} 
\end{proof}

\subsection{Proof of Proposition \ref{P:DiaC1}} \label{SS:ProofHigh}

We now show how Lemma \ref{L:sizepm} and Lemma \ref{L:fasthit} can be combined to prove Proposition \ref{P:DiaC1}.

\begin{proof}[Proof of Proposition \ref{P:DiaC1}]
Recall the sequence    $(p_i)_{0 \leq i \leq m}$     from \eqref{eq:g_seq}. The lower bounds on the diameter of $\cT$ in \eqref{eq:E_DiaC1} and \eqref{eq:P_DiaC1} hold trivially.   We proceed to prove the upper bounds.

By Lemma \ref{L:sizepm}, we obtain that
\begin{align} \label{eq:mainproofeq}
	\widehat{\E}[ {\rm diam}(\cT)] &\leq \widehat{\E} \big[ {\rm diam} \big( \ClCp{1}{p_m} \big) \big] +  2\widehat{\E} \big[ \max_{v \in V_n} d_\cT \big( v,\ClCp{1}{p_m} \big) \big] \nonumber \\
	&\leq \widehat{\E} \big[ {\rm diam} \big( \ClCp{1}{p_0} \big) \big] +  2\widehat{\E}\big[ \max_{v \in V_n} d_\cT \big( v,\ClCp{1}{p_m} \big) \big] + \bigO{\frac{n^{1/3}}{\sqrt{\eps n^{1/3}}}}.
\end{align}
%Fix $M = \lceil 3( \log n)^2 \rceil + 1$ and consider the following event
Consider the event
\begin{equation}
	\text{Bad} := \big\{ \exists v \in V_n \text{ such that } d_\cT(v,\ClCp{1}{p_m}) \geq (\log n)^6
	%\luca{30} M (\log n)^3 
	\big\}.
\end{equation}
As the length of the longest acyclic path can be trivially bounded by $n$, we have
\begin{equation}
	\widehat{\E} \big[ \max_{v \in V_n} d_\cT \big( v,\ClCp{1}{p_m} \big) \big] \leq (\log n)^6%\luca{30} M (\log n)^3 
	+ n \widehat{\P}(\text{Bad}). \label{eq:diamwithbad}
\end{equation}
By a union bound and Lemma \ref{L:fasthitLERW} (which holds uniformly for $v \in V_n$)
\begin{equation}
	\widehat{\P}(\text{Bad}) \leq n \widehat{\P}\Big(d_\cT \big( v_0,\ClCp{1}{p_m} \big) \geq (\log n)^6
	%\luca{30} M (\log n)^3
	\Big) 
	= \bigO{\frac{1}{n}}. \label{eq:Bad_bound}
\end{equation}
Inserting this bound into \eqref{eq:diamwithbad} and subsequently \eqref{eq:mainproofeq} completes the proof of the equation \eqref{eq:E_DiaC1}. We are left to show that the upper bound in \eqref{eq:P_DiaC1} holds with probability high enough.

Consider the event
\begin{equation*}
	\mathcal{B} = F \cap \{ i^* = 0 \} \cap  \text{Bad}^c,
\end{equation*}
where $i^*$ and $F$ are those of \eqref{eq:i*} and \eqref{eq:F_subset} in the proof of Lemma \ref{L:sizepm}. On the event $\mathcal{B}$, we have as in the argument around \eqref{eq:goodDif} that
\begin{align*}
	{\rm diam}(\cT) &\leq {\rm diam} \big( \ClCp{1}{p_m} \big) +  2 \max_{v \in V_n} d_\cT \big( v,\ClCp{1}{p_m} \big) \\
	&\leq {\rm diam} \big( \ClCp{1}{p_0} \big) + \bigO{\frac{n^{1/3}}{\sqrt{\eps n^{1/3}}}} +O((\log n)^6),
\end{align*}
so it suffices to show that $\P(\mathcal{B}^c)$ is small.

Using the probability bounds on $F^c$ , $\{i^*=i\}$ and $\text{Bad}$ from \eqref{eq:F_subset_bound}, \eqref{eq:istar} and \eqref{eq:Bad_bound}, respectively, we have     
\begin{align*}
	\widehat{\P}(\mathcal{B}^c) &\leq  \widehat{\P}(F^c) + \sum_{i=1}^m \P(i^* = i) + \widehat{\P}(\text{Bad}) \\
	&\leq n^6 \e^{-\frac{g_0 \log n }{10}} + \bigO{ \exp \Big(-\frac{1}{4} \sqrt{g_0 \eps n^{1/3}} \Big)} + \bigO{ \frac{1}{n} },
\end{align*}
which for large enough $g_0$ satisfies the probability bound of Proposition \ref{P:DiaC1}.
\end{proof}

\subsection{Proof of Theorem \ref{T:highDisorder}} \label{SS:Thm_high}The proof of Theorem \ref{T:highDisorder} uses Proposition \ref{P:DiaC1} with $\eps = n^{-1/3}$. In this case, $p_0$ is in the critical window and the largest component is ``almost'' a tree, typically having a bounded number of excess edges (recall that the excess of a graph $G = (V,E)$ was defined as ${\rm Exc }(G) := |E| - |V|$). It follows that every spanning tree of $\mathcal{C}_1(p_0)$ has essentially the same diameter as that of $\mathcal{C}_1(p_0)$ itself, which is of order $n^{1/3}$. Before giving the proof details, we prove the following auxiliary lemma.

\begin{lemma} \label{L:ratiodT1T2}
Let $T_1, T_2$ be two spanning trees of a connected graph $G$. If the edge sets $E(T_1)$ of $T_1$ and $E(T_2)$ of $T_2$ satisfy $|E(T_2) \setminus E(T_1)| = k$, then
\begin{equation}
	\frac{{\rm diam}(T_2)}{k+1} - 1 \leq {\rm diam}(T_1) \leq (k+1) {\rm diam}(T_2) + k  \,. \label{eq:ratio_dT1T2}
\end{equation}
\end{lemma}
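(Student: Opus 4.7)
Since $T_1$ and $T_2$ are both spanning trees of the same graph $G$, they have the same number of edges $|V(G)|-1$, so the hypothesis $|E(T_2) \setminus E(T_1)| = k$ forces $|E(T_1) \setminus E(T_2)| = k$ as well. The proof will be entirely combinatorial, based on the following simple observation: a sub-path of the unique $T_i$-path between two vertices is itself the unique $T_i$-path between its endpoints.

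For the upper bound on $\operatorname{diam}(T_1)$, pick $u,v$ realizing the diameter of $T_1$ and look at the unique path $\gamma = \gamma_{T_1}(u,v)$. Along $\gamma$, mark every edge that belongs to $E(T_1)\setminus E(T_2)$; there are at most $k$ such edges, so they decompose $\gamma$ into at most $k+1$ (possibly empty) maximal sub-paths $\sigma_1,\dots,\sigma_{k+1}$ whose edges all lie in $E(T_1) \cap E(T_2)$. Each $\sigma_j$ is in particular a path in $T_2$ between its endpoints, and since $T_2$ is a tree it is \emph{the} unique such path; hence its length is at most $\operatorname{diam}(T_2)$. Adding up the $k$ marked edges gives
\begin{equation*}
    \operatorname{diam}(T_1) = \operatorname{length}(\gamma) \leq (k+1)\operatorname{diam}(T_2) + k,
\end{equation*}
which is the upper bound in \eqref{eq:ratio_dT1T2}.

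For the lower bound, the exact same argument with the roles of $T_1$ and $T_2$ exchanged (using $|E(T_2)\setminus E(T_1)| = k$) yields $\operatorname{diam}(T_2) \leq (k+1)\operatorname{diam}(T_1) + k$. Rearranging,
\begin{equation*}
    \operatorname{diam}(T_1) \geq \frac{\operatorname{diam}(T_2) - k}{k+1} = \frac{\operatorname{diam}(T_2)}{k+1} - \frac{k}{k+1} \geq \frac{\operatorname{diam}(T_2)}{k+1} - 1,
\end{equation*}
completing the proof. I do not anticipate any substantive obstacle: the only point to be slightly careful about is verifying that a sub-path of a geodesic in a tree is itself a geodesic between its endpoints, which is immediate from the uniqueness of paths in a tree. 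No probabilistic input, electrical network estimate, or Wilson-algorithm machinery is needed.
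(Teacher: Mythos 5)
Your proof is correct and rests on the same combinatorial idea as the paper's: split a diameter-realizing path of $T_1$ at the (at most $k$) edges not belonging to $T_2$, bound each of the resulting $\leq k+1$ sub-paths, and then appeal to the symmetry $|E(T_1)\setminus E(T_2)| = |E(T_2)\setminus E(T_1)| = k$. The only cosmetic difference is that you bound each sub-path's length directly by $\operatorname{diam}(T_2)$ (since it is the unique $T_2$-path between its endpoints), whereas the paper introduces the intermediate quantity $d_F$, the maximal component diameter of the forest $F = T_2 \setminus (E(T_2)\setminus E(T_1))$, and sandwiches both $\operatorname{diam}(T_1)$ and $\operatorname{diam}(T_2)$ between $d_F$ and $(k+1)d_F + k$.
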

\begin{proof}
Note that $|E(T_1)| = |E(T_2)|$. We can construct $T_1$ from $T_2$ by first removing the $k$ edges in $E(T_2)\backslash E(T_1)$ from $T_2$ to obtain some forest $F \subseteq G$ with $(k+1)$ connected components, and then adding $k$ edges to $F$ to obtain $T_1$. Let $d_F$ be the maximum diameter among all connected components of $F$. It follows that
\begin{align*}
	d_F \leq {\rm diam}(T_1) &\leq (k+1) d_F + k\,, \\
	d_F \leq {\rm diam}(T_2) &\leq (k+1) d_F + k\,.
\end{align*}
Rearranging the above inequalities readily gives \eqref{eq:ratio_dT1T2}.
\end{proof}

In order to bound $\widehat{\P}( {\rm diam}(\cT) \geq C^{-1} n^{1/3})$ in Theorem \ref{T:highDisorder}, we will lower bound ${\rm diam}(\cT)$ by ${\rm diam}(\cT_{\cC_1(p_0)})$. The tree $\cT_{\cC_1(p_0)}$ is contained in a connected component of $G_{n, p_1}$, and the induced subgraph $H$ of $G_{n, p_1}$ with the same vertex set as $\cT_{\cC_1(p_0)}$ is a graph with small excess.  On $H$, we can construct a spanning tree $T$ with diameter that is bounded from below by the diameter of $\cC_1(p_0)$, which is known to be of order $n^{1/3}$ with high probability. We can then apply Lemma \ref{L:ratiodT1T2} to $\cT_{\cC_1(p_0)}$ and $T$ to obtain a lower bound on the diameter of $\cT_{\cC_1(p_0)}$.

\begin{proof}[Proof of Theorem \ref{T:highDisorder}]
Let $\beta_n \geq n^{4/3} \log n$, $\eps = n^{-1/3}$ and fix $\delta > 0$ as in the statement of Theorem \ref{T:highDisorder}. Consider the constant $g_0$ from Proposition \ref{P:DiaC1} with $p_0 = (1 + g_0 \eps)/n$. As $\beta_n \eps \geq n \log n$, Proposition \ref{P:DiaC1} and Remark \ref{R:Critical-pm} give that 
\begin{equation*}
	\widehat{\E}[{\rm diam}(\cT)] = O(n^{1/3})\,.
\end{equation*}
% The statement of Prop \ref{P:DiaC1} is about C_1(p_0) while the  remark \eqref{eq:Critical-pm} is about C_1(p_m). Slight ambiguity here - it follows by diam(C_1(p_0)) \leq diam(C_1(p_m)).
By Markov's inequality, for large enough $C_1 = C_1(\delta)$ one immediately has
\begin{equation*}
	\widehat{\P} \big( {\rm diam}(\cT) > C_1 n^{1/3} \big) \leq \frac{\delta}{2}\,.
\end{equation*}
To prove Theorem \ref{T:highDisorder}, it thus suffices to show that
\begin{equation}
	\widehat{\P} \big( {\rm diam}(\cT) < C_1^{-1} n^{1/3} \big) 
	\leq \frac{\delta}{2}\,, \label{eq:HighDis_lower_bound}
\end{equation}
as this also implies that $\widehat{\E}[{\rm diam}(\cT)] \geq C n^{1/3}$ for some other constant $C > 0$.

Recall that the excess of a graph was defined as ${\rm Exc }(G) = |E(G)| - |V(G)|$ and that we defined $p_1 = (1 + (5/4)^{1/2}g_0 \eps)/n$. Consider the following events:
\begin{align*}
	D &:= \big\{ {\rm diam}(\mathcal{C}_1(p_0)) \geq A^{-1} n^{1/3} \big\}, \\
	F(0) &:= \big\{ \ClCp{1}{p_0} \subseteq G_{n, p_1} \big\},  \\
	C(0) & := \big\{ \mathcal{C}_1(p_0) \subseteq \mathcal{C}_1(p_1) \big\}, \\
	Q &:= \big\{  {\rm Exc }(\mathcal{C}_1(p_1)) \leq 200 g_0^3 \big\},
\end{align*}
where $A=A(\delta/4, g_0)$ is the constant appearing in Theorem \ref{T:diamCriticalp}. We have the following probability bounds:
\begin{center}
	\begin{tabular}{ l r }
		$\P(D^c) \leq \frac{\delta}{4}$ & (by Theorem \ref{T:diamCriticalp}), \\
		$\widehat{\P}(F(0)^c) \leq n^5 \exp\Big(- \frac{g_0 \log n}{10}\Big)$ & (by Lemma \ref{L:gap}), \\ 
		$\widehat{\P}(C(0)^c) \leq 2 \exp\Big(- \frac{1}{4} \sqrt{g_0}\Big)$ & (by Lemma \ref{L:good}), \\ 
		$\P(Q^c) \leq 2 \e^{-g_0}$ & (by Theorem \ref{T:sizeSup}). \\ 
	\end{tabular}
\end{center}
%    Further, as the following inclusion holds
%    \begin{equation*}
	%        \big\{ | \mathcal{C}_1(p_0) | \geq \frac{3}{2} g_0 n^{2/3} \big\} \cap \big\{ | \mathcal{C}_2(p_1)| < \frac{3}{2} g_0 n^{2/3} \big\} \subseteq C(0)\,,
	%    \end{equation*}
%    we get by Theorem \ref{T:sizeSup} and Theorem \ref{T:size2ndSup}, with $k = g_0$ (and by possibly enlarging $g_0$), that
%    \begin{equation} \label{eq:prob_p0_sub_p1}
	%        \P(C(0)) \geq 1 - 7 e^{-g_0}.
	%    \end{equation}
% essentially it is enough if g_0 \geq 12...?
Thus for large enough $g_0 = g_0(\delta)$, we obtain
\begin{equation} \label{eq:combine_event_bound}
	\widehat{\P} \Big(D \cap F(0) \cap C(0) \cap Q \Big) \geq 1 - \frac{\delta}{2},
\end{equation}
and for the remainder of the proof we will restrict to all of these events.

Denote by $H$ the subgraph of $G_{n, p_1}$ induced by the vertices of $\ClCp{1}{p_0}$, i.e.\ $H$ contains all the vertices in $\ClCp{1}{p_0}$ and all $p_1$-open edges whose endpoints are both in $\ClCp{1}{p_0}$. On the event $F(0) \cap C(0)$ we have that 
\begin{gather*}
	\ClCp{1}{p_0} \subseteq H \subseteq \mathcal{C}_1(p_1),
\end{gather*}
and furthermore, on the event $Q$ we also see that
\begin{equation}
	{\rm Exc}(H) \leq {\rm Exc}(\mathcal{C}_1(p_1)) \leq 200 g_0^3. \label{eq:excessH}
\end{equation}

Let $T^0$ be any spanning tree of $\mathcal{C}_1(p_0)$ and (arbitrarily) extend $T^0$ to a spanning tree $T$ on $H$. On the event $D$ we have the inequalities
\begin{equation*}
	{\rm diam}(T) 
	\geq {\rm diam}(T^0) 
	\geq {\rm diam}(\cC_1(p_0)) \geq A^{-1} n^{1/3}.
\end{equation*}
Both $T$ and $\ClCp{1}{p_0}$ are spanning trees of $H$ so that both trees select exactly $|V(H)| - 1$ edges from a choice of $|E(H)| = {\rm Exc}(H) + |V(H)|$ edges. In particular, we have
\begin{equation*}
	\big| E(T) \setminus E \big( \ClCp{1}{p_0} \big) \big| \leq {\rm Exc}(H) + 1,
\end{equation*}
i.e.\ $T$ and $\ClCp{1}{p_0}$ have (on the event $Q$) many edges in common. By \eqref{eq:excessH} and Lemma \ref{L:ratiodT1T2} we obtain that
\begin{equation*}
	{\rm diam}(\cT) \geq {\rm diam}\big( \ClCp{1}{p_0} \big) \geq \frac{n^{1/3}}{A ( 200g_0^3 + 2) } -1 \geq C_1^{-1}n^{1/3}.
\end{equation*}
Provided that $C_1$ is large enough, this together with \eqref{eq:combine_event_bound} proves \eqref{eq:HighDis_lower_bound} 
\end{proof}

\smallskip

%In the following remark, we briefly discuss a few of the differences between the proofs of Proposition \ref{P:DiaC1} and Theorem \ref{T:highDisorder} compared to the proof of Theorem 1 in \cite{ABR09}.  
% \lu?ca{Maybe elaborate next part a bit more....?}
\begin{remark} \label{R:MSTdiff}
Our proof of Proposition \ref{P:DiaC1} adapts arguments from the proof of \cite[Theorem 1]{ABR09} on the diameter of the minimum spanning tree. In particular, we follow their strategy of controlling the diameter of the spanning tree on the whole graph $(K_n, \weight_n)$ in terms of the diameter of the spanning tree restricted to the largest component of a critical Erdős-R\'enyi random graph coupled to the random environment $\omega$. However, there are some key differences between the MST, which corresponds to $\beta=\infty$, and our case with $n^{4/3} \leq \beta_n<\infty$: 
\begin{enumerate}
	\item When $\omega$ is coupled to $G_{n, p_i}$, % with $p_i= (1+ g_i n^{-1/3})/n$, 
	the minimum spanning tree $\cT^{\min}$ on $(K_n, \omega)$ trivially satisfies the property that  $\cT_{\cC_1(p_i)}^{\min}$ (see definition in \eqref{eq:defCbar}) is contained in $\cC_1(p_i)$. This does not hold for the RSTRE with $\beta_n<\infty$, which motivates the definition of $\cT_{\cC_1(p_i)}$. The key ingredient of our analysis is Lemma \ref{L:gap}, which implies that, with high probability, $\cT_{\cC_1(p_i)} \subseteq \cC_1(p_{i+1})$ instead of $\cT_{\cC_1(p_i)} \subseteq \cC_1(p_i)$. Further work needs to be done to compare the diameter of $\cT_{\cC_1(p_i)}$ with that of $\cT_{\cC_1(p_{i+1})}$ that is specific to our setting.

	\item The minimum spanning tree $\cT^{\min}$ on $(K_n, \omega)$ can be constructed through a greedy algorithm (Prim's algorithm) that only uses local information as we grow the tree.    	
	For the RSTRE on $(K_n, \weight_n)$, we apply Wilson's algorithm, which uses global information of the weighted graph due to the loop erasure: the LERW depends on more than just the random environment observed along the LERW. 
	This complication arises in particular in the proof of Lemma \ref{L:fasthit} and Lemma \ref{L:fasthitLERW}. We deal with this by working under the averaged measure $\widehat{\E}$ and conditioning on previously gained information of the random walk.
	
	\item 
	In the analogue of Proposition \ref{P:DiaC1} in \cite{ABR09} the authors only consider the starting parameter $p_0$ inside the critical window, which corresponds in our case to $\beta_n\geq n^{4/3}\log n$. When $\beta_n<n^{4/3}\log n$, we are forced to choose $p_0$ slightly above the critical window: this led to  Conjecture \ref{C:Intermediate}, see Section \ref{S:Conclusion}.  
\end{enumerate}
\end{remark}

\subsection{Very high disorder} \label{SS:veryHigh}

We end this section by providing an alternative approach to proving Theorem \ref{T:main} in the high disorder regime when $\beta_n$ grows fast enough. Namely, for any graph sequence $G_n$, we will show that for large enough $\beta_n$ the law of the RSTRE concentrates on a single tree, the MST. In particular, when $G_n=K_n$ are complete graphs, by the results of \cite{ABR09}, the MST has a diameter of order $n^{1/3}$. %We state this in the following theorem. % "this" refers to the complete graph or all graphs?% \mic{\sout{where we remark that the result} which} is essentially a reformulation of \cite[Proposition 6.1]{MSS23}. \mic{I DON'T AGREE WITH THE LAST SENTENCE. HERE THERE ARE MORE GENERAL GRAPHS AND THERE IS DEPENDENCE OF THE WEIGHTS ON $n$. I WOULD DELETE THE SENTENCE.}
\begin{theorem} \label{T:SuperHighDisorder}
Let $G_n = (V_n, E_n)$ be a sequence of connected graphs, with $|V_n| = n$ and $|E_n| = m(n) = m$. Assign to each edge a weight $\weight_n(e) = \exp(-\beta_n \omega_e)$, where $(\omega_e)_{e \in E_n}$ are i.i.d.\ uniform r.v.'s on $[0,1]$. Let $\cT^{\min}(n)$ be the spanning tree that  minimizes $H(\cT, \omega):=\sum_{e\in \cT}\omega_e$ in $\mathbb{T}_{G_n}$, the set of all spanning trees of $G_n$.  If $\beta_n \gg m^2 n \log n$, then %\blue{with high $\widehat{\P}$-probability} the RSTRE concentrates on $T^*$, that is HAVE TO MODIFY THE PROBABILITY PROPERLY
\begin{align*}
	\widehat{\P} \Big(\cT_{n,\beta_n}^\omega=\cT^{\min}(n) \Big)\xrightarrow{n\to\infty}1\,.
\end{align*}

\end{theorem}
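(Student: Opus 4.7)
The plan is to show that, under the hypothesis $\beta_n \gg m^2 n \log n$, the Gibbs measure concentrates overwhelmingly on the unique tree $\cT^{\min}(n)$. First I would note that, since $\bP^\omega_{n,\beta_n}(\cT = \cT^{\min}) \leq 1$,
\begin{equation*}
\widehat{\P}\big(\cT \neq \cT^{\min}(n)\big) \leq \E\bigg[\sum_{T \neq \cT^{\min}} \e^{-\beta_n (H(T,\omega) - H(\cT^{\min},\omega))}\bigg],
\end{equation*}
so it suffices to control the minimal energy gap $\Delta(\omega) := \min_{T \neq \cT^{\min}}(H(T,\omega) - H(\cT^{\min},\omega))$ together with the number $|\mathbb{T}_{G_n}|$ of spanning trees.

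Second, I would bound $\Delta(\omega)$ below in terms of the minimum pairwise gap among the disorder variables. The key ingredient is the \emph{symmetric basis exchange theorem} for matroids (Brualdi/Greene): for any spanning tree $T \neq \cT^{\min}$ there exists a bijection $\sigma : \cT^{\min} \setminus T \to T \setminus \cT^{\min}$ such that $\cT^{\min} - e + \sigma(e)$ is a spanning tree for every $e \in \cT^{\min} \setminus T$. Since the $\omega_e$ are $\P$-a.s.\ distinct, $\cT^{\min}$ is the unique minimizer of $H(\cdot,\omega)$, so each such single swap strictly increases the Hamiltonian, i.e.\ $\omega_{\sigma(e)} > \omega_e$. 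Consequently,
\begin{equation*}
H(T,\omega) - H(\cT^{\min},\omega) = \sum_{e \in \cT^{\min} \setminus T}\big(\omega_{\sigma(e)} - \omega_e\big) \geq \min_{e \neq f \in E_n}|\omega_e - \omega_f| =: \Delta^*(\omega).
\end{equation*}
For $m$ i.i.d.\ $\mathrm{Unif}[0,1]$ variables, a standard union bound over pairs gives $\P(\Delta^* < t) \leq 2\binom{m}{2} t \leq m^2 t$.

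Third, I would combine these ingredients with the trivial count $|\mathbb{T}_{G_n}| \leq \binom{m}{n-1} \leq m^{n-1}$, hence $\log|\mathbb{T}_{G_n}| \leq n \log m \leq 2n \log n$ (using $m \leq \binom{n}{2}$). Splitting according to whether $\Delta^* \geq t_n$ or not,
\begin{equation*}
\widehat{\P}\big(\cT \neq \cT^{\min}\big) \leq \P(\Delta^* < t_n) + |\mathbb{T}_{G_n}|\, \e^{-\beta_n t_n} \leq m^2 t_n + \exp\big(2n \log n - \beta_n t_n\big).
\end{equation*}
Choosing the geometric-mean threshold $t_n := \sqrt{n \log n/(m^2 \beta_n)}$, the first summand equals $\sqrt{m^2 n \log n/\beta_n}$ while the exponent in the second becomes $2n \log n - \sqrt{\beta_n n \log n / m^2}$; both tend to zero precisely under the hypothesis $\beta_n \gg m^2 n \log n$.

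The main (and only nontrivial) obstacle is the matroid exchange step, which reduces the global comparison between arbitrary $T$ and $\cT^{\min}$ to single-edge swaps; the remaining estimates are merely a union bound over pairs of edges and a trivial count of spanning trees. A pleasant feature of this argument is that it uses no geometric property of $G_n$, only the matroid structure of spanning trees and the continuity of the disorder distribution.
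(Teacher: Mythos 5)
Your argument is correct and lands on exactly the same threshold, but it packages the key comparison step somewhat differently from the paper, so a brief comparison is warranted. The paper's (sketched) proof singles out the minimum spanning tree $T_1=\cT^{\min}$ and the \emph{second}-cheapest spanning tree $T_2$, bounds $\bP^\omega_{n,\beta_n}(\cT\neq T_1)\leq |\mathbb{T}_{G_n}|\,\weight_n(T_2)/\weight_n(T_1)$, asserts that $T_2$ differs from $T_1$ by a single edge (so that $H(T_2)-H(T_1)\geq \min_{e\neq e'}|\omega_e-\omega_{e'}|$), and then combines Cayley's bound $|\mathbb{T}_{G_n}|\leq n^{n-2}$ with the anti-concentration estimate that the minimum pairwise gap among $m$ i.i.d.\ uniforms is typically of order $1/m^2$. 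You instead invoke Brualdi's bijective basis-exchange theorem to bound $H(T)-H(\cT^{\min})\geq\Delta^*$ uniformly over \emph{all} competitors $T\neq\cT^{\min}$, which bypasses the need to introduce $T_2$ and to justify the one-edge-difference claim (a fact that itself ultimately rests on the same exchange property of the graphic matroid). Your crude count $|\mathbb{T}_{G_n}|\leq m^{n-1}$ plays the same role as Cayley's bound, since both give $\log|\mathbb{T}_{G_n}|=O(n\log n)$, and your explicit threshold $t_n=\sqrt{n\log n/(m^2\beta_n)}$ makes the derivation of the condition $\beta_n\gg m^2 n\log n$ fully transparent. In short: same three ingredients (tree count, minimum disorder gap, matroid exchange), same conclusion, but your version is self-contained and makes the role of the matroid structure explicit, whereas the paper defers the corresponding details to its citation of \cite[Proposition 6.1]{MSS23}.
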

\begin{proof}[Proof Sketch]
We follow the same proof steps as those of \cite[Proposition 6.1]{MSS23}. Order the spanning trees of $G_n$ by their weights $\weight_n(T)=\prod_{e \in T} \weight_n(e)$ and consider $T_1 =\cT^{\min}(n)$ and $T_2$, the two trees with the largest weights (i.e.\ smallest $H(T, \omega)$). It suffices to prove that if $\beta_n$ is large enough, then
\begin{equation*} 
	\big| \mathbb T_{G_n} \big|\frac{ \weight_n(T_2)}{\weight_n(T_1)}  = \big|\mathbb T_{G_n} \big| \exp\Big( -\beta_n \big( \sum_{e \in T_2} \omega_e - \sum_{e \in T_1} \omega_e \big) \Big)  \xrightarrow{n \rightarrow \infty} 0 \, ,
\end{equation*}
where $\big| \mathbb T_{G_n} \big|$ denotes the cardinality of the set of all possible spanning trees of $G_n$.
As $\big| \mathbb T_{G_n} \big|\leq n^{n-2}$ for any simple graph with $n$ vertices (Cayley's theorem), and since $T_1$ and $T_2$ differ by exactly one edge, we have that
\begin{equation} \label{eq:T1vsT2}
	\big|\mathbb T_{G_n} \big| \exp\Big( -\beta_n \Big( \sum_{e \in T_2} \omega_e - \sum_{e \in T_1} \omega_e \Big) \Big) 
	\leq \exp\Big((n-2) \log n-\beta_n \min\limits_{\substack{e,e' \in E_n \\ e\neq e'}} |\omega_e - \omega_{e'}| \Big)\,.
\end{equation}
The minimum difference between $k$ independent uniform $[0,1]$-valued random variables is typically of order $1/k^2$ (see the proof of \cite[Proposition 6.1]{MSS23}), so that if $\beta_n \gg m^2 n \log n$, then \eqref{eq:T1vsT2} indeed goes to zero.
\end{proof}

%%%%%%%%%%%%%%%%%%%%%%%%%%%%%%%%%%%%%%%%%%%%%%%%%%%%%%%%%%%%
%%%%%%%%%%%%%%%%%%%% NEW SECTION %%%%%%%%%%%%%%%%%%%%
%%%%%%%%%%%%%%%%%%%%%%%%%%%%%%%%%%%%%%%%%%%%%%%%%%%%%%%%%%%%

%\clearpage

\section{Heuristics for Conjecture \ref{C:Intermediate}} \label{S:Conclusion}

We provide here some heuristic justification for Conjecture \ref{C:Intermediate} on the diameter of the RSTRE in the intermediate disorder regime $\beta_n = n^{1 + \gamma}$ for $0 < \gamma < 1/3$, and refer to \cite{Mak25} for a more detailed explanation. As we have seen in Proposition \ref{P:DiaC1}, %\mic{the order of the diameter of the random spanning tree $\cT$ is given by} the diameter of the subtree connecting vertices in a (slightly) supercritical largest component. 
\begin{equation*}
{\rm diam}(\cT) \approx {\rm diam}\big( \ClCp{1}{p_0} \big)
\end{equation*}
for $p_0=   \tfrac{1}{n}+ \tfrac{c \log n}{\beta_n}$. 
%Suppose $p= \frac{1}{n} + \frac{C \log n}{\beta_n}$ for some large constant $C > 0$, and 
%Let $\cT_p$ \mic{MAYBE $\cT(\mathcal C_1(p))$?} be the RSTRE \mic{on the graph} \luca{$G_p$ consisting of the vertices $\mathcal{C}_1(p)$ and the edges that have $\omega_e \leq p$.}. One can show that
%\begin{equation}
%    {\rm diam}(\cT) = \Theta \Big( {\rm diam}( \cT_p ) \Big).
%\end{equation}
Using Lemma \ref{L:gap}, we may therefore restrict ourselves to studying the RSTRE on $\cC_1(p)$ for $p>p_0$ in the slightly supercritical window.

\smallskip

%The largest component $\mathcal{C}_1(p)$ has a well-understood structure that can be used to reduce the problem to a smaller well-connected graph. 
For simplicity assume that $\beta_n = n^{1 + \gamma}$ with $1/4 < \gamma <1/3$, and let $\eps = \tfrac{C n\log n}{\beta_n}$ and $p = (1+\eps)/n$. In \cite{DKLP11}, the authors construct a graph $\tilde{\cC}_1(p)$ that is \textit{contiguous} with $\cC_1(p)$, that is, every graph property that holds with high probability for $\tilde{\cC}_1(p)$ also holds with high probability for $\mathcal C_1(p)$. The construction of $\tilde{\mathcal C}_1(p)$ consists of three steps:
\begin{enumerate}
\item Sample a random 3-regular graph with $\Theta(\eps^3 n)$ vertices to generate the so called \textit{kernel graph} $K$; \label{enu:kernel}
\item Replace each edge of $K$ independently by a path with length following the geometric distribution Geom$(\eps)$, to obtain the so called \textit{2-core} $H$; \label{enu:2core}
\item Attach to each vertex of $H$ an independent Galton-Watson tree with offspring distribution following the Poisson distribution Poisson$(1-\eps)$. \label{enu:GWtree}
\end{enumerate}
The attached trees in step (\ref{enu:GWtree}) can only increase the diameter of the random spanning tree $\cT$ on $\tilde{\mathcal{C}}_1(p)$ by $O(\log n)$, so that we may safely ignore them. To each edge in the 2-core $H$ we assign weights of the form $\weight_n(e) = \exp(-\beta_n p \, \tilde\omega_e)$, where $\tilde\omega_e$ are i.i.d.\ uniform on $[0,1]$, corresponding to the original weights conditioned on the events $\{\omega_e\leq p\}$. Using the series law, this also induces i.i.d.\ weights on the edges of the kernel graph $K$. We can couple the RSTREs on $H$ and $K$ by keeping an edge in $K$ if and only if the whole corresponding path in $H$ is contained in the RSTRE. The diameter of the RSTRE on $\tilde{\mathcal{C}}_1(p)$ can then be obtained %(up to logarithmic factors and small additive terms) 
by multiplying the diameter of the RSTRE on $K$ by $\eps^{-1}$. Hence, if we denote by $\cT^H_p$ and $\cT^K_p$ the RSTRE's on $H$ and $K$, respectively, then
\begin{equation}
{\rm diam}(\cT) \approx  {\rm diam}(\cT^H_p) \approx \frac{\beta_n}{n}{\rm diam}(\cT^K_p),
\end{equation} 
where $\approx$ ignores factors of order $(\log n)^c$ for $c > 0$.

\smallskip

In \cite{MSS23}, it was shown that for bounded degree expanders $G$, with edge weights $\weight(e)$ whose distribution does not change with $G$, %with fixed weight distributions, 
the RSTRE has a diameter of order $\sqrt{|V(G)|}$ with high probability. The kernel graph $K$ satisfies most of the conditions of \cite[Theorem 1.1]{MSS23}, except that the weight distribution now depends on $n$. However, the law of these edge weights are more concentrated because the resistance on each edge of the kernel graph $K$ is the sum of ${\rm Geom}(\eps)$ i.i.d.\ random variables of the form $\exp( \beta_n p \, \tilde \omega_e)$. It is then conceivable that the result of \cite{MSS23} still holds for such weighted kernel graphs. Since the size of the kernel graph $K$ %We hope that reducing to the kernel graph makes the edge weights more concentrated so that a similar argument of \cite{MSS23} might be applicable.  If this were true, using that the size of the kernel graph $\mathcal{C}_1(p)$ 
is of order $n^4 (\log n)^3/ \beta_n^3$, this suggests that
\begin{equation}
{\rm diam}(\cT)  \approx \frac{\beta_n}{n} \sqrt{\frac{n^4 }{\beta_n^3}}  =  \frac{n}{\sqrt{\beta_n}} \, ,
\end{equation}
which is precisely the statement of Conjecture \ref{C:Intermediate} for the intermediate regime. %The difficulty (and the point where this heuristic may break down) in this approach lies in adapting the arguments of \cite{MSS23} to weight distributions where the ratio of two typical weights is of polynomial order instead of, say, logarithmic order. \blue{[[RONGFENG: remove previous sentence?]]}
For general $\beta_n = n^{1 + \gamma}$, with $0 < \gamma < 1/3$, the kernel graph $K$ is still a bounded degree (with degree depending on $\gamma$) expander; however, the degree sequence becomes more complicated \cite{DKLP11}.

\section*{Acknowledgements}
The authors would like to thank an anonymous referee for their careful reading and valuable comments. R.~Sun is supported by NUS Tier 1 grant A-8001448-00-00. 
	M.~Salvi is supported by the MUR Excellence Department Project MatMod@TOV awarded to the Department of Mathematics, University of Rome Tor Vergata, CUP E83C18000100006. He also acknowledges financial support from the MUR 2022 PRIN project GRAFIA, project code 202284Z9E4, and thanks the INdAM group GNAMPA.
\bibliographystyle{plain}
\bibliography{RSTRE}

\end{document}